\newtheorem{lemma}{Lemma}
\newtheorem{theorem}[lemma]{Theorem}
\newtheorem{definition}[lemma]{Definition}
\newtheorem{corollary}[lemma]{Corollary}
\newtheorem{proposition}[lemma]{Proposition}
\theoremstyle{plain}
\newtheorem{example}[lemma]{Example}
\newtheorem{question}[lemma]{Question}
\newcolumntype{C}{>{$}c<{$}}
\newcolumntype{L}{>{$}l<{$}}
\newcolumntype{R}{>{$}r<{$}}
\newcolumntype{B}{>{\collectcell\foobox}c<{\endcollectcell}}
\def\foobox#1{\hbox to 5mm{\hfil$#1$\hfil}}
\colorlet{cB}{blue!70!black}
\colorlet{cR}{red!70!black}
\let\phi=\varphi
\newcommand{\Z}{\mathbb{Z}}
\newcommand{\N}{\mathbb{N}}
\newcommand{\NN}{\N_0}  
\newcommand{\ST}{S} 
\newcommand{\STT}{Q} 
\newcommand{\ID}{\mathrm{id}}
\newcommand{\card}[1]{|#1|}
\newcommand{\eg}{e.g.\xspace}
\newcommand{\from}{:}
\newcommand{\itvl}[2]{[#1,#2)}
\newcommand{\itoinfty}[1]{\itvl{#1}{\infty}}
\newcommand{\minftytoi}[1]{(-\infty,#1)}
\newcommand{\mealy}{\mu}
\newcommand{\stairs}{\Psi}
\newcommand{\toleft}[1]{{#1}^{i-}}
\newcommand{\toright}[1]{{#1}^{i+}}
\newcommand{\arule}{slider}
\newcommand{\brule}{sweeper}
\newcommand{\SL}{\mathcal{L}}
\newcommand{\SR}{\mathcal{R}}
\newcommand{\CL}{\mathcal{L}^{cl}}
\newcommand{\CR}{\mathcal{R}^{cl}}
\colorlet{darkred}{red!70!black}
\newcommand{\loc}{f_\mathrm{loc}}
\newcommand{\ns}[2]{\left[ \begin{smallmatrix} #1 \\ #2 \end{smallmatrix} \right]}
\newcommand{\inst}[1]{\mbox{\textsuperscript{#1}}}
\title{Sequentializing cellular automata}
\author{Jarkko Kari\inst{1}\thanks{Research supported by the Academy of Finland grant 296018.}, \quad
  Ville Salo\inst{1}, \quad and 
  Thomas Worsch\inst{2}
  \\[5mm]
  \inst{1} University of Turku, Turku, Finland \\
  \inst{2} Karlsruhe Institute of Technology, Karlsruhe, Germany
}
\begin{document}

\maketitle

\begin{abstract}
  We study the problem of sequentializing a cellular automaton without
  introducing any intermediate states, and only performing reversible
  permutations on the tape. We give a decidable characterization of
  cellular automata which can be written as a single left-to-right
  sweep of a bijective rule from left to right over an infinite tape.
\end{abstract}

\section{Introduction}

Cellular automata (CA) are models of parallel computation, so when
implementing them on a sequential architecture, one
cannot simply update the cells one by one -- some cells would see
already updated states and the resulting configuration would be
incorrect. The simplest-to-implement solution is to hold two copies of
the current configuration in memory, and map
$(x,x) \mapsto (x,G(x)) \mapsto (G(x),G(x))$. This is wasteful in
terms of memory, and one can, with a bit of thinking, reduce the
memory usage to a constant by simply remembering a `wave' containing
the previous values of the $r$ cells to the left of the current cell,
where $r$ is the radius of the CA.

Here, we study the situation where the additional memory usage can be,
in a sense, dropped to zero -- more precisely we remember \emph{only}
the current configuration $x \in S^\Z$, and to apply the cellular automaton we
sweep a permutation $\chi : S^m \to S^m$ from left to right over $x$
(applying it consecutively to all length-$m$ subwords of $x$). The
positions where the sweep starts may get incorrect values, but after a
bounded number of steps, the rule should start writing the image of
the cellular automaton. We formalize this in two ways, with `sliders'
and `sweepers', which are two ways of formally dealing with the
problem that sweeps `start from infinity'.

It turns out that the cellular automata that admit a sliding rule are
precisely the ones that are left-closing
(Definition~\ref{def:LeftClosing}), and whose number of right stairs
(see Definition~\ref{def:stair}) of length $3m$ divides $|\ST|^{3m}$
for large enough $m$. This can be interpreted as saying that the
average movement `with respect to any prime number' is not to the
right. See Theorem~\ref{thm:SliderCharacterization2} and
Theorem~\ref{thm:SweeperCharacterization} for the precise statements,
and Section~\ref{sec:Decidability} for decidability results.

We introduce the sweeping hierarchy where left-to-right sweeps and
right-to-left sweeps alternate, and the closing hierarchy where
left-closing and right-closing CA alternate. We show that the two
hierarchies coincide starting from the second step. We do not know if
the hierarchies collapse on a finite level.

\subsection{Preliminaries}

We denote the set of integers by $\Z$.
For integers $i\leq j$ we write $[i,j)$ for
$\{ x\in\Z\mid i\leq x < j\}$ and $[i,j]$ for $[i,j)\cup\{j\}$;
furthermore $\itoinfty{i}=\{ x\in\Z \mid i\leq x\}$ and
$\minftytoi{i}=\{ x\in\Z \mid x< i\}$ have the obvious meaning.
Thus $\itoinfty{0}$ is the set of non-negative integers which is also
denoted by $\NN$.

Occasionally we use notation for a set $M$ of integers in a place
where a \emph{list} of integers is required.
If no order is specified we assume the natural increasing order.
If the reversed order is required we will write $M^R$.

For sets $A$ and $B$ the set of all functions $f\from A\to B$ is
denoted $B^A$.
For $f\in B^A$ and $M\subseteq A$ the restriction of $f$ to $M$ is
written as $f|_M$ or sometimes even $f_M$. Finite words $w\in\ST^n$
are lists of symbols, \eg mappings $w\from [0,n) \to \ST$. Number $n$
is the length of the word. The set of all finite words is denoted by
$\ST^*$.

Configurations of one-dimensional CA are biinfinite words
$x\from\Z\to\ST$.
Instead of $x(i)$ we often write $x_i$. We define the \emph{left
  shift} $\sigma \from S^\mathbb{Z} \to S^\mathbb{Z}$ by
$\sigma(x)_i = x_{i+1}$.
The restriction of $x$ to a subset $\minftytoi{i}$ gives a
left-infinite word for which we write $x_{\minftytoi{i}}$; for a
right-infinite word we write $x_{\itoinfty{i}}$. These are called
\emph{half-infinite words}.
Half-infinite words can also be shifted by $\sigma$, and this is
defined using the same formula. The domain is shifted accordingly so
for $x\in\ST^{[i,\infty)}$ we have $\sigma(x)\in\ST^{[i-1,\infty)}$.

We use a special convention for concatenating words: Finite words
`float', in the sense that they live in $\ST^n$ for some $n$, without
a fixed position, and $u \cdot v$ denotes the concatenation of $u$ and
$v$ as an element of $\ST^{|u| + |v|}$. Half-infinite configurations
have a fixed domain $(-\infty,i]$ or $[i,\infty)$ for some $i$, which
does not change when they are concatenated with finite words or other
half-infinite configurations, while finite words are shifted suitably
so that they fill the gaps exactly (and whenever we concatenate, we
make sure this makes sense).

More precisely, for $w \in \ST^*$ and $y \in \ST^{(-\infty, i]}$, we
have $y \cdot w \in \ST^{(-\infty,i+|w|]}$ and for $w \in \ST^*$ and
$z \in \ST^{[i,\infty)}$ we have $w \cdot z \in \ST^{[i-|w|,\infty)}$
(defined in the obvious way). For a word $w \in \ST^*$ and
half-infinite words $y \in \ST^{\minftytoi{i}}$ and
$z \in \ST^{\itoinfty{i+n}}$ we write $y \cdot w \cdot z$ for the
obvious configuration in $\ST^\Z$, and this is defined if and only if
$|w| = n$. 

The set $\ST^\Z$ of configurations is assigned the usual product
topology generated by cylinders. A \emph{cylinder} defined by word
$w\in\ST^n$ at position $i\in\Z$ is the set
\[
[w]_{[i,i+n)} = \{x\in\ST^\Z\ |\ x_{[i,i+n)}=w\}
\]
of configurations that contain word $w$ in position $[i,i+n)$.
Cylinders are open and closed, and the open sets in $\ST^\Z$ are
precisely the unions of cylinders. We extend the notation also to
half-infinite configurations, and define
$$
[y]_{D} = \{x\in\ST^\Z\ |\ x_{D}=y\}
$$
for $D = [i,\infty)$ and $D = (\infty, i]$, and any $y \in \ST^D$.
These sets are closed in the topology.

\section{Sliders and sweepers}

A \emph{block rule} is a function $\chi : \ST^m \to \ST^m$. Given a
block rule $\chi$ we want to define what it means to ``apply $\chi$
from left to right once at every position''.
We provide two alternatives, compare them and characterize which
cellular automata can be obtained by them.
The first alternative, called a \arule, assumes a bijective block rule
$\chi$ that one can slide along a configuration left-to-right or
right-to-left to transition between a configuration $y$ and its image
$f(y)$.
The second alternative, called a \brule, must consistently provide
values of the image $f(y)$ when sweeping left-to-right across $y$
starting sufficiently far on the left.

We first define what it means to apply a block rule on a configuration.

\begin{definition}
  Let $\chi : \ST^m \to \ST^m$ be a block rule and $i\in \Z$. The
  application of $\chi$ at coordinate $i$ is the function
  $\chi^i:\ST^\Z\longrightarrow\ST^\Z$ given by
  $\chi^i(x)_{[i,i+m)} = \chi(x_{[i,i+m)})$ and $\chi^i(x)_j=x_j$ for
  all $j\not\in[i,i+n)$. More generally, for $i_1,\ldots,i_k \in \Z$
  we write
  \[
    \chi^{i_1,\ldots,i_k} = \chi^{i_k} \circ \cdots \circ \chi^{i_2}
    \circ \chi^{i_1}.
  \]
\end{definition}
When $m > 1$, it is meaningless to speak about ``applying $\chi$ to each
cell simultaneously'':
An application of $\chi$ changes the states of several cells at once.
Applying it slightly shifted could change a certain cell again, but in a
different way.

We next define finite and infinite sweeps of block rule applications 
with a fixed start position.
\begin{definition}
  Given a block rule $\chi$ for $i,j\in \Z$, $i\leq j$, define
  $\chi^{[i,j]}=\chi^j\circ\cdots\circ \chi^i$; analogously let
  $\chi^{[i,j)}=\chi^{j-1}\circ\cdots\circ \chi^i$.
  For any configuration $x\in\ST^{\Z}$ and fixed $i\in\Z$ the sequence
  of configurations $x^{(j)}=\chi^{[i,j]}(x)$ for $j\in\itoinfty{i}$
  has a limit (point in the topological space $\ST^{\Z}$) for which we
  write $\toright{\chi}(x)$.

  Analogously, for a block rule $\xi$ the sequence of configurations
  $x^{(j)} = \xi^{[j,i)^R}(x)$ for $j\in \minftytoi{i}$ has a limit
  for which we write $\toleft{\xi}(x)$.
\end{definition}
It should be observed that in the definition of $\toright{\chi}(x)$
one has $i<j$ and the block rule is applied at successive positions
from left to right.
On the other hand $j<i$ is assumed in the definition of
$\toleft{\xi}(x)$ and since the ${}^{{}^R}$ in $\xi^{[j,i)^R}$ indicates
application of $\xi$ at the positions in the \emph{reverse} order,
i.e.\ $i-1, i-2, \dots, j$, the block rule is applied from right to left.

The reason the limits always exist in the definition is that the value
of $x^{(j)}_i$ changes at most $m$ times, on the steps where the sweep
passes over the cell $i$.

\begin{example}
  \label{ex:swap}
  Let $S=\{0,1\}$ and consider the block rule
  $\chi\from S^{[0,2)}\to S^{[0,2)}: (a,b)\mapsto (b,a)$.
  For consistency with the above definition denote by $\xi$ the
  inverse of $\chi$ (which in this case happens to be $\chi$ again).
  Let $s\in S$ and $y\in S^{\Z}$.
  We will look at the configuration $x\in S^{\Z}$ with
  \[
    x_i =
    \begin{cases}
      y_{i+1}, &\text{ if } i<0 \\
      s, &\text{ if } i=0       \\
      y_{i}, &\text{ if } i>0
    \end{cases}
  \]
  The application of $\chi$ successively at positions $0, 1, 2, \dots$
  always swaps state $s$ with its right neighbor.
  Since cell $j$ can only possibly change when $\chi^{j-1}$ or
  $\chi^j$ is applied, each cell enters a fixed state after a finite
  number of steps; see also the lower part of Figure~\ref{fig:ex-swap}
  starting at the row with configuration $x$.

  \begin{figure}[ht]
    \centering
    \small
    \begin{tabular}[b]{R@{ \ $\cdots$}|*{7}{B|}c@{$\cdots$\quad}C}
      \cline{2-8}
      \xi^{0-}(x) & y_{-3} & y_{-2} & y_{-1} & y_0 & y_1 & y_2 & y_3 & & y \\
      \cline{2-8}
      \multicolumn{1}{c}{} & \multicolumn{7}{C}{\vdots} & \\
      \cline{2-8}
      \xi^{[-3,0)^R}(x) & s & y_{-2} & y_{-1} & y_0 & y_1 & y_2 & y_3 & \\
      \cline{2-8}
      \xi^{[-2,0)^R}(x) & y_{-2} & s & y_{-1} & y_0 & y_1 & y_2 & y_3 & \\
      \cline{2-8}
      \xi^{[-1,0)^R}(x) & y_{-2} & y_{-1} & s & y_0 & y_1 & y_2 & y_3 & \\
      \cline{2-8}
      \multicolumn{9}{c}{ }& \\
      \cline{2-8}
      x\hphantom{)} & y_{-2} & y_{-1} & y_0 & s & y_1 & y_2 & y_3 & & x\\
      \cline{2-8}
      \multicolumn{9}{c}{ }& \\
      \cline{2-8}
      \chi^{[0,1)}(x) & y_{-2} & y_{-1} & y_0 & y_1 & s & y_2 & y_3  & \\
      \cline{2-8}
      \chi^{[0,2)}(x) & y_{-2} & y_{-1} & y_0 & y_1 & y_2 & s & y_3  & \\
      \cline{2-8}
      \chi^{[0,3)}(x) & y_{-2} & y_{-1} & y_0 & y_1 & y_2 & y_3 & s & \\
      \cline{2-8}
      \multicolumn{1}{c}{} & \multicolumn{7}{C}{\vdots} & \\
      \cline{2-8}
      \chi^{0+}(x) & y_{-2} & y_{-1} & y_0 & y_1 & y_2 & y_3 & y_4 & & z \\
      \cline{2-8}
    \end{tabular}

    \caption{A sequence of configurations with the center cell at
      position $0$. Starting with configuration $x$ in the middle when
      going downward the swapping rule $\chi$ is applied to blocks
      $[0,1]$, $[1,2]$, etc., and going from $x$ upward
      rule $\xi=\chi$ is applied to blocks $[-1,0]$, $[-2,-1]$ and so
      on.}
    \label{fig:ex-swap}
  \end{figure}
\end{example}

\begin{example}
  \label{ex:RightXOR}
  Let $S=\{0,1\}$ and consider the block rule
  $\chi\from S^{[0,2)}\to S^{[0,2)}: (a,b)\mapsto (a+b,b)$. Then sliding
  this rule over a configuration $x \in \{0,1\}^\Z$ produces the image
  of $x$ in the familiar exclusive-or cellular automaton with
  neighborhood $\{0,1\}$ (elementary CA 102). We will see in
  Example~\ref{ex:LeftXOR} that the exclusive-or CA with neighborhood
  $\{-1,0\}$ can not be defined this way.
\end{example}

\subsection{Definition of \arule s}

\begin{definition}
  \label{def:arelation}
  A bijective block rule $\chi$ with inverse $\xi$ defines a
  \emph{\arule\ relation} $F \subset \ST^\Z \times \ST^\Z$ by
  $(y, z) \in F$ iff for some $x \in \ST^\Z$ and some $i\in\Z$ we have
  $\xi^{i-}(x) = y$ and $\chi^{i+}(x) = z$.
  We call the pair $(x,i)$ a \emph{representation} of $(y, z)\in F$.
\end{definition}
Note that every $(x,i)\in \ST^\Z\times\Z$ is a representation of
exactly one pair, namely $(\xi^{i-}(x),\chi^{i+}(x))\in F$.

\begin{lemma}
  \label{lem:repr}
  Let $(x,i)$ be a representation of $(y, z)\in F$ under a bijective
  block rule $\chi$ of block length $n$. Then
  $x_{\minftytoi{i}} = z_{\minftytoi{i}}$ and
  $x_{\itoinfty{i+n}} = y_{\itoinfty{i+n}}$.
\end{lemma}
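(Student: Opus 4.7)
The plan is to unwind the definitions of $\chi^{i+}$ and $\xi^{i-}$ and exploit the locality of block rule applications: an individual application $\chi^k$ (or $\xi^k$) only touches coordinates in $[k,k+n)$ and leaves every other coordinate fixed. Both equalities then drop out of the observation that, along the sequences whose limits define $z$ and $y$, the coordinates in question are never rewritten.

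For the first equality, recall that $z = \chi^{i+}(x)$ is by definition the limit in $\ST^\Z$ of the configurations $\chi^{[i,j]}(x) = \chi^j\circ\cdots\circ\chi^i(x)$ as $j\to\infty$. Every factor $\chi^k$ with $k\geq i$ only changes coordinates in $[k,k+n)\subseteq\itoinfty{i}$. Hence for every $\ell<i$ and every $j\geq i$ we have $\chi^{[i,j]}(x)_\ell = x_\ell$, and passing to the limit yields $z_\ell = x_\ell$, i.e.\ $x_{\minftytoi{i}} = z_{\minftytoi{i}}$.

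The second equality is symmetric. The configuration $y = \xi^{i-}(x)$ is the limit of $\xi^{[j,i)^R}(x)$ as $j\to-\infty$, i.e.\ of $\xi^j\circ\xi^{j+1}\circ\cdots\circ\xi^{i-1}(x)$. Each factor $\xi^k$ here has $k\leq i-1$, and it only alters coordinates in $[k,k+n)\subseteq\minftytoi{i+n}$ because $k+n\leq i+n-1<i+n$. Thus for every $\ell\geq i+n$ and every $j<i$ we have $\xi^{[j,i)^R}(x)_\ell = x_\ell$, and the limit gives $y_\ell = x_\ell$, i.e.\ $x_{\itoinfty{i+n}} = y_{\itoinfty{i+n}}$.

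I do not anticipate a real obstacle: the lemma is essentially a bookkeeping statement about the support of block rule applications, and neither the bijectivity of $\chi$ nor any subtler property of the limit is needed. The only small care point is to confirm that ``the largest coordinate touched by $\xi^k$ for $k<i$'' is $k+n-1\leq i+n-2<i+n$, so that the boundary index $i+n$ is correctly on the untouched side.
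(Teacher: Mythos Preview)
Your proof is correct and follows exactly the same approach as the paper's own proof, which simply observes that applying $\chi$ at positions $j\geq i$ never changes cells at positions $k<i$ (and analogously for the second part). You have merely spelled out the limit argument and the boundary-index check in more detail than the paper does.
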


\begin{proof}
  Applying block rule $\chi$ at positions $j\geq i$ in $x$ never
  changes cells at positions $k<i$. %
  Therefore $x_k = (\toright{\chi}(x))_k = z_k$ proving the
  first part.
  The second part follows analogously.
\end{proof}

\begin{lemma}
  \label{lem:reprlemma}
  Let $(y,z)\in F$ be fixed. For all $i\in\Z$
  denote
  $$R_i=\{x\in\ST^\Z\ |\ (x,i) \mbox{ is a representation of } (y,
  z)\}.$$ For $i<j$ the function
  $\chi^{[i,j)}: R_i\longrightarrow R_j$ is a bijection, with inverse
  $\xi^{[i,j)^R}$. All $R_i$ have the same finite cardinality.
\end{lemma}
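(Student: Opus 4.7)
The plan is to check the three claims in sequence: $\chi^{[i,j)}$ sends $R_i$ into $R_j$, the map $\xi^{[i,j)^R}$ is a two-sided inverse, and $R_i$ is finite with cardinality independent of $i$.

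For the first step, I fix $x \in R_i$ and set $x' = \chi^{[i,j)}(x)$. I want to verify $\xi^{j-}(x') = y$ and $\chi^{j+}(x') = z$. The right-infinite side is easy: for any $k \geq j$ we have $\chi^{[j,k]} \circ \chi^{[i,j)} = \chi^{[i,k]}$ as compositions of individual block-rule applications, so $\chi^{[j,k]}(x') = \chi^{[i,k]}(x)$, and letting $k \to \infty$ gives $\chi^{j+}(x') = \chi^{i+}(x) = z$. For the left-infinite side, I use that $\xi$ inverts $\chi$ at any single position, so $\xi^{[i,j)^R} \circ \chi^{[i,j)}$ is the identity on $\ST^\Z$; hence $\xi^{[i,j)^R}(x') = x$. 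Then for $k < i$, $\xi^{[k,j)^R}(x') = \xi^{[k,i)^R}\bigl(\xi^{[i,j)^R}(x')\bigr) = \xi^{[k,i)^R}(x)$, and taking $k \to -\infty$ yields $\xi^{j-}(x') = \xi^{i-}(x) = y$. Thus $x' \in R_j$.

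For the second step, the same identity $\xi^{[i,j)^R} \circ \chi^{[i,j)} = \ID = \chi^{[i,j)} \circ \xi^{[i,j)^R}$ on all of $\ST^\Z$ shows the two maps are mutually inverse bijections of $\ST^\Z$. A symmetric computation to the one above (swap the roles of $i,j$ and of $\chi,\xi$) shows that $\xi^{[i,j)^R}$ sends $R_j$ into $R_i$: for $x' \in R_j$ and $x = \xi^{[i,j)^R}(x')$, rewriting $\chi^{[i,k]} = \chi^{[j,k]} \circ \chi^{[i,j)}$ for $k \geq j$ gives $\chi^{i+}(x) = \chi^{j+}(x') = z$, and splitting $\xi^{[k,j)^R} = \xi^{[k,i)^R} \circ \xi^{[i,j)^R}$ for $k < i$ gives $\xi^{i-}(x) = \xi^{j-}(x') = y$. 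Combining, the restrictions of $\chi^{[i,j)}$ and $\xi^{[i,j)^R}$ to $R_i$ and $R_j$ are inverse bijections.

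For the third step, finiteness is immediate from Lemma~\ref{lem:repr}: if $x \in R_i$ then $x$ agrees with $z$ on $\minftytoi{i}$ and with $y$ on $\itoinfty{i+n}$, so $x$ is determined by its restriction to the length-$n$ block $[i,i+n)$, giving $|R_i| \leq |\ST|^n$. Once any two $R_i$ and $R_j$ are in bijection this common cardinality is forced.

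The only step requiring real care is the first: one must correctly manipulate the limits defining $\chi^{i+}$ and $\xi^{i-}$ and be scrupulous about the splittings $\chi^{[i,k]} = \chi^{[j,k]}\circ\chi^{[i,j)}$ and $\xi^{[k,j)^R} = \xi^{[k,i)^R}\circ\xi^{[i,j)^R}$, which hold because the block-rule applications $\chi^\ell$ only affect cells in $[\ell,\ell+n)$ and the compositions are being listed in the correct order. After that the remaining steps are essentially bookkeeping.
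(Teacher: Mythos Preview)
Your proof is correct and follows essentially the same approach as the paper: you establish (by unpacking the limits) the identities $\chi^{j+}\circ\chi^{[i,j)}=\chi^{i+}$ and $\xi^{j-}\circ\chi^{[i,j)}=\xi^{i-}$, which the paper states directly, then use them together with the mutual inverse relation $\xi^{[i,j)^R}\circ\chi^{[i,j)}=\ID$ to conclude; finiteness via Lemma~\ref{lem:repr} is handled identically.
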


\begin{proof}
  The claim follows directly from the definition and the facts that
  \begin{equation}
    \label{eq:reprlemma}
    \chi^{j+}\circ \chi^{[i,j)}=\chi^{i+} \mbox{ and }  \xi^{j-}\circ \chi^{[i,j)} =\xi^{i-},
  \end{equation}
  and that $\chi^{[i,j)}$ and $\xi^{[i,j)^R}$ are inverses of each other.

  More precisely, if $x\in R_i$ then
  $z=\chi^{i+}(x)=\chi^{j+}(\chi^{[i,j)}(x))$ and
  $y=\xi^{i-}(x)=\xi^{j-}(\chi^{[i,j)}(x))$ so
  $\chi^{[i,j)}(x)\in R_j$. This proves that $\chi^{[i,j)}$ maps $R_i$
  into $R_j$. This map is injective. To prove surjectivity, we show
  that for any $x'\in R_j$ its pre-image $\xi^{[i,j)^R}(x')$ is in
  $R_i$. Composing the formulas in (\ref{eq:reprlemma}) with
  $\xi^{[i,j)^R}$ from the right gives
  $\chi^{j+}=\chi^{i+} \circ \xi^{[i,j)^R}$ and
  $\xi^{j-} =\xi^{i-}\circ \xi^{[i,j)^R}$, so as above we get
  $z=\chi^{j+}(x')=\chi^{i+}(\xi^{[i,j)^R}(x'))$ and
  $y=\xi^{j-}(x')=\xi^{i-}(\xi^{[i,j)^R}(x'))$, as required.

  The fact that the cardinalities are finite follows from
  Lemma~\ref{lem:repr}: there are at most $|\ST|^n$ choices of
  $x_{[i,i+n)}$ in $x\in R_i$.
\end{proof}

\begin{lemma}
  \label{lem:AruleClosed}
  A \arule\ relation $F \subset \ST^\Z \times \ST^\Z$ defined by a
  bijective block rule $\chi$ is closed and shift-invariant, and the
  projections $(y,z)\mapsto y$ and $(y,z)\mapsto z$ map $F$
  surjectively onto $\ST^\Z$.
\end{lemma}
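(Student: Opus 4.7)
The statement has four assertions; I would treat them in the order shift-invariance, closedness, and surjectivity of each projection. The main technical obstacle I expect is the finite-dependence (hence continuity) of the infinite-sweep maps $\xi^{0-}$ and $\chi^{0+}$; once that is in hand, everything else is routine manipulation of the inverse-bijection identities already established.

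\emph{Shift-invariance.} If $(y,z)\in F$ has representation $(x,i)$, then $(\sigma x,i-1)$ represents $(\sigma y,\sigma z)$. This reduces to the intertwining identities $\sigma\circ\xi^{i-}=\xi^{(i-1)-}\circ\sigma$ and $\sigma\circ\chi^{i+}=\chi^{(i-1)+}\circ\sigma$, obtained by noting that shifting a configuration by $\sigma$ shifts every block-rule application window by $-1$ and then commuting $\sigma$ past the defining limits.

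\emph{Closedness.} Let $\Phi_i(x)=(\xi^{i-}(x),\chi^{i+}(x))$, so $F=\bigcup_i\Phi_i(\ST^\Z)$ by definition. The sweep identities $\xi^{i-}=\xi^{(i-1)-}\circ\xi^{i-1}$ and $\chi^{(i-1)+}=\chi^{i+}\circ\chi^{i-1}$ are immediate from the limit definitions, and combined with $\chi^{i-1}\circ\xi^{i-1}=\ID$ they yield $\Phi_i(x)=\Phi_{i-1}(\xi^{i-1}(x))$; iterating, $\Phi_i(\ST^\Z)=\Phi_0(\ST^\Z)$ for every $i$, so $F=\Phi_0(\ST^\Z)$. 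For continuity of $\Phi_0$ I would show that for each coordinate $k$ the value $\xi^{0-}(x)_k$ depends on only finitely many coordinates of $x$: the last leftward sweep step that can touch cell $k$ is $\xi^{k-n+1}$, and tracing back through the finitely many earlier sweep steps whose modification windows overlap that one, the final value at cell $k$ is determined by a bounded window of $x$ around $k$; the argument for $\chi^{0+}$ is symmetric. Thus $F=\Phi_0(\ST^\Z)$ is the continuous image of a compact space, hence closed in $\ST^\Z\times\ST^\Z$.

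\emph{Surjectivity.} Given $y\in\ST^\Z$, I would construct $x$ with $\xi^{0-}(x)=y$ by compactness. For each $j<0$ set $x^{(j)}=\chi^{[j,0)}(y)$, so that by Lemma~\ref{lem:reprlemma} we have $\xi^{[j,0)^R}(x^{(j)})=y$ exactly. The infinite sweep $\xi^{0-}(x^{(j)})$ continues this partial sweep with $\xi^{j-1},\xi^{j-2},\ldots$, which only modify coordinates $<j+n-1$; thus $\xi^{0-}(x^{(j)})$ agrees with $y$ on $[j+n-1,\infty)$, and $\xi^{0-}(x^{(j)})\to y$ as $j\to-\infty$. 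Compactness of $\ST^\Z$ yields a convergent subsequence $x^{(j_l)}\to x$, and continuity of $\xi^{0-}$ then gives $\xi^{0-}(x)=y$, so that $(y,\chi^{0+}(x))\in F$. Surjectivity of the second projection is symmetric, starting from $x^{(j)}=\xi^{[0,j)^R}(z)$ for $j>0$.
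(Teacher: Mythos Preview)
Your proof is correct and, for shift-invariance and closedness, essentially matches the paper: both reduce $F$ to the image $\Phi_0(\ST^\Z)$ of a single continuous map (the paper cites Lemma~\ref{lem:reprlemma} for this reduction, you re-derive it via $\Phi_i=\Phi_{i-1}\circ\xi^{i-1}$ and bijectivity of $\xi^{i-1}$), and both then use that continuous images of compacta are closed. Your explicit finite-dependence argument for continuity is a detail the paper leaves implicit.

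For surjectivity you take a genuinely different route. The paper observes that the projection $(y,z)\mapsto z$ has \emph{dense} image---since $\chi^{0+}$ never touches $x_{(-\infty,0)}$, one has $z_{(-\infty,0)}=x_{(-\infty,0)}$, so any finite word occurs in some $z$---and then concludes surjectivity because the image, being a continuous projection of the closed set $F$ in a compact space, is closed; the first projection is handled symmetrically. Your approach instead builds an explicit preimage of a given $y$ by taking a subsequential limit of the approximants $x^{(j)}=\chi^{[j,0)}(y)$ and invoking continuity of $\xi^{0-}$. Both work; the paper's density argument is shorter and avoids constructing anything, while yours has the minor advantage of exhibiting the preimage more concretely (and, incidentally, of not relying on having already proved closedness of $F$, only continuity of $\xi^{0-}$).
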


\begin{proof}
  By Lemma~\ref{lem:reprlemma} every $(y,z)\in F$ has a representation
  $(x,0)$ at position $0$. Therefore, the relation $F$ is closed as
  the image of $\ST^\Z$ in the continuous map
  $x \mapsto (\xi^{0-}(x), \chi^{0+}(x))$.

  Clearly $(x,i)$ is a representation of $(y,z)$ if and only if
  $(\sigma(x),i-1)$ is a representation of $(\sigma(y),\sigma(z))$.
  Hence the relation $F$ is shift-invariant.

  The image of $F$ under the projection $(y,z)\mapsto z$ is dense. To
  see this, consider any finite word $w$ and a configuration $x$ with
  $x_{[-|w|,0)}=w$. The pair $(x,0)$ represents some $(y,z)\in F$, and
  because $z=\chi^{0+}(x)$ we have $z_{[-|w|,0)}=w$. The denseness
  follows now from shift invariance and the fact that $w$ was
  arbitrary. The image of $F$ under the projection is closed so the
  image is the whole $\ST^\Z$.

  The proof for the other projection is analogous. 
\end{proof}

\begin{corollary}
  \label{cor:arule}
  If $F \subset \ST^\Z \times \ST^\Z$ defined by a bijective block
  rule $\chi$ is a function (that is, if for all $y\in \ST^\Z$ there
  is at most one $z\in \ST^\Z$ such that $(y,z)\in F$) then this
  function $f:y\mapsto z$ is a surjective cellular automaton.
\end{corollary}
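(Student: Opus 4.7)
The plan is to leverage Lemma~\ref{lem:AruleClosed} together with the Curtis--Hedlund--Lyndon theorem. All the hard work has essentially been done; once $F$ is assumed to be a function, the remaining content is a straightforward topological/dynamical check.

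First I would verify that under the hypothesis $f$ is actually defined on all of $\ST^\Z$, not merely on a subset. This follows because the first projection $F \to \ST^\Z$ is surjective by Lemma~\ref{lem:AruleClosed}, so for every $y$ there exists \emph{at least} one $z$ with $(y,z)\in F$; the hypothesis gives \emph{at most} one, and hence exactly one. Surjectivity of $f$ is then immediate from surjectivity of the second projection in the same lemma.

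Next I would check that $f$ commutes with the shift. This is a direct translation of the shift-invariance of $F$ proved in Lemma~\ref{lem:AruleClosed}: if $(y,f(y))\in F$ then $(\sigma(y),\sigma(f(y)))\in F$, and by uniqueness this forces $f(\sigma(y))=\sigma(f(y))$.

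The crucial step is continuity. Since $\ST^\Z$ is compact Hausdorff in the product topology, a function $f\from \ST^\Z \to \ST^\Z$ whose graph is closed is automatically continuous: given a convergent net $y^{(n)}\to y$, any subnet of $f(y^{(n)})$ has a convergent sub-subnet with some limit $z$, and $(y,z)$ lies in the closed graph $F$, forcing $z=f(y)$; hence $f(y^{(n)})\to f(y)$. The graph of $f$ is exactly $F$, which is closed by Lemma~\ref{lem:AruleClosed}, so $f$ is continuous. Finally, the Curtis--Hedlund--Lyndon theorem converts a continuous shift-commuting self-map of $\ST^\Z$ into a cellular automaton, completing the proof.

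I do not anticipate any real obstacle: the only subtlety is recognizing that ``closed graph implies continuous'' uses compactness of the codomain, which holds since $\ST$ is finite. Everything else is bookkeeping on top of Lemma~\ref{lem:AruleClosed}.
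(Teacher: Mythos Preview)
Your proposal is correct and follows essentially the same route as the paper: use Lemma~\ref{lem:AruleClosed} to get that $f$ is total and surjective, deduce continuity from the closed-graph property (via compactness of $\ST^\Z$), and conclude by Curtis--Hedlund--Lyndon from continuity and shift-commutation. The paper's proof is terser---it simply asserts that closedness of $F$ gives continuity---whereas you spell out the compactness argument, but the structure is identical.
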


\begin{proof}
  Because the projections $(y,z)\mapsto y$ and $(y,z)\mapsto z$ are
  onto, the function $f$ is full and surjective. Because the relation
  $F \subset \ST^\Z \times \ST^\Z$ is closed, the function $f$ is
  continuous. As it is continuous and shift-invariant, it is a
  cellular automaton.
\end{proof}

\begin{definition}
  \label{def:arule}
  Let $\chi$ be a bijective block rule such that the \arule\ relation
  it defines is a function $f:\ST^\Z \longrightarrow \ST^\Z$. The
  surjective cellular automaton $f$ is called the \emph{\arule}\
  defined by $\chi$.
\end{definition}

Example~\ref{ex:swap} indicates that the \arule\ for the block rule
swapping two states is the left shift. By Corollary~\ref{cor:arule}
every \arule\ is a surjective CA. But not every surjective CA is a
slider. This will follow from an exact characterization of which
cellular automata are \arule s below.

\subsection{Characterization of \arule s}

We start by improving Corollary~\ref{cor:arule}, by showing that
\arule s are left-closing cellular automata.

\begin{definition}
  \label{def:LeftClosing}
  Two configurations $y$ and $y'$ are \emph{right-asymptotic} if there
  is an index $i\in\Z$ such that $y_{\itoinfty{i}}=y'_{\itoinfty{i}}$.
  They are called \emph{left-asymptotic} if there is an index $i\in\Z$
  such that $y_{\minftytoi{i}} = y'_{\minftytoi{i}}$.
  A CA $f$ is \emph{left-closing} if for any two different
  right-asymptotic configurations $y$ and $y'$ we have
  $f(y)\not= f(y')$.
  Right-closing CA are defined symmetrically using left-asymptotic
  configurations.
\end{definition}

\begin{lemma}
  \label{lem:a-rule->lc}
  A \arule\ is a left-closing cellular automaton.
\end{lemma}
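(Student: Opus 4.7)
My plan is to prove the contrapositive. Assume $y \ne y'$ are right-asymptotic (say $y_{\itoinfty{i}} = y'_{\itoinfty{i}}$) and that $f(y) = f(y') = z$; I will derive a contradiction.

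Using Lemma~\ref{lem:reprlemma}, I choose a common position $j \ge i$ and pick representations $(x,j) \in R_j(y,z)$ and $(x',j) \in R_j(y',z)$. Lemma~\ref{lem:repr} forces $x_{\minftytoi{j}} = z_{\minftytoi{j}} = x'_{\minftytoi{j}}$ and $x_{\itoinfty{j+m}} = y_{\itoinfty{j+m}} = y'_{\itoinfty{j+m}} = x'_{\itoinfty{j+m}}$, so $x$ and $x'$ already agree outside the block $[j, j+m)$. Moreover, no $\xi^k$ with $k < j$ affects position $j+m-1$ (the condition $j+m-1 \in [k,k+m)$ requires $k \ge j$), so $\xi^{j-}(x)_{j+m-1} = x_{j+m-1}$ and similarly for $x'$; together with $y_{j+m-1} = y'_{j+m-1}$ this pins $x_{j+m-1} = x'_{j+m-1}$, confining the possible disagreement to $[j, j+m-1)$. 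If $x = x'$ we would get $y = \xi^{j-}(x) = \xi^{j-}(x') = y'$, a contradiction, so $x \ne x'$.

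The next step is to propagate this difference by iterating the bijection from Lemma~\ref{lem:reprlemma}. Setting $u_L := \chi^{[j,L)}(x) \in R_L(y,z)$ and $u'_L := \chi^{[j,L)}(x') \in R_L(y',z)$ for each $L \ge j$, bijectivity of $\chi^{[j,L)}$ keeps $u_L \ne u'_L$, and the same local analysis applied at position $L$ shows they agree outside $[L, L+m-1)$ with the forced value $y_{L+m-1}$ at position $L+m-1$. Both sequences converge to $z$ in the product topology as $L \to \infty$, and the disagreement window drifts rightward to $+\infty$.

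The remaining, and hardest, step is to turn this drift into a genuine contradiction. My plan is to encode the evolving disagreement by the finite-state pair $\bigl(u_L|_{[L,L+m-1)}, u'_L|_{[L,L+m-1)}\bigr) \in \ST^{m-1} \times \ST^{m-1}$, which evolves deterministically under a single-step map determined by $\chi$, the appropriate bit of $z$, and the forced last coordinate $y_{L+m-1}$. Once $L$ is large enough that $y_{L+m-1} = y'_{L+m-1}$, the two sequences obey the same dynamics. A pigeonhole argument on this finite pair-space yields indices $L < L'$ at which the pair repeats; applying the inverse $\xi^{[L,L')^R}$ of the bijection $\chi^{[L,L')}$ to the coincident configurations, together with the facts that both representations have the same left-tail $z_{\minftytoi{L}}$ and the same right-tail beyond position $L+m-1$, is intended to force $u_L = u'_L$ and hence $y = \xi^{L-}(u_L) = \xi^{L-}(u'_L) = y'$, contradicting our assumption. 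I expect the delicate point to be verifying that the repeat in the window, combined with agreement outside the window, does upgrade to equality of the full configurations despite the possibly non-periodic $y$ and $z$.
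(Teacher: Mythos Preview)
Your setup through the second paragraph is correct: you correctly pin down that $u_L$ and $u'_L$ differ only in the window $[L,L+m-1)$, and that this difference persists for all $L$ by bijectivity of $\chi^{[j,L)}$. The transducer picture is also right: writing $a_L=(u_L)_{[L,L+m-1)}$ and $a'_L=(u'_L)_{[L,L+m-1)}$, one has $\chi(a_L\cdot y_{L+m-1})=z_L\cdot a_{L+1}$ and likewise for $a'_L$, so the pair $(a_L,a'_L)$ evolves deterministically driven by the symbol $y_{L+m-1}$.

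The gap is exactly where you flag it. A pigeonhole repeat $(a_L,a'_L)=(a_{L'},a'_{L'})$ does \emph{not} force $u_L=u'_L$, nor anything else contradictory. Applying $\xi^{[L,L')^R}$ to $u_{L'}$ and $u'_{L'}$ just returns $u_L$ and $u'_L$; there are no ``coincident configurations'' to feed it. The evolution of $(a_L,a'_L)$ is driven by the non-periodic sequence $y_{L+m-1},y_{L+m},\ldots$, so a repeat of the pair gives no cycle, and even if $y$ were periodic, a genuine periodic orbit with $a_L\neq a'_L$ throughout is not ruled out by anything you have established. What you would need is $a_L=a'_L$ for some $L$ (since then $u_L=u'_L$ and hence $y=\xi^{L-}(u_L)=y'$), and the pair-pigeonhole simply does not deliver that.

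The paper's proof avoids this by pigeonholing in the other direction: rather than tracking one pair of windows over many positions, it manufactures exponentially many configurations $y_k$ (indexed by $k\in\{0,1\}^n$, built by independently choosing $w_0$ or $w_1$ in $n$ slots) that all share the same $f$-image and the same right tail, and then pigeonholes their representations on the \emph{single} window $[0,m)$. Two representations with the same window, the same right tail, and agreeing images on a long enough left interval must produce the same value under $\xi^{0-}$ at the coordinate where the underlying $y_k,y_{k'}$ differ --- that is the contradiction. The amplification from one bad pair $(y,y')$ to $2^n$ configurations is the missing idea in your approach.
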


\begin{proof}
  Let \arule\ $f$ be defined by a bijective block rule
  $\chi : \ST^m \to \ST^m$, so that $f$ is a surjective cellular
  automaton. Let $\xi$ be the inverse of $\chi$.

  Suppose $f$ is not left-closing, so that there exist two distinct
  right-asymptotic configurations $y$ and $y'$ such that
  $f(y) = f(y')$. We may suppose the rightmost difference in $y$ and
  $y'$ is at the origin. Let $r$ be a radius for the local rule of
  $f$, where we may suppose $r \geq m$, and let
  $y_{[-2r, 2r]} = w_0 v, y'_{[-2r, 2r]} = w_1 v$ where
  $|w_1| = |w_2| = 2r+1$. We can apply the local rule of $f$ to words,
  shrinking them by $r$ symbols on each side, and write
  $F : \ST^* \to \ST^*$ for this map. Since $y$ and $y'$ have the same
  $f$-image, we have $F(w_0 v) = F(w_1
  v)$. 

  Let $n$ be such that $2^n > |\ST|^m$ and for each $k \in \{0,1\}^n$,
  define the configuration
  \[ y_k = ...0000 w_{k_1} v w_{k_2} v \cdots v w_{k_n} v . 0000... \]
  where the right tail of $0$s begins at the origin. For each $y_k$,
  pick a point $x_k$ representing $(y_k, f(y_k))$ at the origin. By
  the pigeon hole principle, there exist $k \neq k'$ such that
  $(x_k)_{[0,m)} = (x_{k'})_{[0,m)}$. Let $j$ be the maximal
  coordinate where $k$ and $k'$
  differ.

  Now, the rightmost difference in $y_k$ and $y_{k'}$ is in coordinate
  $R = -2r-1 - (4r+1)(n-j)$ (the last coordinate of the word
  $w_{k_j}$). We have
  $f(y_k)_{[R-r, \infty)} = f(y_{k'})_{[R-r, \infty)}$ by the
  assumption that $j$ is the rightmost coordinate where $k$ and $k'$
  differ, 
  and by $F(w_0 v) = F(w_1 v)$. Thus we also have
  $(x_k)_{[R-r, 0)} = (x_{k'})_{[R-r, 0)}$, since
  $\chi^{0+}(x_k) = f(y_k)$ and $\chi^{0+}(x_{k'}) = f(y_{k'})$ and
  these sweeps do not modify coordinates in $[R-r, 0)$. Recall that we
  have $(x_k)_{[0,m)} = (x_{k'})_{[0,m)}$ by the choice of $k$ and
  $k'$, so $(x_k)_{[R-r, m)}$ and $(x_{k'})_{[R-r, m)}$. 

  Now, we should have $\xi^{0-}(x_k) = y_k$ and
  $\xi^{0-}(x_{k'}) = y_{k'}$, in particular we should have
  $\xi^{0-}(x_k)_R \neq \xi^{0-}(x_{k'})_R$. But this is impossible:
  $\xi^{0-}(x_k)_R$ is completely determined by $(x_k)_{[R-m+1, m)}$
  and similarly $\xi^{0-}(x_{k'})_R$ is determined by
  $(x_{k'})_{[R-m+1, m)}$, but
  $(x_k)_{[R-m+1, m)} = (x_{k'})_{[R-m+1, m)}$ since
  $(x_k)_{[R-r, m)} = (x_{k'})_{[R-r, m)}$ and $r \geq m.$
\end{proof}

In the rest of this section, we only consider the case when the slider
relation $F$ that $\chi$ defines is a function.

Next we analyze numbers of representations. We call a representation
$(x,i)$ of a pair $(y,z)$ simply a representation of configuration
$y$, because $z=f(y)$ is determined by $y$. Let $R(y,i)$ be the set of
configurations $x$ such that $(x,i)$ is a representation of $y$. By
Lemma~\ref{lem:repr} the elements of $R(y,i)$ have the form
$x=f(y)_{\minftytoi{i}} \cdot w \cdot y_{\itoinfty{i+n}}$ for some
word $w\in \ST^n$ where $n$ is the block length of $\chi$.

By Lemma~\ref{lem:reprlemma} the cardinality of the set $R(y,i)$ is
independent of $i$. Let us denote by $N(y)$ this cardinality. It turns
out that the number is also independent of the configuration $y$.

\begin{lemma}
  \label{lem:NIsSame}
  $N(y)=N(y')$ for all configurations $y,y'$.
\end{lemma}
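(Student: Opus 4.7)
The plan is to produce, for arbitrary $y, y' \in \ST^\Z$, bijections between the representation sets. Since Lemma~\ref{lem:reprlemma} gives $|R(y,i)|$ independent of $i$, I am free to choose the position $i$ differently at each step. I would route the argument through the intermediate configuration $y'' := y_{\minftytoi{0}} \cdot y'_{\itoinfty{0}}$, so that $y, y''$ are left-asymptotic and $y'', y'$ are right-asymptotic, and handle these two cases separately. The main tools are the structural form $x = f(y)_{\minftytoi{i}} \cdot w \cdot y_{\itoinfty{i+n}}$ for representations (Lemma~\ref{lem:repr}); the identity $\chi^{i+} = f \circ \xi^{i-}$, which is just a restatement of the slider definition (since $\chi^{i+}(x) = f(y)$ when $\xi^{i-}(x) = y$); and the fact that each output coordinate of $\xi^{i-}$ (resp.\ $\chi^{i+}$) depends only on input coordinates lying in a bounded window to the right (resp.\ left) of some fixed position.

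For the left-asymptotic case I would fix $i$ far enough to the left that (a) the radius-$r$ local rule of $f$ forces $f(y)_{\minftytoi{i}} = f(y'')_{\minftytoi{i}}$, and (b) the dependency window of every coordinate of $\xi^{i-}$ sits inside $\minftytoi{0}$. Then for $x = f(y)_{\minftytoi{i}} \cdot w \cdot y_{\itoinfty{i+n}} \in R(y,i)$ I set $\Phi(x) := f(y'')_{\minftytoi{i}} \cdot w \cdot y''_{\itoinfty{i+n}}$; this differs from $x$ only on $\itoinfty{0}$. Condition (b) lets me compute $\xi^{i-}(\Phi(x))$ coordinate by coordinate: on cells touched by the sweep the computation sees exactly the same input as in $x$ and therefore outputs $y = y''$ (we are in the agreement zone), while on cells not touched by the sweep the output equals $\Phi(x)$ itself, which is $y''$ by construction. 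So $\xi^{i-}(\Phi(x)) = y''$, and then $\chi^{i+}(\Phi(x)) = f(y'')$ via the identity, placing $\Phi(x)$ in $R(y'',i)$. By symmetry $\Phi$ is a bijection, giving $N(y) = N(y'')$.

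For the right-asymptotic case I would take $i$ far to the right and, for $x \in R(y'',i)$, define $\Psi(x) := f(y')_{\minftytoi{i}} \cdot w \cdot y'_{\itoinfty{i+n}}$; this differs from $x$ only on some left tail. The equality $\chi^{i+}(\Psi(x)) = f(y')$ is direct: $\chi^{i+}$ reads only cells $\geq i$ of its input, where $\Psi(x) = x$, so it returns $f(y'')_{\itoinfty{i}} = f(y')_{\itoinfty{i}}$ (using $i \geq r$ and right-asymptoticity), while the left half equals $f(y')_{\minftytoi{i}}$ by construction. The delicate point is $\xi^{i-}(\Psi(x)) = y'$: letting $\tilde{y}' := \xi^{i-}(\Psi(x))$, the identity forces $f(\tilde{y}') = f(y')$, and a window argument shows that $\tilde{y}'$ agrees with $y'$ on $\itoinfty{i+n-1}$ and is therefore right-asymptotic to $y'$. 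Invoking that $f$ is left-closing (Lemma~\ref{lem:a-rule->lc}) upgrades this to $\tilde{y}' = y'$, so $\Psi$ is a bijection and $N(y'') = N(y')$. Chaining the two bijections gives $N(y) = N(y')$, and the main obstacle is exactly this asymmetry: the right-asymptotic case genuinely needs left-closing, whereas the left-asymptotic case follows from locality alone, so it is crucial that the slider hypothesis already supplies left-closing via Lemma~\ref{lem:a-rule->lc}.
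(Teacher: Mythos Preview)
Your proposal is correct and follows essentially the same route as the paper: reduce to the left-asymptotic and right-asymptotic cases via the intermediate configuration $y'' = y_{\minftytoi{0}}\cdot y'_{\itoinfty{0}}$, handle the left-asymptotic case by a tail-swap bijection justified purely by locality of the sweeps, and handle the right-asymptotic case by swapping the $f(\cdot)_{\minftytoi{i}}$ part and invoking left-closingness (Lemma~\ref{lem:a-rule->lc}) to force $\tilde y' = y'$. The only cosmetic differences are that the paper picks $i$ exactly at the point of agreement (so your condition~(a) is automatic) and phrases the bijection as $z_{\minftytoi{i}}y_{\itoinfty{i}} \leftrightarrow z_{\minftytoi{i}}y'_{\itoinfty{i}}$ rather than in the $f(y)_{\minftytoi{i}}\cdot w\cdot y_{\itoinfty{i+n}}$ form, but with your condition~(a) in force these are literally the same map.
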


\begin{proof}
  Let $n$ be the block length of rule $\chi$.
  \begin{enumerate}
  \item[(i)] Assume first that $y,y'$ are left-asymptotic. There is an
    index $i\in\Z$ such that $y_{\minftytoi{i}} = y'_{\minftytoi{i}}$.
    Then for any $z$ we have that
    $z_{\minftytoi{i}}y_{\itoinfty{i}}\in R(y,i-n)$ if and only if
    $z_{\minftytoi{i}}y'_{\itoinfty{i}}\in R(y',i-n)$. This gives a
    bijection between $R(y,i-n)$ and $R(y',i-n)$ so that
    $N(y)=\card{R(y,i-n)}=\card{R(y',i-n)}=N(y')$.

  \item[(ii)] Assume then that $y,y'$ are right-asymptotic. Also
    $f(y)$ and $f(y')$ are right-asymptotic so there is an index
    $i\in\Z$ such that $f(y)_{\itoinfty{i}} = f(y')_{\itoinfty{i}}$.
    Consider $z_{[i,\infty)}$ be such that
    $x=f(y)_{\minftytoi{i}}z_{\itoinfty{i}}\in R(y,i)$. Then
    $\chi^{i+}(x) = f(y)$. Consider then
    $x'=f(y')_{\minftytoi{i}}z_{\itoinfty{i}}$ obtained by replacing
    the left half $f(y)_{\minftytoi{i}}$ by $f(y')_{\minftytoi{i}}$.
    Because $f(y)_{\itoinfty{i}} = f(y')_{\itoinfty{i}}$ we have that
    $\chi^{i+}(x') = f(y')$. The configuration $y''$ represented by
    $(x',i)$ is right-asymptotic with $y'$ and satisfies
    $f(y'')=f(y')$. Because $f$ is left-closing by
    Lemma~\ref{lem:a-rule->lc}, we must have $y''=y'$. We conclude
    that $f(y)_{\minftytoi{i}}z_{\itoinfty{i}}\in R(y,i)$ implies that
    $f(y')_{\minftytoi{i}}z_{\itoinfty{i}}\in R(y',i)$, and the
    converse implication also holds by a symmetric argument. As in
    (i), we get that $N(y)=\card{R(y,i)}=\card{R(y',i)}=N(y')$.

  \item[(iii)] Let $y,y'$ be arbitrary. Configuration
    $y''=y_{\minftytoi{0}}y'_{\itoinfty{0}}$ is left-asymptotic with
    $y$ and right-asymptotic with $y'$. By cases (i) and (ii) above we
    have $N(y)=N(y'')=N(y')$.
  \end{enumerate}
\end{proof}

\noindent
As $N(y)$ is independent of $y$ we write $N$ for short.

Next we define right stairs. They were defined in~\cite{blockCA} for
reversible cellular automata -- here we generalize the concept to
other CA and show that the concept behaves well when the cellular
automaton is left-closing. A right stair is a pair of words that can
be extracted from two consecutive configurations $x$ and $f(x)$ that
coincide with $y$ and $z$, respectively, as shown in
Figure~\ref{fig:stair}. The precise definition is as follows.

\begin{figure}
  \begin{center}
    \includegraphics[scale=0.3]{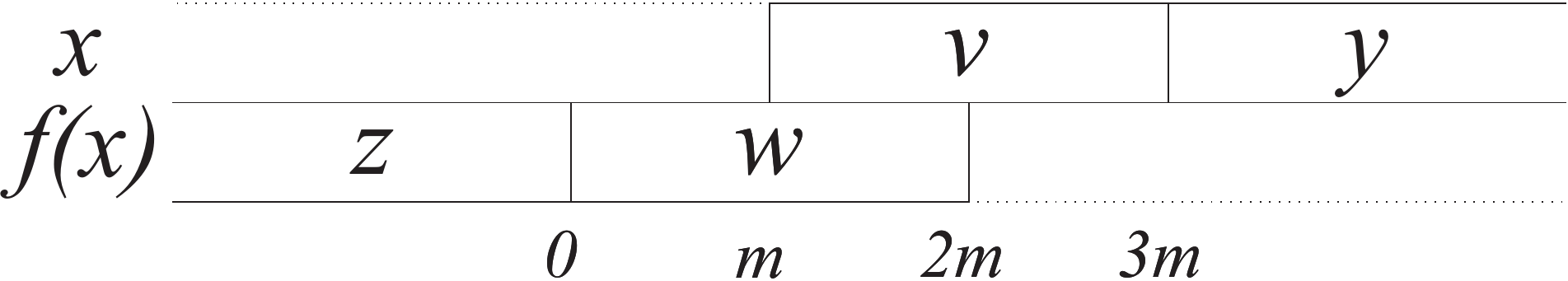}
  \end{center}
  \caption{A right stair $(v,w)$ of length $3m$ connecting $y$ and
    $z$, confirmed by $x$ at position $i=0$.}
  \label{fig:stair}
\end{figure}

\begin{definition}
  \label{def:stair}
  Let $f:\ST^\Z\longrightarrow \ST^\Z$ be a cellular automaton, and
  let $m$ be a positive integer. Let $y\in\ST^{\itoinfty{i+3m}}$ be a
  right infinite word and let $z\in\ST^{\minftytoi{i}}$ be a
  left-infinite word.
  \begin{itemize}
  \item A pair of words $(v,w)\in\ST^{2m}\times \ST^{2m}$ is a
    \emph{right stair connecting $(y,z)$} if there is a configuration
    $x\in\ST^{\Z}$ such that $vy=x_{\itoinfty{i+m}}$ and
    $zw=f(x)_{\minftytoi{i+2m}}$.
  \item The stair has \emph{length} $3m$ and it is \emph{confirmed}
    (at position $i$) by configuration $x$.
  \item We write $\stairs_{3m}(y,z)$ for the set of all right stairs
    of length $3m$ connecting $(y,z)$.
  \item We write $\stairs_{3m}$ for the union of $\stairs_{3m}(y,z)$
    over all $y$ and $z$.
  \end{itemize}
\end{definition}
Due to shift invariance, $x$ confirms $(v,w)\in\stairs_{3m}(y,z)$ if
and only if $\sigma(x)$ confirms
$(v,w)\in\stairs_{3m}(\sigma(y),\sigma(z))$. This means that
$\stairs_{3m}(y,z)=\stairs_{3m}(\sigma(y),\sigma(z))$, so it is enough
to consider $i=0$ in Definition~\ref{def:stair}. In terms of
cylinders, $(v,w)\in\stairs_{3m}$ if and only if
$f([v]_{[m,3m)})\cap [w]_{[0,2m)} \neq\emptyset$.

\begin{figure}
  \begin{center}
    \includegraphics[scale=0.3]{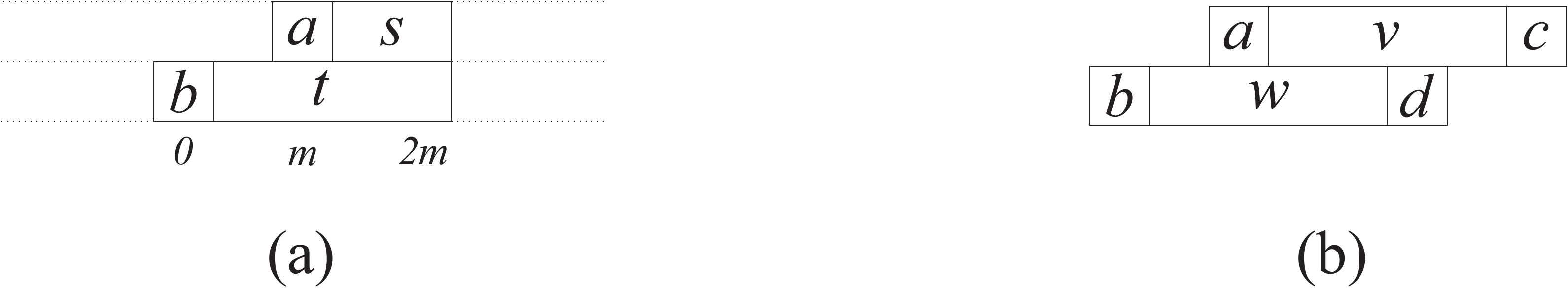}
  \end{center}
  \caption{(a) An illustration for Lemma~\ref{lem:kurka}, and (b) an
    illustration for Corollary~\ref{cor:kurkacorollary}(b) and for
    Lemma~\ref{lem:lcdiv->a-rule}.}
  \label{fig:kurka}
\end{figure}

We need the following known fact concerning left-closing CA. It
appears as Proposition 5.44 in~\cite{kurkabook} where it is stated for
right-closing CA. See Figure~\ref{fig:kurka}(a) for an illustration.

\begin{lemma}[Proposition 5.44 in~\cite{kurkabook}]
  \label{lem:kurka}
  Let $f$ be a left-closing CA. For all sufficiently large $m\in\N$,
  if $s\in \ST^m$ and $t\in \ST^{2m}$ are such that
  $f([s]_{(m,2m]})\cap [t]_{(0,2m]} \neq\emptyset$ then for all
  $b\in \ST$ there exists a unique $a\in \ST$ such that
  $f([as]_{[m,2m]})\cap [bt]_{[0,2m]} \neq\emptyset$.
\end{lemma}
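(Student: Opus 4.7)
I would prove uniqueness and existence of the extending symbol $a$ separately: uniqueness by a direct compactness argument exploiting left-closingness, and existence by a counting argument resting on the (standard) surjectivity of left-closing cellular automata.

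For uniqueness, suppose it fails for arbitrarily large $m$: for each such $m$ pick $s \in \ST^m$, $t \in \ST^{2m}$, $b \in \ST$, distinct $a \neq a'$, and witnessing configurations $x, x' \in \ST^\Z$ with $x_{[m,2m]} = as$, $x'_{[m,2m]} = a's$, and $f(x)_{[0,2m]} = f(x')_{[0,2m]} = bt$. Apply the shift $\sigma^m$ so that the discrepancy sits at coordinate $0$, the shared suffix $s$ sits on $[1, m]$, and the agreement of images spans $[-m, m]$. By compactness of $(\ST^\Z)^2$, extract a convergent subsequence with limit $(y, y')$. Then $y_0 \neq y'_0$, $y_{\itoinfty{1}} = y'_{\itoinfty{1}}$, and $f(y) = f(y')$, contradicting left-closingness of $f$. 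The threshold $m_0$ of the lemma is exactly the $m$ beyond which no such counterexample can survive the limit procedure.

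For existence, I first invoke that every left-closing CA on the full shift is surjective (standard fact: the image is closed and shift-invariant, and the closing property precludes the entropy drop needed for a proper subshift). Fix a partial stair $(s, t)$ and a prescribed $b \in \ST$. The plan is double counting: let $N_m$ be the number of partial stairs $(s, t) \in \ST^m \times \ST^{2m}$ satisfying the hypothesis of the lemma, and $N'_m$ the number of extended stairs $(as, bt) \in \ST^{m+1} \times \ST^{2m+1}$. The forgetful map $(as, bt) \mapsto (s, t)$ lands in partial stairs, and by the already-proved uniqueness each fiber has size at most $|\ST|$, giving $N'_m \leq |\ST|\cdot N_m$. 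I would then establish the matching inequality $N'_m \geq |\ST|\cdot N_m$ using the uniform multiplicity of left-closing CA: every configuration has the same finite number of preimages, which, localized to sufficiently long windows, converts surjectivity onto $\ST^\Z$ into a precise count of windowed pairs $(x_{\text{window}}, f(x)_{\text{window}})$. Equality in the counting then forces every fiber to have size exactly $|\ST|$, so every $b$ is hit by some $a$.

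The main obstacle is this lower bound for $N'_m$: translating global surjectivity into a sharp count of extended stairs requires the uniform-multiplicity property for left-closing CA and a careful choice of window sizes to ensure that preimage counts equal multiplicity on the nose. This is exactly where the "sufficiently large $m$" hypothesis of the lemma is consumed, and where I expect to spend most of the technical work.
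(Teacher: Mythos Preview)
The paper does not prove this lemma at all: it is imported as Proposition~5.44 from K\r{u}rka's textbook and used throughout as a black box, so there is no in-paper argument to compare your proposal against. What follows is therefore just an assessment of your sketch on its own merits.

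Your uniqueness argument is the standard compactness proof and is essentially correct, with one repair you should make explicit. The witnessing configurations $x,x'$ you pick agree on $(m,2m]$ but need not agree on $(2m,\infty)$, so before shifting and extracting a convergent subsequence you must overwrite $x'_{(2m,\infty)}$ by $x_{(2m,\infty)}$. This changes $f(x')$ only on $(2m-r,\infty)$; since after the limit the pair $(y,y')$ already satisfies $y_{\itoinfty{1}}=y'_{\itoinfty{1}}$, the surviving image-agreement on an interval of width growing with $m$ still forces $f(y)=f(y')$, and left-closingness gives the contradiction.

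Your existence plan has the right shape, and you are right to flag the lower bound $N'_m\geq|\ST|\,N_m$ as the crux. The upper bound $N'_m\leq|\ST|\,N_m$ follows from uniqueness exactly as you say, and equality then yields existence by the pigeonhole you describe. The standard way to cash out your ``uniform multiplicity localized to windows'' is indeed Hedlund's theorem that for a surjective CA on a full shift the block map $\ST^{n+2r}\to\ST^n$ is exactly $|\ST|^{2r}$-to-one. But turning that into the exact stair count requires one further use of uniqueness (at a possibly larger radius) to control how many of the $|\ST|^{2r}$ block preimages of a fixed $bt$ share the same restriction to $[m,2m]$; this fiber size is not constant across all left-closing CA without that extra input. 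So the step you identify as ``most of the technical work'' is a genuine piece of bookkeeping rather than a one-liner, and you should expect the threshold on $m$ to involve both the closing radius from the uniqueness half and the CA radius $r$.
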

The condition $f([s]_{(m,2m]})\cap [t]_{(0,2m]} \neq\emptyset$ is just
a way to write that there exists $x\in\ST^\Z$ with $x_{(m,2m]}=s$ and
$f(x)_{(0,2m]}=t$. Note that the statement of the lemma has two parts:
the existence of $a$ and the uniqueness of $a$. We need both parts in
the following.

A number $m$ is a \emph{strong\footnote{The word `strong' is added to
    distinguish this from the weaker closing radius obtained directly
    from the definition by a compactness argument.} left-closing
  radius} for a CA $f$ if it satisfies the claim of
Lemma~\ref{lem:kurka}, and furthermore $m\geq 2r$ where $r\geq 1$ is a
neighborhood radius of $f$. Next we state corollaries of the previous
lemma, phrased for right stairs in place of $s$ and $t$ to be directly
applicable in our setup.

\begin{corollary}
  \label{cor:kurkacorollary}
  Let $f$ be a left-closing CA. Let $m$ be a strong left-closing
  radius.
  \begin{itemize}
  \item[(a)] $\stairs_{3m}(y,z)=\stairs_{3m}$ for all $y$ and $z$.
  \item[(b)] Let $(vc,wd)\in\stairs_{3m}$ for $c,d\in\ST$ and
    $v,w\in \ST^{2m-1}$. For every $b\in \ST$ there exists a unique
    $a\in \ST$ such that $(av,bw)\in\stairs_{3m}$. (See
    Figure~\ref{fig:kurka}(b) for an illustration.)
  \item[(c)] Every $(v,w)\in\stairs_{3m}(y,z)$ is confirmed by a
    unique $x$.
  \end{itemize}
\end{corollary}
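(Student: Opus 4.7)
The plan is to handle the three parts in order of difficulty, with (c) following directly from left-closing, (a) from iterated application of Lemma~\ref{lem:kurka}, and (b) requiring the most care.

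For part (c), if $x, x'$ both confirm $(v, w) \in \stairs_{3m}(y, z)$ (reducing to $i = 0$ by shift-invariance), they agree on $[m, \infty)$ (both equal $vy$) and are hence right-asymptotic, while their images agree on $(-\infty, 2m)$ (both equal $zw$). Agreement of $x, x'$ on $[m, \infty)$ forces $f(x) = f(x')$ on $[m + r, \infty)$; since $m + r \leq 2m$ (as $m \geq 2r$), this combined with the agreement on $(-\infty, 2m)$ covers $\Z$, so $f(x) = f(x')$ globally. Left-closing then gives $x = x'$.

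For part (a), I take any $x$ confirming $(v, w) \in \stairs_{3m}$ and replace its right tail at positions $\geq 3m$ by $y$; this does not affect $f(x)$ on $[0, 2m)$ since $m \geq r$. I then iteratively apply Lemma~\ref{lem:kurka} at shifted positions to prescribe $f$-values at positions $-1, -2, \ldots$ to match the symbols of $z$, using analogs of the shift-computation in (b) to keep $x_{[m, 3m)} = v$ intact throughout. A compactness argument then yields a limit $x^*$ satisfying $x^*_{[m, \infty)} = vy$ and $f(x^*)_{(-\infty, 2m)} = zw$, giving $(v, w) \in \stairs_{3m}(y, z)$.

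For part (b), let $x$ confirm $(vc, wd)$ and consider $\hat x = \sigma^{-1}(x)$, which places $v$ at positions $[m+1, 3m)$ and $w$ at $[1, 2m)$ in its image, with $f(\hat x)_{2m} = d$. Applying Lemma~\ref{lem:kurka} with $s = \hat x_{(m, 2m]}$ (the first $m$ symbols of $v$) and $t = f(\hat x)_{(0, 2m]} = w \cdot d$ gives, for every $b \in \ST$, a unique $a$ and a configuration $x^*$ with $x^*_{[m, 2m]} = a \cdot s$ and $f(x^*)_{[0, 2m]} = b \cdot t$. I then extend $x^*$ on $[2m+1, 3m)$ to the remaining $m-1$ symbols of $v$, obtaining $\tilde x$ with $\tilde x_{[m, 3m)} = av$. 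To check $f(\tilde x)_{[0, 2m)} = bw$ I split into two cases: for $j \leq 2m - r$ the neighborhood $[j-r, j+r]$ lies in $(-\infty, 2m]$ where $\tilde x = x^*$, so $f(\tilde x)_j = f(x^*)_j = (bw)_j$; for $j \in (2m - r, 2m)$ the neighborhood sits inside $[m+1, 3m)$ where $\tilde x$ carries $v$ placed one position to the right of its placement in the original $x$, so $f(\tilde x)_j = f(x)_{j-1} = w_{j-1} = (bw)_j$. For uniqueness, if $x^*, x^{**}$ confirm $(av, bw), (a'v, bw)$, both carry $v$ on $[m+1, 3m)$ and hence agree on $[2m-r, 2m+r] \subseteq [m+1, 3m-1]$ (using $m \geq 2r$), forcing $f(x^*)_{2m} = f(x^{**})_{2m}$. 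Both configurations then witness Lemma~\ref{lem:kurka}'s hypothesis with identical $s$ and $t$, and the lemma's uniqueness gives $a = a'$.

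The main obstacle is the existence half of (b): Lemma~\ref{lem:kurka} applied to a length-$m$ word $s$ only pins down $x^*$ on $[m, 2m]$, whereas the stair $(av, bw)$ needs $x^*_{[m, 3m)} = av$. Extending $x^*$ by $m-1$ positions to complete $v$ a priori risks disturbing $f(x^*)$ in the last $r-1$ cells of $[0, 2m)$. The resolution is the shift-computation above: the completed $\tilde x$ places $v$ at $[m+1, 3m)$, exactly one position to the right of its placement in the original $x$, so the affected $f$-values are the $f(x)$-values shifted by one, which by the hypothesis $(vc, wd) \in \stairs_{3m}$ are precisely the required $w$-values.
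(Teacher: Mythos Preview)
Your proof is correct, but the logical organization differs from the paper's.

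The paper orders the dependencies as (a) $\Rightarrow$ (b) $\Rightarrow$ (c): once (a) is established, existence in (b) is a one-liner---choose $z$ with $z_{-1}=b$, use (a) to get $(vc,wd)\in\stairs_{3m}(y,z)$, and simply read off $a=x_{m-1}$ from the confirming configuration; (c) is then reduced to the uniqueness half of (b) by looking at the leftmost difference between two hypothetical confirmers. You instead prove each part independently. Your argument for (c) is genuinely simpler than the paper's: you observe directly that two confirmers are right-asymptotic with equal images (using $m+r\le 2m$ to cover $\Z$) and invoke left-closingness, bypassing (b) entirely. Conversely, your existence argument for (b) is substantially longer than the paper's, since you build the confirming configuration for $(av,bw)$ by hand---applying Lemma~\ref{lem:kurka} to the shifted data, extending on $[2m{+}1,3m)$, and then verifying $f(\tilde x)_{[0,2m)}=bw$ via your two-case analysis---whereas the paper gets this for free from (a). Your sketch of (a) matches the paper's proof in spirit (replace the right tail, iterate Lemma~\ref{lem:kurka} leftwards, pass to a limit by compactness); note however that the key step there is a \emph{gluing} argument (splice the new left piece produced by Lemma~\ref{lem:kurka} onto the existing right piece over an overlap of length $m$), which the paper writes out carefully and which is not quite the same ``shift-computation'' you use in (b). Finally, in (a) you want to keep $x_{[m,\infty)}=vy$ intact, not merely $x_{[m,3m)}=v$.
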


\begin{proof}
  (a) Let $y,y'\in\ST^{\itoinfty{3m}}$ and
  $z,z'\in\ST^{\minftytoi{0}}$ be arbitrary. It is enough to prove
  that $\stairs_{3m}(y',z')\subseteq \stairs_{3m}(y,z)$. The claim
  then follows from this and shift invariance
  $\stairs_{3m}(y,z)=\stairs_{3m}(\sigma(y),\sigma(z))$.

  First we show that
  $\stairs_{3m}(y',z')\subseteq \stairs_{3m}(y,z')$. Let
  $(v,w)\in\stairs_{3m}(y',z')$ be arbitrary, so that there exists
  $x'\in [vy']_{\itoinfty{m}}$ such that
  $f(x')_{\minftytoi{2m}} = z'w$. Then $(v,w)\in\stairs_{3m}(y,z')$ is
  confirmed by the configuration $x''$ such that
  $x''_{\minftytoi{3m}}=x'_{\minftytoi{3m}}$ and
  $x''_{\itoinfty{3m}}=y$. Indeed, $x''_{\itoinfty{m}}=vy$, and
  because $m\geq r$, the radius of the local rule of $f$, we also have
  $f(x'')_{\minftytoi{2m}}=f(x')_{\minftytoi{2m}} = z'w$.

  Next we show that $\stairs_{3m}(y,z')\subseteq \stairs_{3m}(y,z)$.
  Let $(v,w)\in\stairs_{3m}(y,z')$. We start with finite extensions of
  $w$ on the left: we prove that for every finite word $u\in\ST^*$ we
  have $f([vy]_{[m,\infty)})\cap [uw]_{[-|u|,2m)}\neq \emptyset$.
  Suppose the contrary, and let $bu\in\ST^{k+1}$ be the shortest
  counterexample, with $b\in\ST$ and $u\in\ST^{k}$. (By the
  assumptions, the empty word is not a counterexample.) By the
  minimality of $bu$, there exists $x^r\in [vy]_{[m,\infty)}$ such
  that $f(x^r)_{[-k,2m)}=uw$. Choose $s=x^r_{[-k+m,-k+2m)}$ and
  $t=f(x^r)_{[-k,-k+2m)}$ and apply the existence part of
  Lemma~\ref{lem:kurka}. By the lemma, there exists a configuration
  $x^l$ such that $x^l_{[-k+m,-k+2m)}=x^r_{[-k+m,-k+2m)}$ and
  $f(x^l)_{[-k-1,-k+2m)}=b\cdot f(x^r)_{[-k,-k+2m)}$.

  Consider $x$ obtained by gluing together the left half of $x^l$ and
  the right half of $x^r$: define
  $x_{(-\infty,-k+2m)}=x^l_{(-\infty,-k+2m)}$ and
  $x_{[-k+m,\infty)}=x^r_{[-k+m,\infty)}$. Note that in the region
  $[-k+m,-k+2m)$ configurations $x^l$ and $x^r$ have the same value.
  By applying the local rule of $f$ with radius $r$ we also get that
  $f(x)_{(-k-1,-k+2m-r)}=f(x^l)_{(-k-1,-k+2m-r)}=b\cdot
  f(x^r)_{[-k,-k+2m-r)}$ and
  $f(x)_{[-k+m+r,2m)} =f(x^r)_{[-k+m+r,2m)}$. Because $m\geq 2r$ we
  have $-k+2m-r\geq -k+m+r$, so that
  $f(x)_{(-k-1,2m)}=b\cdot f(x^r)_{(-k,2m)}=buw$. We also have
  $x_{[m,\infty)}=x^r_{[m,\infty)}=vy$, so that $x$ proves that $bu$
  is not a counterexample.

  Consider then the infinite extension of $w$ on the left by $z$:
  Applying the finite case above to each finite suffix of $z$ and by
  taking a limit, we see with a simple compactness argument that there
  exists $x\in [vy]_{[m,\infty)}$ such that $f(x)_{[-\infty,2m)}=zw$.
  This proves that $(v,w)\in\stairs_{3m}(y,z)$.

  \medskip
  
  \noindent
  (b) Let $(vc,wd)\in\stairs_{3m}$ and let $b\in\ST$ be arbitrary. Let
  $y\in\ST^{\itoinfty{3m}}$ be arbitrary, and and let
  $z\in\ST^{\minftytoi{0}}$ be such that $z_{-1}=b$. By (a) we have
  that $(vc,wd)\in\stairs_{3m}(y,z)$. Let $x$ be a configuration that
  confirms this, so $x_{[m,\infty)}=vcy$ and
  $f(x)_{(-\infty,2m)}=zwd$. Let $a=x_{m-1}$. Because
  $x_{[m-1,3m-1)}=av$ and $f(x)_{[-1,2m-1)}=bw$, configuration $x$
  confirms (at position $i=-1$) that $(av,bw)\in\stairs_{3m}$.

  Let us prove that $a$ is unique. Suppose that also
  $(a'v,bw)\in\stairs_{3m}$. We apply the uniqueness part of
  Lemma~\ref{lem:kurka} on $s$ and $t$ where $t=wd$ and $s$ is the
  prefix of $v$ of length $m$. Because $(a'v,bw)$ is a right stair,
  $f([a'v]_{[m-1,3m-1)})\cap [bw]_{[-1,2m-1)}\neq\emptyset$. Because
  $m-1\geq 2r-1\geq r$, the local rule of $f$ assigns $f(x)_{2m-1}=d$
  for all $x\in [a'v]_{[m-1,3m-1)}$, so that
  $f([a'v]_{[m-1,3m-1)})\cap [bwd]_{[-1,2m)}\neq\emptyset$. But then
  $f([a's]_{[m-1,2m)})\cap [bt]_{[-1,2m)}\neq\emptyset$, so that by
  Lemma~\ref{lem:kurka} we must have $a'=a$.

  \medskip
  
  \noindent
  (c) Suppose $x\neq x'$ both confirm that
  $(v',w')\in\stairs_{3m}(y,z)$. Then
  $x_{\itoinfty{m}}=v'y=x'_{\itoinfty{m}}$. Let $k<m$ be the largest
  index such that $x_{k}\neq x'_{k}$. Extract $a,a',b,c,d\in\ST$ and
  $v,w\in\ST^{2m-1}$ from $x$ and $x'$ as follows: $avc=x_{[k,k+2m]}$
  and $a'vc=x'_{[k,k+2m]}$ and
  $bwd=f(x)_{[k-m,k+m]}=f(x')_{[k-m,k+m]}$. Then
  $(vc,wd)\in\stairs_{3m}$ and $(av,bw), (a'v,bw)\in\stairs_{3m}$.
  This contradicts (b).
\end{proof}

Now we can prove another constraint on \arule s.

\begin{lemma}
  \label{lem:a-rule->div}
  Let $f$ be a \arule. Let $m$ be a strong left-closing radius, and
  big enough so that $f$ is defined by a bijective block rule
  $\chi:\ST^{n}\longrightarrow \ST^{n}$ of block length $n=3m$. Let
  $N$ be the number of representatives of configurations (independent
  of the configuration) with respect to $\chi$. Then
  $$N\cdot \card{\stairs_n}=\card{\ST}^n.$$
  In particular, $\card{\stairs_n}$ divides $\card{\ST}^n$.
\end{lemma}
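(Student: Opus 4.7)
The approach is to count words of length $n=3m$ in two different ways by fixing a ``boundary''. Pick any reference configuration $x^*\in\ST^\Z$, set $y^* = x^*_{\itoinfty{n}}$ and $z^* = x^*_{\minftytoi{0}}$, and define
\[
  X = \{\, y\in\ST^\Z \from y_{\itoinfty{n}} = y^*\text{ and } f(y)_{\minftytoi{0}} = z^*\,\}.
\]
I will show separately that $\card{\ST}^n = N\cdot\card{X}$ and that $\card{X} = \card{\stairs_n}$; combined, these give the claim.

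For the first equality, each $u\in\ST^n$ yields $x_u := z^*\cdot u\cdot y^*$, which is a representation at position $0$ of some pair $(\xi^{0-}(x_u),\chi^{0+}(x_u)) = (y, f(y))$, and the definitions of $y^*$ and $z^*$ immediately place $y\in X$. Conversely, Lemma~\ref{lem:repr} shows that any representation at position $0$ of $(y, f(y))$ with $y\in X$ has its underlying configuration of the form $z^*\cdot u\cdot y^*$ for a unique $u\in\ST^n$. Since each $y\in X$ admits exactly $N$ representations at position $0$, summing over $y\in X$ gives $\card{\ST}^n = N\cdot\card{X}$.

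For the second equality, define $\Phi\from X\to\stairs_n$ by $\Phi(y) = (y_{[m,3m)},\, f(y)_{[0,2m)})$. This is well-defined, and in fact $\Phi(y)\in\stairs_n(y^*,z^*)$, since $y$ itself confirms the stair at position $0$. Corollary~\ref{cor:kurkacorollary}(a) gives $\stairs_n = \stairs_n(y^*,z^*)$, so every $(v,w)\in\stairs_n$ has a confirming configuration which automatically lies in $X$ and is a preimage of $(v,w)$ under $\Phi$; hence $\Phi$ is surjective. If $\Phi(y)=\Phi(y')$ for $y,y'\in X$, then both $y$ and $y'$ confirm the same stair in $\stairs_n(y^*,z^*)$ at position $0$, so Corollary~\ref{cor:kurkacorollary}(c) forces $y=y'$ and $\Phi$ is injective.

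The main thing to get right is the alignment of windows and half-infinite tails so that the stair's $v$-part at $[m,3m)$, its $w$-part at $[0,2m)$, and the fixed halves $y^*$ on $\itoinfty{n}$ and $z^*$ on $\minftytoi{0}$ fit together to make confirming configurations automatically land in $X$. Once this bookkeeping is in place, parts (a) and (c) of Corollary~\ref{cor:kurkacorollary}---whose use of the strong left-closing hypothesis is exactly what $m$ was chosen to provide---yield the bijection $\Phi$ with essentially no further work. Combining $\card{\ST}^n = N\cdot\card{X}$ with $\card{X} = \card{\stairs_n}$ gives $N\cdot\card{\stairs_n} = \card{\ST}^n$, whence $\card{\stairs_n}$ divides $\card{\ST}^n$.
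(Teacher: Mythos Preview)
Your proof is correct and follows essentially the same approach as the paper: both fix half-infinite tails $y^*,z^*$, define the set $X$ (called $A$ in the paper) of configurations with those boundary conditions, establish the bijection $X\to\stairs_n(y^*,z^*)=\stairs_n$ via $y\mapsto(y_{[m,3m)},f(y)_{[0,2m)})$ using Corollary~\ref{cor:kurkacorollary}(a,c), and count $|\ST|^n=N\cdot|X|$ by noting that the words $u\in\ST^n$ enumerate exactly the representations at position~$0$ of elements of~$X$. The only cosmetic difference is that you introduce an auxiliary $x^*$ to produce $y^*,z^*$ (which is unnecessary---arbitrary $y^*,z^*$ work just as well), and you present the two counts in the opposite order.
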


\begin{proof}
  Fix any $y\in\ST^{[3m,\infty)}$ and $z\in\ST^{(-\infty,0)}$. Denote
  $A=\{x\in \ST^\Z\ |\ x_{[3m,\infty)}=y \mbox{ and }
  f(x)_{(-\infty,0)}=z\}$. Consider the function
  $A\longrightarrow \stairs_{3m}(y,z)$ defined by
  $x\mapsto (x_{[m,3m)}, f(x)_{[0,2m)})$. It is surjective by the
  definition of $\stairs_{3m}(y,z)$, and it is injective by
  Corollary~\ref{cor:kurkacorollary}(c). Because
  $\stairs_{3m}(y,z)=\stairs_{3m}$ by
  Corollary~\ref{cor:kurkacorollary}(a), we see that
  $\card{A}=\card{\stairs_{3m}}$.

  For each $w\in \ST^{3m}$ define configuration $x^w= zwy$.
  Representations $(x,0)$ of $y\in A$ are precisely $(x^w,0)$ for
  $w\in \ST^{3m}$. Because each $y$ has $N$ representations and there
  are $\card{\ST}^{3m}$ words $w$ we obtain that
  $N\cdot \card{\stairs_{3m}}=\card{\ST}^{3m}$.
\end{proof}

Now we prove the converse: the constraints we have proved for \arule s
are sufficient. This completes the characterization of sliders.

\begin{lemma}
  \label{lem:lcdiv->a-rule}
  Let $f$ be a left-closing cellular automaton, let $m$ be a strong
  left-closing radius, and assume that $\card{\stairs_n}$ divides
  $\card{\ST}^n$ for $n=3m$. Then $f$ is a \arule.
\end{lemma}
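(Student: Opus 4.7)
The plan is to construct explicitly a bijective block rule $\chi : \ST^n \to \ST^n$ of block length $n = 3m$ so that the slider defined by $\chi$ equals $f$. The key combinatorial ingredient is the divisibility $\card{\ST}^n = N \cdot \card{\stairs_n}$, paralleling the counting identity in the proof of Lemma~\ref{lem:a-rule->div}, which I want to use to organize $\ST^n$ into $\card{\stairs_n}$ equivalence classes of size $N$, one per stair, and then define $\chi$ class-by-class.

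Concretely, I would fix reference boundary data $y_* \in \ST^{[n, \infty)}$ and $z_* \in \ST^{(-\infty, 0)}$ for which $A := \{x \in \ST^\Z : x_{[n, \infty)} = y_*,\ f(x)_{(-\infty, 0)} = z_*\}$ is nonempty, so that the stair map bijects $A$ with $\stairs_n$ exactly as in Lemma~\ref{lem:a-rule->div}. The intent is to partition $\ST^n$, viewed as the set of middle words $w$ of candidate representations $x^w := z_* \cdot w \cdot y_*$ at position $0$, into classes $C_{\tilde y}$ of size $N$ indexed by $\tilde y \in A$. To define these classes in terms of $f$ alone (avoiding circular reference to $\chi$), I would use the left-closing uniqueness from Lemma~\ref{lem:kurka}: the tail $\tilde y|_{[n-1, \infty)} = w_{n-1} \cdot y_*$ is forced (since $\xi^{0-}$ preserves positions $\geq n-1$), and iteratively applying the uniqueness in Lemma~\ref{lem:kurka} to the $f$-boundary $z_*$ produces a canonical $\tilde y_w \in A$, with the $N$-fold freedom in $w_{[0, n-1)}$ parametrizing the $N$ middles of each $\tilde y \in A$. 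With the partition in hand, $\chi$ is defined by setting $\chi(w)_0 := f(\tilde y_w)_0$ (the correct output symbol) and choosing $\chi(w)_{[1, n)}$ via any bijection between $C_{\tilde y_w}$ and the analogous size-$N$ class at position $1$ (for the shifted boundaries $y_*|_{[n+1, \infty)}$ and $z_* \cdot f(\tilde y_w)_0$). Corollary~\ref{cor:kurkacorollary}(b) suggests a canonical such bijection: the unique left-extension of stairs provides a natural matching between the two classes.

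The main obstacle is showing that the slider relation defined by $\chi$ is a function and coincides with $f$ on \emph{every} $y \in \ST^\Z$, not just those in $A$, i.e.\ that the construction is boundary-independent. For this I would rely on Corollary~\ref{cor:kurkacorollary}(a), which states that $\stairs_n$ is independent of the boundary data; since the classes are ultimately parametrized by stairs and the between-class action of $\chi$ is given by the canonical left-extension step, the resulting $\chi$ is insensitive to the choice $(y_*, z_*)$. Combined with shift-invariance and the uniqueness aspect of left-closing, this would let me verify that $\chi^{0+}$ applied to the natural representation of an arbitrary $y$ produces $f(y)$, and symmetrically that $\xi^{0-}$ recovers $y$. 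The technical core is this bookkeeping -- in particular, verifying that the bijective in-class choices are globally compatible across all positions and all boundary shifts -- after which the definition in Definition~\ref{def:arule} and Corollary~\ref{cor:arule} deliver $f$ as the slider of $\chi$.
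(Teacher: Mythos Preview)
Your high-level plan---encode a stair together with an index in the block and let $\chi$ advance the stair while emitting one output symbol---is the paper's strategy, but two steps break as written. First, the map $w\mapsto\tilde y_w$ is not well-defined: from $w_{n-1}$ you recover only $\tilde y_{[n-1,\infty)}$, and to extend $\tilde y$ leftward with Lemma~\ref{lem:kurka} you need $f(\tilde y)$ known one position further left at every stage, whereas the segment $f(\tilde y)_{[0,2m)}$ is unavailable from $z_*$ and $w_{n-1}$. The counting already signals this: $w_{n-1}$ takes $|\ST|$ values while $|A|=|\stairs_n|$, and $w_{[0,n-1)}$ takes $|\ST|^{n-1}$ values rather than $N$; these match only in the degenerate case $|\stairs_n|=|\ST|$. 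Second, block length $n$ is one too short for the mechanism you want. The overlap between consecutive windows is only $\ST^{n-1}$, so the ``state'' the sweep carries has $|\ST|^{n-1}$ values; encoding a stair (plus a redundancy index, to keep $\chi$ bijective) in that space would require $|\stairs_n|\mid|\ST|^{n-1}$, which is not implied by the hypothesis $|\stairs_n|\mid|\ST|^{n}$.

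The paper avoids both problems by taking block length $n+1$ and picking an \emph{arbitrary} bijection $\pi:\stairs_n\times\{1,\dots,N\}\to\ST^n$, then setting
\[
\chi\bigl(\pi((av,bw),k)\cdot c\bigr)=b\cdot\pi((vc,wd),k),\qquad d=\loc(avc);
\]
well-definedness and bijectivity follow directly from Corollary~\ref{cor:kurkacorollary}(b). Because no reference boundaries enter the definition of $\chi$, your ``main obstacle'' of boundary-independence never arises. Instead of arguing that the construction is insensitive to $(y_*,z_*)$, the paper verifies the slider relation equals $f$ by attaching to each representation $(x,i)$ the unique confirming configuration $g(x,i)$ of the decoded stair (Corollary~\ref{cor:kurkacorollary}(c)) and checking the invariant $g(\chi^i(x),i+1)=g(x,i)$; sliding this to the right forces $z_p=f(y)_p$ at every position $p$.
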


\begin{proof}
  Let $N=\card{\ST}^n/\card{\stairs_n}$ and pick an arbitrary
  bijection $\pi:\stairs_n\times\{1,2,\dots ,N\}\longrightarrow\ST^n$.
  Let $\loc : \ST^{2m+1}\longrightarrow\ST$ be the local rule of
  radius $m$ for the cellular automaton $f$.

  Let us define a block rule $\chi:\ST^{n+1}\longrightarrow \ST^{n+1}$
  as follows (see Figure~\ref{fig:kurka}). Consider any $c\in\ST$, any
  $k\in\{1,2,\dots ,N\}$ and any $(av,bw)\in \stairs_n$ where
  $a,b\in\ST$ and $v,w\in\ST^{2m-1}$. Let $d = \loc(avc)$. We set
  $\chi: \pi((av,bw),k)\cdot c \mapsto b\cdot \pi((vc,wd),k))$. This
  completely defines $\chi$, but to see that it is well defined we
  next show that $(vc,wd)$ is a right stair. By
  Corollary~\ref{cor:kurkacorollary}(a) we have that
  $(av,bw)\in\stairs_n(cy,z)$ for arbitrary $y,z$ so there is a
  configuration $x$ such that $x_{[m,\infty)}=avcy$ and
  $f(x)_{(\infty,2m)}=zbw$. The local rule $\loc$ determines that
  $f(x)_{2m}=\loc(avc)=d$. It follows that
  $(vc,wd)\in \stairs_n(y,zb)$, confirmed by $x$ at position $i=1$.

  Now that we know that $\chi$ is well defined, let us prove that
  $\chi$ is a bijection. Suppose $\pi((av,bw),k)\cdot c$ and
  $\pi((a'v',b'w'),k')\cdot c'$ have the same image
  $b\cdot \pi((vc,wd),k))=b'\cdot \pi((v'c',w'd'),k'))$. We clearly
  have $b=b'$, and because $\pi$ is a bijection, we have $v=v'$,
  $c=c'$, $w=w'$, $d=d'$ and $k=k'$. By
  Corollary~\ref{cor:kurkacorollary}(a) we also have that $a=a'$.

  As $\chi$ is a bijective block rule, it defines a \arule\ relation
  $F$. We need to prove that for every configuration $y$, the only $z$
  such that $(y,z)\in F$ is $z=f(y)$. Therefore, consider an arbitrary
  representation $(x,i)$ of $(y,z)\in F$. Write
  $x=z_{(-\infty,i)}\cdot \pi((av,bw),k)\cdot c \cdot
  y_{[i+n+1,\infty)}$ for letters $a,c,b\in\ST$ words
  $v,w\in \ST^{2m-1}$ and $k\in\{1,2,\dots ,N\}$. This can be done and
  as $\pi$ is surjective and all items in this representation are
  unique as $\pi$ is injective. We have $(av,bw)\in\stairs_n(cy,z)$ so
  by Corollary~\ref{cor:kurkacorollary}(c) there is a unique
  configuration $x'$ that confirms this. Then
  $x'_{[i+m,\infty)} = avc\cdot y_{[i+n+1,\infty)}$ and
  $f(x')_{(-\infty, i+2m)} = z_{(-\infty,i)}\cdot bw$. Associate $x'$
  to $(x,i)$ by defining $g(x,i)=x'$.

  Let us show that $g(\chi^i(x),i+1)=g(x,i)$. By the definition of
  $\chi$ we have
  \[
    \chi^i(x)=z_{(-\infty,i)}\cdot b\cdot \pi((vc,wd),k))\cdot y_{[i+n+1,\infty)}
  \]
  where $d=\loc(avc)$. To prove that $g(\chi^i(x),i+1)=x'=g(x,i)$ it
  is enough to show that $x'$ confirms $(vc,wd)\in\stairs_n(y,zb)$.
  But this is the case because
  $x'_{[i+m+1,\infty)} = vc\cdot y_{[i+n+1,\infty)}$ and
  $f(x')_{(-\infty, i+2m+1)} = z_{(-\infty,i)}\cdot bwd$. The fact
  that $f(x')_{i+2m}=d$ follows from $x'_{[i+m,i+3m]}=avc$ and
  $d=\loc(avc)$.

  By induction we have that for any $j\geq i$ holds
  $g(\chi^{[i,j)}(x),j)=x'$. Moreover, pair $(\chi^{[i,j)}(x),j)$
  represents the same $(y,z)\in F$ as $(x,i)$. Therefore,
  $x'_{[j+n+1,\infty)}=y_{[j+n+1,\infty)}$ and
  $f(x')_{(-\infty, j)} = z_{(-\infty,j)}$ for all $j\geq i$. Let us
  look into position $p=i+n+m+1$. Using any $j>p$ we get $f(x')_p=z_p$
  and using $j=i$ we get $x'_{[p-m,p+m]}=y_{[p-m,p+m]}$. This means
  that $z_p=\loc(y_{[p-m,p+m]})$, that is, $z_p=f(y)_p$. Because $i$
  was arbitrary, $p$ is arbitrary. We have that $z=f(y)$, which
  completes the proof.
\end{proof}

\begin{theorem}
\label{thm:SliderCharacterization}
The function $f$ admits a slider if and only if $f$ is a left-closing
cellular automaton and $\card{\stairs_n}$ divides $|\ST|^n$ for
$n = 3m$ where $m$ is the smallest strong left-closing radius.
\end{theorem}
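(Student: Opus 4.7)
The theorem is a direct synthesis of the three main structural lemmas established above: Lemma~\ref{lem:a-rule->lc} (sliders are left-closing), Lemma~\ref{lem:a-rule->div} (sliders yield the divisibility), and Lemma~\ref{lem:lcdiv->a-rule} (left-closing plus divisibility yields a slider).

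For the $(\Leftarrow)$ direction I would simply invoke Lemma~\ref{lem:lcdiv->a-rule}: assuming $f$ is left-closing and $\card{\stairs_n}$ divides $|\ST|^n$ for $n = 3m$ where $m$ is the smallest strong left-closing radius, the lemma directly constructs a bijective block rule of length $n+1$ that realizes $f$ as a slider.

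For the $(\Rightarrow)$ direction, suppose $f$ is a slider, defined by some bijective block rule $\chi : \ST^{n_0} \to \ST^{n_0}$. Lemma~\ref{lem:a-rule->lc} immediately yields left-closingness. To get the divisibility, my plan is to first observe that block rules defining $f$ can be realized at arbitrarily large lengths: given $\chi$, the padded rule $\chi_k : \ST^{n_0 + k} \to \ST^{n_0 + k}$ with $\chi_k(w \cdot u) = \chi(w) \cdot u$ for $w \in \ST^{n_0}$ and $u \in \ST^k$ is bijective and, after a trivial reindexing of the sweep, defines the same slider $f$. Applying Lemma~\ref{lem:a-rule->div} to any strong left-closing radius $m'$ with $3m' \geq n_0$ then yields that $\card{\stairs_{3m'}}$ divides $|\ST|^{3m'}$.

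The main technical obstacle is that the smallest strong left-closing radius $m$ may satisfy $3m < n_0$, so the above does not directly give divisibility at $n = 3m$. To bridge this, I would prove that the divisibility of $|\ST|^{3m}$ by $\card{\stairs_{3m}}$ is preserved when $m$ is increased to the next strong left-closing radius, so that divisibility at some large $m'$ transfers back to the smallest $m$. The intended tool is Corollary~\ref{cor:kurkacorollary}(b): its uniform one-letter extension property for right stairs should yield a precise multiplicative relationship between $\card{\stairs_{3m}}$ and $\card{\stairs_{3(m+1)}}$ whose factor is a power of $|\ST|$, by a counting bijection that pairs each stair of one length (suitably decorated with a fixed number of letters from $\ST$) with a stair of the next length. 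Setting up this counting bijection cleanly is where I expect the main work of the proof to lie; once it is in hand, the divisibility condition at the smallest strong left-closing radius follows, completing the characterization.
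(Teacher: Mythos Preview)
Your proposal is correct and follows the same route as the paper, which states the theorem without proof as a summary of Lemmas~\ref{lem:a-rule->lc}, \ref{lem:a-rule->div} and~\ref{lem:lcdiv->a-rule}. You are right to flag the technical point that Lemma~\ref{lem:a-rule->div} only yields divisibility at a strong left-closing radius $m'$ with $3m'$ at least the block length of some sliding rule, whereas the theorem asks for it at the \emph{smallest} strong left-closing radius; the paper does not address this inside the theorem but handles it immediately afterward by observing (citing Corollary~\ref{cor:kurkacorollary}) that the ratio $\card{\stairs_{3m}}/\card{\ST}^{3m}$ is constant once $m$ is a strong left-closing radius.

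Your padding argument to enlarge the block length is fine. For the transfer of divisibility, the cleanest tool is not part~(b) of Corollary~\ref{cor:kurkacorollary} but parts~(a) and~(c), via the first paragraph of the proof of Lemma~\ref{lem:a-rule->div} (which uses only that $m$ is a strong left-closing radius, nothing about the block rule): one has $\card{\stairs_{3m}}=\card{A_m}$ where $A_m=\{x: x_{[3m,\infty)}=y,\ f(x)_{(-\infty,0)}=z\}$, and since this holds for \emph{every} choice of $y$, fibering $A_{m+1}$ over $x_{[3m,3m+3)}\in\ST^3$ gives $\card{A_{m+1}}=\card{\ST}^3\cdot\card{A_m}$, hence $\card{\stairs_{3(m+1)}}/\card{\ST}^{3(m+1)}=\card{\stairs_{3m}}/\card{\ST}^{3m}$. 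Part~(b) concerns sliding a stair within a fixed length and does not directly compare $\stairs_{3m}$ with $\stairs_{3(m+1)}$.
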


We can state this theorem in a slightly more canonical (but completely
equivalent) form by normalizing the length of stairs:

By Corollary~\ref{cor:kurkacorollary}, for a left-closing cellular
automaton $f$ the limit
\begin{equation}
  \label{eq:lambda}
  \lambda_f = \lim_{m\to\infty} \frac{\card{\stairs_{3m}}}{\card{\ST}^{3m}}
\end{equation}
is reached in finite time, namely as soon as $m$ is a left-closing
radius, and thus $\lambda_f$ is rational for left-closing $f$. In
\cite{blockCA} it is shown that the map $f \mapsto \lambda_f$ gives a
homomorphism from the group of reversible cellular automata into the
rational numbers under multiplication. For a prime number $p$ and an
integer $n$, write $v_p(n)$ for the largest exponent $k$ such that
$p^k | n$. For prime $p$ and rational $r = m/n$, write
$v_p(r) = v_p(m) - v_p(n)$ for the \emph{$p$-adic valuation} of $r$.

\begin{theorem}
  \label{thm:SliderCharacterization2}
  The function $f$ is a slider if and only if $f$ is a
  left-closing cellular automaton and $v_p(\lambda_f) \leq 0$ for all
  primes $p$.
\end{theorem}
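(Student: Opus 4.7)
The plan is to deduce Theorem~\ref{thm:SliderCharacterization2} directly from Theorem~\ref{thm:SliderCharacterization} by reformulating the divisibility condition in $p$-adic language.

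First I would appeal to Theorem~\ref{thm:SliderCharacterization}: $f$ is a slider iff $f$ is left-closing and $\card{\stairs_n}$ divides $\card{\ST}^n$ for $n=3m$, where $m$ is the smallest strong left-closing radius. Under the left-closing hypothesis, the remark following equation~(\ref{eq:lambda}) says that the sequence $\card{\stairs_{3m}}/\card{\ST}^{3m}$ stabilizes as soon as $m$ is a (strong) left-closing radius. Hence for the smallest strong left-closing radius $m$ we have the identity $\lambda_f = \card{\stairs_{3m}}/\card{\ST}^{3m}$; in particular $\lambda_f$ is a positive rational number, positivity coming from the fact that any configuration $x$ already yields the stair $(x_{[m,3m)}, f(x)_{[0,2m)})$.

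Next I would chase a short chain of equivalences. Since $\card{\stairs_{3m}}$ and $\card{\ST}^{3m}$ are positive integers whose ratio equals $\lambda_f$, the divisibility $\card{\stairs_{3m}} \mid \card{\ST}^{3m}$ is equivalent to $1/\lambda_f$ being a positive integer. Writing $\lambda_f = a/b$ in lowest terms with $a,b\geq 1$, this is in turn equivalent to $a = 1$. Finally, because $\gcd(a,b) = 1$ forces, for every prime $p$, that at most one of $v_p(a)$ and $v_p(b)$ is nonzero, the equality $a = 1$ is the same as $v_p(a) = 0$ for all primes $p$, i.e.\ $v_p(\lambda_f) = v_p(a) - v_p(b) \leq 0$ for all $p$. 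Stringing the equivalences together gives exactly the statement of the theorem.

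There is essentially no obstacle: the mathematical content lives entirely in Theorem~\ref{thm:SliderCharacterization}, and the present statement is just its cleaner repackaging in terms of the invariant $\lambda_f$. The only point worth checking is that the smallest strong left-closing radius is already past the stabilization threshold for the limit in (\ref{eq:lambda}), which is immediate from Corollary~\ref{cor:kurkacorollary}(a) together with the definition of a strong left-closing radius.
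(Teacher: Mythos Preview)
Your proposal is correct and follows exactly the route the paper intends: the paper itself gives no separate proof of Theorem~\ref{thm:SliderCharacterization2}, merely asserting that it is a ``completely equivalent'' restatement of Theorem~\ref{thm:SliderCharacterization}, and you have supplied the elementary $p$-adic bookkeeping that justifies this equivalence. The one point you flag---that the limit in~\eqref{eq:lambda} has already stabilized at the smallest strong left-closing radius---is precisely what the paper claims in the sentence following~\eqref{eq:lambda}, so you are on solid ground.
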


\begin{example}
  Let $A = \{0,1\} \times \{0,1,2\}$ and write $\sigma_2$ and
  $\sigma_3$ for the left shifts on the two tracks of $A^\Z$. Then
  consider $f = \sigma_2 \times \sigma_3^{-1}$. For this CA we have by
  a direct computation $|\stairs_3| = 2^2 \cdot 3^4$ so
  $\lambda_f = 2^2 \cdot 3^4 / 6^3$ so $v_3(\lambda_f) = 1 > 0$, and
  thus $f$ is not a slider. Similarly we see that
  $\sigma_3 \times \sigma_2^{-1}$ is not a slider.
\end{example}

\begin{example}
  \label{ex:LeftXOR}
  Let $S=\{0,1\}$ and consider the exclusive-or CA with neighborhood
  $\{-1,0\}$, i.e. $f(x) = x + \sigma^{-1}(x)$. Then $f$ is
  left-closing but a direct computation shows
  $v_2(\lambda_f) = 1 > 0$, so $f$ is not a slider. Compare with
  Example~\ref{ex:RightXOR}.
\end{example}

\subsection{Definition of \brule s}

An alternative approach not requiring bijectivity of $\chi$ is
specified in the following:

\begin{definition}
  \label{def:brelation}
  A block rule $\chi$
  defines a \emph{\brule\ relation} $F \subset \ST^\Z \times \ST^\Z$ by
  $(y, z) \in F$ iff some subsequence of $\chi^{0+}(y), \chi^{-1+}(y), \chi^{-2+}(y),\dots$ converges to $z$.
\end{definition}

\begin{lemma}
  \label{lem:SweepingBasics}
  The projection $(y,z)\mapsto y$ on the first component maps a
  \brule\ relation $F$ surjectively onto $\ST^\Z$. The relation $F$ is
  a function $f$ if and only if for each configuration $y$ the limit
  $\lim_{i\to-\infty} \toright{\chi}(y)$ exists and equals $f(y)$.
\end{lemma}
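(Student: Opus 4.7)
The plan is to exploit compactness of $\ST^\Z$ (with the product topology, since $\ST$ is finite) throughout, since the definition of $F$ is tailor-made for subsequential-limit arguments.

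For the first claim, I would fix an arbitrary $y \in \ST^\Z$ and consider the sequence $(\chi^{-k+}(y))_{k \geq 0}$. Since $\ST^\Z$ is sequentially compact, this sequence has a convergent subsequence, and by Definition~\ref{def:brelation} its limit $z$ satisfies $(y,z) \in F$. Hence the first-component projection is surjective.

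For the second claim, the backward direction is essentially immediate: if $\lim_{i \to -\infty} \chi^{i+}(y)$ exists and equals $f(y)$, then every subsequence of $\chi^{0+}(y), \chi^{-1+}(y), \ldots$ converges to $f(y)$, so the only $z$ with $(y,z) \in F$ is $f(y)$, and hence $F$ is the function $f$. For the forward direction, suppose $F$ is a function and fix $y$ with $f(y) = z$. By Definition~\ref{def:brelation}, the set of subsequential limits of $(\chi^{-k+}(y))_k$ equals $\{z' : (y,z') \in F\} = \{z\}$. I would then invoke the standard fact that in a sequentially compact space a sequence with a unique accumulation point converges to it: if $(\chi^{-k+}(y))_k$ did \emph{not} converge to $z$, some cylinder neighborhood $U$ of $z$ would be avoided by infinitely many terms, and by sequential compactness those terms would contain a further convergent sub-subsequence with limit $z' \in \ST^\Z \setminus U$, giving $(y, z') \in F$ with $z' \neq z$, contradicting that $F$ is a function.

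There is no real obstacle here; both parts are pure topology, and the only care required is to match the definition's subsequential formulation to the stronger full-sequence convergence, which is exactly where sequential compactness of $\ST^\Z$ is used. No properties of $\chi$ beyond the existence of the iterates $\chi^{i+}(y)$ (guaranteed by the remark following the definition of $\toright{\chi}$) are needed.
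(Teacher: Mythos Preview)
Your proposal is correct and essentially identical to the paper's own proof: both use sequential compactness of $\ST^\Z$ to extract a convergent subsequence for surjectivity, and both reduce the ``$F$ is a function'' part to the standard fact that in a compact metric space a sequence converges if and only if it has a unique accumulation point. The only cosmetic difference is that the paper phrases the forward direction contrapositively (if the limit fails to exist, pick two distinct subsequential limits $z_1 \neq z_2$), whereas you argue directly; the content is the same.
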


\begin{proof}
  For every $y\in\ST^\Z$ the sequence
  $\chi^{0+}(y), \chi^{-1+}(y), \chi^{-2+}(y),\dots$ has a converging
  subsequence with some limit $z$. Then $(y,z)\in F$ so the projection
  is onto.

  If $z=\lim_{i\to-\infty} \toright{\chi}(y)$ exists then every
  subsequence of $\chi^{0+}(y), \chi^{-1+}(y), \chi^{-2+}(y),\dots$
  converges to $z$ so $z$ is the unique configuration such that
  $(y,z)\in F$. Conversely, if $\lim_{i\to-\infty} \toright{\chi}(y)$
  does not exist then
  $\chi^{0+}(y), \chi^{-1+}(y), \chi^{-2+}(y),\dots$ has two
  subsequences converging to distinct $z_1$ and $z_2$. In this case
  $(y,z_1)$ and $(y,z_2)$ are both in relation $F$. 
\end{proof}

\begin{definition}
  \label{def:brule}
  Let $\chi$ be a block rule such that for each configuration $y$ the
  limit $z= \lim_{i\to-\infty} \toright{\chi}(y)$ exists. The function
  $y\mapsto z$ is called the \emph{\brule}\ defined by~$\chi$.
\end{definition}

Before we are going to compare the notions of sliders and sweepers we
provide a result on a special kind of Mealy automata.

\subsection{A note on finite Mealy automata}
\label{subsec:mealy}

In this section we consider Mealy automata with a set $Q$ of states
and where the set $A$ of input symbols and the set of output symbols
coincide.
For convenience instead of pairs of elements we use words of length $2$.
Thus, we denote by $\mealy\from Q A \to A Q$ the function mapping the
current state $q$ and an input symbol $a$ to $\mealy(qa)=a'q'$, where
$q'$ is the new state of the automaton.

The motivation for this is the following.
When a block rule $\chi$ is sweeping over a configuration one can think
of the block $q\in \ST^n$ where $\chi$ will be applied next as
encoding the state of a Mealy automaton.
The word $a\in\ST^n$ immediately to the right of it is the next input
symbol.
By applying $\chi$ at positions $0,1,\dots,n-1$ the word $qa$ is
transduced into a word $a'q'\in\ST^{2n}$ where $a'$ can be
considered the output symbol and $q'$ the next state of the automaton.
When $\chi$ is bijective then clearly $\mealy$ is bijective, too.

Let $\delta:QA\to Q$ denote the function yielding only the new state
of the Mealy automaton.
The extension $\delta^*\from QA^*\to A^*Q$ to input \emph{words} is
for all states $q$, all inputs $w\in A^*$ and $a\in A$ defined by
$\delta^*(q\varepsilon)=q$ and $\delta^*(qwa)=\delta(\delta^*(qw)a)$.

Because of the application we have in mind we now restrict ourselves
to the case where $Q=A$ and speak of elements $e\in Q$.
Let $\bar{e}=(\ldots,e_{-2},e_{-1},e_0)$ denote a sequence of elements
which is infinite to the left.

\begin{definition}
  A finite tail $\bar{e}_i=(e_{-i},\ldots,e_0)$ of $\bar{e}$ is
  \emph{good for $q$} if $\delta^*(\bar{e}_i)=q$.
  An infinite sequence $\bar{e}$ is \emph{good for $q$} if infinitely
  many finite tails $\bar{e}_i=(e_{-i},\ldots,e_0)$ are good for $q$.

  A \emph{state $q$ is good}, if there is an infinite sequence
  $\bar{e}$ that is good for $q$.
  Let $G\subseteq Q$ denote the set of good states and $B\subseteq Q$
  the set of bad states.
\end{definition}

\begin{lemma}
  \label{lem:mealy}
  If $\mealy$ is bijective then $G=Q$ and $B=\emptyset$.
\end{lemma}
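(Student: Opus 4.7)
The plan is to view the Mealy machine as a finite directed multigraph $\Gamma$ on vertex set $Q$ with an edge $q \xrightarrow{a} \delta(q,a)$ for each $(q,a) \in Q\times A$, and to exploit the regularity that bijectivity of $\mealy$ forces on this graph. First I would check that every vertex of $\Gamma$ has in-degree equal to out-degree, both equal to $|A|=|Q|$: out-degree is immediate, and for in-degree, each pair $(a',q')\in A\times Q$ produces a unique $(q,a)=\mealy^{-1}(a'q')$ contributing one in-edge to $q'$, which gives $|A|$ in-edges when summed over $a'$. A standard flow argument then shows that every weakly connected component of $\Gamma$ is strongly connected: fixing $v$ in a component and letting $R$ be the set of vertices reachable from $v$, the total in-degree of $R$ inside $R$ equals its total out-degree, forcing no edges to enter $R$ from outside, so by weak connectivity $R$ exhausts the component.

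Next, for a fixed $q\in Q$ I would build a good sequence $\bar{e}$ out of a constant $q$-prefix followed by a short ``finishing word''. Define $T_q\colon Q\to Q$ by $T_q(s)=\delta(s,q)$; since $Q$ is finite, the orbit $q,T_q(q),T_q^2(q),\ldots$ is eventually periodic and enters some cycle $\mathcal{O}\subseteq Q$. Every iterate $T_q^n(q)$ is reached from $q$ by an all-$q$-labeled walk in $\Gamma$, so $\mathcal{O}$ lies in the strongly connected component $C$ of $q$. Pick any $c'\in\mathcal{O}$; by strong connectivity of $C$ there is a directed walk from $c'$ to $q$ in $\Gamma$ whose label sequence I call $w=w_1\cdots w_m$ (taking $m=0$ and $w=\varepsilon$ if $c'=q$), so that $\delta^*(c'w)=q$. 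Then I would define $\bar{e}$ by $e_j=q$ for $j\leq -m$ and $e_{-m+k}=w_k$ for $1\leq k\leq m$.

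Verification is the short third step. For each $i\geq m$, the tail $\bar{e}_i$ begins with $e_{-i}=q$ as initial state, continues with $i-m$ further copies of $q$ bringing the state to $T_q^{i-m}(q)$, and ends by reading $w$, so
\[
  \delta^*(\bar{e}_i)=\delta^*\bigl(T_q^{i-m}(q)\cdot w\bigr).
\]
Since $c'\in\mathcal{O}$, there are infinitely many $n$ with $T_q^n(q)=c'$, hence infinitely many $i$ with $\delta^*(\bar{e}_i)=\delta^*(c'w)=q$. Thus $\bar{e}$ is good for $q$, and since $q$ was arbitrary, $G=Q$ and $B=\emptyset$.

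The step that needs the most care is recognising that the naive attempt $\bar{e}=\ldots ccc$ is genuinely insufficient: for bijective $\mealy$ one can arrange that for every $c\in Q$ the eventual cycle of $c$ under $T_c$ avoids a designated $q$, so constant sequences alone need not work. The graph-theoretic balance coming from bijectivity of $\mealy$ is exactly what rescues things, via strong connectivity of components and the associated finishing word $w$ that redirects any element of $\mathcal{O}$ back to $q$.
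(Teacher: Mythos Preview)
Your proof is correct but follows a genuinely different route from the paper's. The paper argues indirectly: it first observes that goodness is preserved by $\delta$ (if $\bar{e}$ is good for $g$ then $\bar{e}a$ is good for $\delta(ga)$), so $\mealy(GA)\subseteq AG$; bijectivity of $\mealy$ together with $|GA|=|AG|$ upgrades this to $\mealy(GA)=AG$, whence $\mealy(BA)\subseteq AB$ and $\delta$ also preserves badness. A contradiction then follows from any bad state $b$: the constant sequence $(\ldots,b,b,b)$ has all tail-states $\delta^*(b^{i})$ bad, yet by finiteness some such state recurs infinitely often and is therefore good. Your approach is instead a direct construction: from bijectivity of $\mealy$ you extract that the transition multigraph is balanced (in-degree equals out-degree at every vertex), hence each weak component is strongly connected, and for each $q$ you explicitly build a good sequence as a constant-$q$ left tail followed by a finishing word $w$ supplied by strong connectivity. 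The paper's argument is shorter and needs no graph language; yours is constructive and yields an explicit witness $\bar{e}$ for every $q$, and the last paragraph correctly explains why a purely constant sequence would not suffice in general.
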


\begin{proof}
  First, observe that the property of being good is preserved by
  $\delta$. If $g$ is good, then each $\delta(ga)$ is good, too:
  If $\bar{e}$ is good for $g$, then $\bar{e}a$ is good for
  $\delta(ga)$ since $\delta^*(e_{-i},\ldots,e_0)=g$ implies
  $\delta^*(e_{-i},\ldots,e_0,a)=\delta(ga)$.
  This means that $\mealy(GA)\subseteq AG$.

  Since $\mealy$ is injective and $\card{GA}=\card{AG}$, in fact
  $\mealy(GA)= AG$.
  Therefore $\mealy(BA)\subseteq AB$, that is $\delta$ preserves bad
  states.
  Now, assume that there indeed exists a bad state $b\in B$.
  Consider $\bar{b}=(\ldots, b,b,b)$.
  The states $b_i=\delta^*(b^i)$ are all bad, but at least one of them
  happens infinitely often, which would mean that it is good.
  Contradiction.
\end{proof}

\subsection{Relation between sliders and sweepers}
\label{subsec:a-rule-b-rule}

Compared to definition~\ref{def:arule} the advantage of
definition~\ref{def:brule} is that it does not require $\chi$ to be
bijective.
But as long as $\chi$ is bijective, there is in fact no difference.

\begin{theorem}
\label{thm:SweeperCharacterization}
  Let $\chi$ be a bijective block rule and $f$ a one-dimensional CA.
  The slider relation defined by $\chi$ is equal to $f$ if and only if
  the sweeper relation it defines is equal to $f$.
\end{theorem}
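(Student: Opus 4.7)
The plan is to prove the stronger statement that the slider relation $F_s$ and the sweeper relation $F_w$ coincide as subsets of $\ST^\Z\times\ST^\Z$ whenever $\chi$ is bijective. Once $F_s=F_w$ is established, the biconditional $F_s=f\Leftrightarrow F_w=f$ is immediate.

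For $F_w\subseteq F_s$, suppose $(y,z)\in F_w$ is witnessed by a subsequence $\chi^{i_k+}(y)\to z$ with $i_k\to-\infty$. I would set $x_k=\chi^{[i_k,0)}(y)$; bijectivity of $\chi$ then gives $\xi^{[i_k,0)^R}(x_k)=y$, and by construction $\chi^{0+}(x_k)=\chi^{i_k+}(y)\to z$. Compactness of $\ST^\Z$ yields a sub-subsequence along which $x_k\to x$; continuity of $\chi^{0+}$ immediately gives $\chi^{0+}(x)=z$. To see $\xi^{0-}(x)=y$, fix any position $p$: for $k$ sufficiently large the further $\xi$-applications making up $\xi^{0-}$ at positions $<i_k$ do not affect $p$, so $\xi^{0-}(x_k)_p=\xi^{[i_k,0)^R}(x_k)_p=y_p$, and continuity of $\xi^{0-}$ transfers this to $\xi^{0-}(x)_p=y_p$. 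Thus $(x,0)$ represents $(y,z)$, and $(y,z)\in F_s$.

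For $F_s\subseteq F_w$, take $(y,z)\in F_s$ with representation $(x,0)$ and set $y^{(k)}=\xi^{[-k,0)^R}(x)$. Using $\chi^{[-k,0)}\circ\xi^{[-k,0)^R}=\mathrm{id}$ together with Lemma~\ref{lem:reprlemma}, one checks $y^{(k)}\to y$ and $\chi^{-k+}(y^{(k)})=z$ for every $k$. The task is to exhibit a subsequence of $\chi^{i+}(y)$ converging to $z$. The main obstacle is that $y$ and $y^{(k)}$ can still disagree on the initial block $[-k,-k+n)$ where the sweep starts, and bijectivity of $\chi$ makes the associated Mealy automaton information-preserving, so this initial disagreement is not obviously absorbed. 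To handle it I would invoke Lemma~\ref{lem:mealy}: viewing $\chi$ as a bijective Mealy automaton on $Q=\ST^n$, every state is good, and a pigeon-hole argument on the finite set of possible Mealy states, combined with a diagonal extraction over base positions, produces a subsequence $k_j\to\infty$ along which the Mealy state realised at every fixed position by sweeping $y$ agrees with the one realised by sweeping $y^{(k_j)}$. Because $y$ and $y^{(k)}$ already coincide on $[-k+n,\infty)$, matching Mealy states forces equality of outputs, giving $\chi^{-k_j+}(y)_p=\chi^{-k_j+}(y^{(k_j)})_p=z_p$ for each $p$ and $j$ large enough; hence $\chi^{-k_j+}(y)\to z$ and $(y,z)\in F_w$.

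The second inclusion is the delicate step and is where Lemma~\ref{lem:mealy} is essential; the first is a fairly direct compactness--continuity argument. The biconditional in the theorem is then an immediate consequence of $F_s=F_w$.
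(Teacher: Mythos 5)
Your first inclusion, $F_w \subseteq F_s$, is correct: the configurations $x_k=\chi^{[i_k,0)}(y)$ are representations at position $0$ of the pairs $(y,\chi^{i_k+}(y))$, a convergent sub-subsequence exists by compactness, and since each coordinate of $\chi^{0+}(\cdot)$ and of $\xi^{0-}(\cdot)$ depends on only finitely many input coordinates, the limit $x$ represents $(y,z)$. This is essentially the paper's Lemma~\ref{lem:not-b=>not-a} recast as a compactness argument, and together with the totality of the sweeper relation (Lemma~\ref{lem:SweepingBasics}) it already yields one direction of the theorem: if $F_s=f$ then $F_w\subseteq F_s=f$ and $F_w$ is total, so $F_w=f$.

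The other inclusion is false, so the ``stronger statement'' $F_s=F_w$ cannot be the route. Take $\ST=\{0,1\}$ and the bijective (indeed involutive) block rule $\chi(a,b)=(a,a+b)$. Sweeping $y=0^\Z$ from any starting position changes nothing, so the only pair in $F_w$ with first component $0^\Z$ is $(0^\Z,0^\Z)$. But the configuration $x$ with $x_p=1$ for $p\le 0$ and $x_p=0$ for $p>0$ satisfies $\xi^{0-}(x)=0^\Z$ and $\chi^{0+}(x)=1^\Z$, so $(0^\Z,1^\Z)\in F_s\setminus F_w$. This pinpoints exactly where your sketch breaks: you want a subsequence along which ``the Mealy state realised by sweeping $y$ agrees with the one realised by sweeping $y^{(k_j)}$,'' but the sweep of $y$ itself may simply never visit the state $w=x_{[0,n)}$ that produces $z$ (here the sweep of $0^\Z$ is stuck in the state $0$ forever, while producing $1^\Z$ requires reaching the state $1$). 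Lemma~\ref{lem:mealy} gives no control over the states visited when sweeping the \emph{fixed} input $y$; it only guarantees that every state is reached infinitely often along \emph{some} left-infinite input. That is precisely how the paper uses it in Lemma~\ref{lem:not-a=>not-b}: from a bad pair in $F_s$ with representation $x=z_{\minftytoi{i}}\cdot w\cdot y_{\itoinfty{i+n}}$ it builds a \emph{different} configuration $v\cdot y_{\itoinfty{i+n}}$ whose left tail $v$ is chosen so that the sweep enters state $w$ at position $i$ for infinitely many starting points, and hence outputs the wrong symbol at $j$ infinitely often; the witness that the sweeper is not $f$ is this new configuration, not $y$. So your first inclusion stands as written, but the second direction genuinely requires this change of configuration and cannot be obtained by showing $(y,z)\in F_w$.
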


The two implications are considered separately in
Lemmata~\ref{lem:not-b=>not-a} and~\ref{lem:not-a=>not-b} below.
For the remainder of this section let $\chi\from \ST^n \to\ST^n $
always denote a bijective block rule and let
$f\from\ST^{\Z}\to\ST^{\Z}$ denote a one-dimensional CA (without
stating this every time).

\begin{lemma}
  \label{lem:not-b=>not-a}
  If $\chi$ is not a sweeper for $f$ then it is not a slider for
  $f$.
\end{lemma}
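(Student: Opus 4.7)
My plan is to prove the contrapositive: assuming the slider relation defined by $\chi$ is the function $f$, I show that the sweeper relation defined by $\chi$ is also $f$. By Lemma~\ref{lem:SweepingBasics}, this amounts to verifying that for every configuration $y\in\ST^\Z$ the limit $\lim_{i\to-\infty}\chi^{i+}(y)$ exists and equals $f(y)$.

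The central observation is that, by the remark following Definition~\ref{def:arelation}, every pair $(x,i)\in\ST^\Z\times\Z$ is itself a representation of $(\xi^{i-}(x),\chi^{i+}(x))$ in the slider relation. Applying this with $x=y$ and using that the slider relation equals $f$, I obtain the identity
\[
\chi^{i+}(y)\;=\;f\bigl(\xi^{i-}(y)\bigr)
\]
for every $i\in\Z$. This reduces the problem to understanding the limit of $\xi^{i-}(y)$ as $i\to-\infty$.

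To handle that limit, I would argue exactly as in the proof of Lemma~\ref{lem:repr}: the operator $\xi^{i-}$ applies $\xi$ only at positions $j\leq i-1$, and each such application modifies only cells in $[j,j+n)$; so $\xi^{i-}(y)$ agrees with $y$ on $[i+n-1,\infty)$. As $i\to-\infty$ this agreement region eventually covers any bounded window, giving $\xi^{i-}(y)\to y$ in the product topology. Since $f$ is a cellular automaton by Corollary~\ref{cor:arule} and hence continuous, $f(\xi^{i-}(y))\to f(y)$, and combining with the identity above yields $\chi^{i+}(y)\to f(y)$, as required.

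I do not anticipate any essential obstacle: the whole proof is almost mechanical once one recognises that feeding $x=y$ into the slider definition produces the sweeper quantity $\chi^{i+}(y)$ on the left and a small, explicitly controlled perturbation $\xi^{i-}(y)$ of $y$ on the right, so that continuity of $f$ closes the gap.
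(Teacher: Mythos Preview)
Your argument is correct, and it is genuinely different from---and markedly shorter than---the paper's proof. The paper argues directly: assuming the sweeper relation is not $f$, it fixes a cell $j$ and a wrong value $s$ that occurs for infinitely many starting positions $i$, then uses a pigeonhole extraction on the window contents $v^{(k)}$ to build an explicit configuration $x$ with $\xi^{j-}(x)=y$ but $\chi^{j+}(x)_j=s\neq f(y)_j$, witnessing that the slider relation is not $f$. Your contrapositive avoids this construction entirely by exploiting the observation (stated right after Definition~\ref{def:arelation}) that $(y,i)$ represents $(\xi^{i-}(y),\chi^{i+}(y))$, giving the identity $\chi^{i+}(y)=f(\xi^{i-}(y))$; then Lemma~\ref{lem:repr} forces $\xi^{i-}(y)\to y$, and continuity of $f$ (Corollary~\ref{cor:arule}) finishes. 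Your route is cleaner and makes transparent why the slider hypothesis forces convergence of the sweep; the paper's route, while longer, produces a concrete combinatorial witness and does not invoke continuity of $f$ explicitly.
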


\begin{proof}
  If $\chi$ is not a sweeper for $f$ then there is a configuration
  $y$ for which the limit $\lim_{i\to-\infty} \chi^{\itoinfty{i}}(y)$
  does not exist or is wrong.
  In both cases there is a cell $j\in \Z$ and a state $s\in\ST$ such
  that $s\not=f(y)_j$ but $\chi^{\itoinfty{i}}(y)_j=s$ for infinitely
  many $i < j-n$.

  We will construct a configuration $x$ such that $\chi^{j-}(x)=y$
  and $\chi^{j+}(x)_j=s\not=f(y)_j$.
  Therefore $\chi$ is not a slider for $f$ (see Def.~\ref{def:arule}).

  As a first step we subdivide the ``left part'' $(-\infty,j+n)$ of
  $\Z$ into \emph{windows} $W_k$ of length $n$.
  For $k\geq 0$ let $p_k=-kn+j$ denote the smallest index in $W_k$,
  i.\,e.~$W_k=[p_k,p_{k-1})$ (where $p_{-1}=j+n$).
  Analogously divide the ``left part'' of $y$ into words $y^{(k)}$ of
  length $n$ by setting $y^{(k)} = y|_{W_k}$ (see Fig.~\ref{fig:Wk}).

    \begin{figure}[ht]
    \centering
    \begin{tikzpicture}[x={(1mm,0mm)},y={(0mm,1mm)},decoration={brace,amplitude=2mm}]
      \draw (-3,0) node {$y$};
      \draw (0,0) -- +(100,0);
      \foreach \x in {2,4,...,99} {
        \draw (\x,0) -- +(0,1) -- +(0,-1);
      }
      \foreach \x in {10,30,...,90} {
        \draw[thick] (\x,0) -- +(0,2) -- +(0,-2);
      }
      \draw (71,8) node[rotate=20,anchor=south west] (N0) {$p_0=j$};
      \draw[->] (N0.south west) -- (71,2);
      \draw (51,8) node[rotate=20,anchor=south west] (N1) {$p_1=-n+j$};
      \draw[->] (N1.south west) -- (51,2);
      \draw (31,8) node[rotate=20,anchor=south west] (N2) {$p_2=-2n+j$};
      \draw[->] (N2.south west) -- (31,2);

      \draw [decorate] (90,-3) -- (70,-3) (80,-7) node {$W_0$};
      \draw [decorate] (70,-3) -- (50,-3) (60,-7) node {$W_1$};;
      \draw [decorate] (50,-3) -- (30,-3) (40,-7) node {$W_2$};;
    \end{tikzpicture}
    \caption{For configuration $y$ the windows $W_k$ contain the words
      $y^{(k)}$.}
    \label{fig:Wk}
  \end{figure}
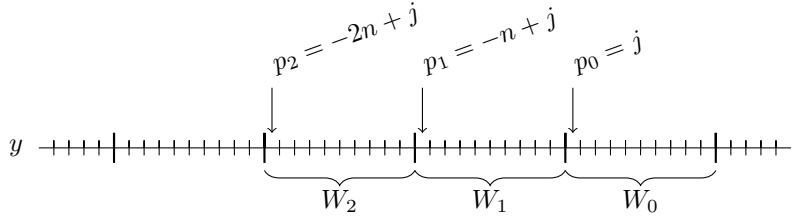

  Let $M$ denote the set $\{i \mid \chi^{\itoinfty{i}}(y)_j=s\}$.
  $M$ contains infinitely many integers $i< p_1=j-n$.
  Then there has to be a word $v^{(0)}\in\ST^n$ such that the set
  $M_0 = \{ i\in M \mid i<p_1 \text{ and } \chi^{[i,j)}(y)|_{W_0} =
  v^{(0)} \}$ is infinite.
  Since $M_0\subseteq M$ certainly $\chi(v^{(0)})_0 = s$ holds.

  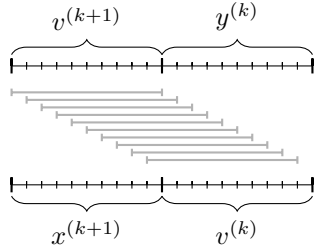
\begin{figure}[ht]
    \centering
    \begin{tikzpicture}[x={(2mm,0mm)},y={(0mm,0.5mm)},decoration={brace,amplitude=2mm}]
      \draw (0,0) -- +(20,0);
      \foreach \x in {0,1,...,20} {
        \draw (\x,0) -- +(0,1) -- +(0,-1);
      }
      \foreach \x in {0,10.,20} {
        \draw[thick] (\x,0) -- +(0,2) -- +(0,-2);
      }

      \draw [decorate] ( 0,3) -- node[above=2mm] {$v^{(k+1)}$} (10,3);
      \draw [decorate] (10,3) -- node[above=2mm] {$y^{(k)}$} (20,3);

      \begin{scope}[yshift=-10]]
        \foreach \x in {0,1,...,9} {
          \draw[thick,gray!60!white]  (\x,-\x) ++(0,-\x) ++(0,1) -- +(0,-2) ++(0,-1) -- ++ (10,0)
          ++(0,1) -- ++(0,-2);
        }
      \end{scope}

      \begin{scope}[yshift=-45]
      \draw (0,0) -- +(20,0);
      \foreach \x in {0,1,...,20} {
        \draw (\x,0) -- +(0,1) -- +(0,-1);
      }
      \foreach \x in {0,10.,20} {
        \draw[thick] (\x,0) -- +(0,2) -- +(0,-2);
      }

      \draw [decorate] (10,-3) -- node[below=2mm] {$x^{(k+1)}$} ( 0,-3);
      \draw [decorate] (20,-3) -- node[below=2mm] {$v^{(k)}$} (10,-3);
      \end{scope}
    \end{tikzpicture}
    \caption{Transition from $v^{(k+1)}y^{(k)}$ to $x^{(k+1)}v^{(k)}$
      by applying $\chi$ from left to right at the positions indicated
      by the gray bars. Application of the inverse $\xi$ from
      right to left realizes the opposite transition from bottom to
      top.}
    \label{fig:a-b-1}
  \end{figure}

  For all $k\geq 0$ we now inductively define words $v^{(k+1)}$ and
  $x^{(k+1)}$ (all of length $n$) and along with it infinite
  sets $M_{k+1}$.
  Since $M_k$ is infinite, there is a word $v^{(k+1)}$ such that the set
  \[
    M_{k+1} = \{ i \in M_k \mid i< p_{k+2} \text{ and }
    \chi^{[i,p_k)}(y)|_{W_{k+1}} = v^{(k+1)} \}
  \]
  is infinite.
  Since $M_{k+1}\subseteq M_k$ one has
  $\chi^{[p_{k+1},p_k)}(v^{(k+1)}y^{(k)}) = x^{(k+1)} v^{(k)}$ for
  some $x^{(k+1)}$ (see Fig.\ref{fig:a-b-1}).
  Since $\chi$ is bijective, for the inverse $\xi$ of $\chi$ holds
  $v^{(k+1)}y^{(k)} = \xi^{[p_{k+1},p_k)^R}(x^{(k+1)} v^{(k)})$, too.
  Note again that $\xi$ is applied from right to left.

  Now choose configuration
  $x= \cdots x^{(3)} \cdot x^{(2)} \cdot x^{(1)}
  \cdot v^{(0)} \cdot y_{\itoinfty{j+n}}$.

  On one hand $\chi^{j+}(x)$ already after the application of $\chi$
  at position $j$ produces state $s$ there which never changes again.
  Thus $\chi^{j+}(x)\not=f(y)$.

  On the other hand by construction for all $k\geq 0$ holds
  $\xi^{[p_{k+1},p_k)^R}(x^{(k+1)} v^{(k)}) = v^{(k+1)}y^{(k)}$.
  Therefore

  \begin{align*}
    \xi^{[p_1,p_0)^R}(x)
    &=\xi^{[p_1,p_0)^R}( \cdots x^{(3)} x^{(2)} x^{(1)} v^{(0)}y_{\itoinfty{j+n}}) \\
    &=\cdots x^{(3)} x^{(2)} v^{(1)}z^{(0)}y_{\itoinfty{j+n}} \\
    \intertext{and by induction for all $k\geq 0$}
    \xi^{[p_{k+1},p_0)^R}(x)
    &= \xi^{[p_{k+1},p_0)^R}( \cdots x^{(3)} x^{(2)} x^{(1)} v^{(0)}y_{\itoinfty{j+n}}) \\
    &= \cdots x^{(k+1)} v^{(k)} y^{(k-1)}\cdots y^{(0)}y_{\itoinfty{j+n}}
  \end{align*}
  Obviously one gets $\toleft{\xi}(x) =y$.
\end{proof}

\begin{lemma}
  \label{lem:not-a=>not-b}
  If $\chi$ is not a slider for $f$ then it is not a sweeper for
  $f$.
\end{lemma}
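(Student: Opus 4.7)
The plan is to prove the contrapositive via an explicit construction. Suppose $\chi$ is not a slider for $f$; then there is $(y,z)$ in the slider relation with $z \neq f(y)$, with a representation $(x,i)$ satisfying $\xi^{i-}(x)=y$ and $\chi^{i+}(x)=z$. Pick a position $p^*$ with $z_{p^*}\neq f(y)_{p^*}$. By Lemma~\ref{lem:reprlemma} (representations exist at every position), replace $(x,i)$ by a representation whose position $i$ satisfies $i\leq p^* - r$, where $r$ is a neighborhood radius of $f$. This choice guarantees that any change to $y$ on $(-\infty, i)$ leaves $f(\cdot)_{p^*}$ unaffected.

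Because $\chi$ is bijective, so is the associated Mealy automaton $\mealy$ from Section~\ref{subsec:mealy}, where $n$ is the block length of $\chi$. Write $\mealy^{-1}(x_{[i-n,i)}\cdot x_{[i,i+n)}) = \hat q\cdot\hat a$ with $\hat q,\hat a\in\ST^n$, so that by definition $\delta(\hat q,\hat a)=x_{[i,i+n)}$. Interpreting $\xi^{[i-n,i)^R}$ as one application of $\mealy^{-1}$ yields $\xi^{[i-n,i)^R}(x)_{[i,i+n)}=\hat a$; any further $\xi^k$ with $k\leq i-n-1$ acts on cells $[k,k+n)\subseteq(-\infty,i)$, so the seam block is exactly $y_{[i,i+n)}=\hat a$. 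By Lemma~\ref{lem:mealy}, the state $\hat q$ is good: there is an infinite sequence $\bar e=(\ldots,e_{-1},e_0)$ of blocks in $\ST^n$ with $\delta^*(\bar e_k)=\hat q$ for infinitely many $k$.

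Define $y^\dagger$ by $y^\dagger_{[i,\infty)} = y_{[i,\infty)}$ and $y^\dagger_{[i-(m+1)n,\,i-mn)} = e_{-m}$ for each $m\geq 0$. For a good $k$, set $j_k = i-(k+1)n$. The forward Mealy trajectory of $\chi^{j_k+}(y^\dagger)$ starts in state $e_{-k}$; after processing the inputs $e_{-(k-1)},\ldots,e_0$ it reaches state $\delta^*(\bar e_k)=\hat q$ at cells $[i-n,i)$, and the next step reads the seam input $y^\dagger_{[i,i+n)}=\hat a$ and lands in state $\delta(\hat q,\hat a)=x_{[i,i+n)}$ at cells $[i,i+n)$. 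From there, since $y^\dagger_{[i+n,\infty)}=y_{[i+n,\infty)}=x_{[i+n,\infty)}$, the Mealy trajectory coincides with that of $\chi^{i+}(x)$, so $\chi^{j_k+}(y^\dagger)_{[i,\infty)}=z_{[i,\infty)}$, and in particular $\chi^{j_k+}(y^\dagger)_{p^*}=z_{p^*}$ along this subsequence. Because $p^*-r\geq i$, we have $y^\dagger_{[p^*-r,p^*+r]}=y_{[p^*-r,p^*+r]}$, so $f(y^\dagger)_{p^*}=f(y)_{p^*}\neq z_{p^*}$; hence $\chi^{j+}(y^\dagger)$ cannot converge to $f(y^\dagger)$, and $\chi$ is not a sweeper for $f$.

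The main obstacle is the Mealy bookkeeping above: recognizing that the seam block $y_{[i,i+n)}$ is forced to equal the $\hat a$ part of $\mealy^{-1}(x_{[i-n,i)}\cdot x_{[i,i+n)})$, so that a good left-sequence driving the Mealy state into $\hat q$ infinitely often (supplied by Lemma~\ref{lem:mealy}) locks the remainder of the sweep onto the trajectory of $\chi^{i+}(x)$ and hence forces the output on $[i,\infty)$ to equal $z$.
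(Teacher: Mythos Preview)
Your proof is correct and follows essentially the same route as the paper: both argue the contrapositive by invoking Lemma~\ref{lem:mealy} on the Mealy automaton $\mealy(qa)=\chi^{[0,n)}(qa)$ to manufacture a left tail whose forward sweep lands in the desired window state infinitely often, thereby producing a configuration on which the sweeper limit is wrong at a chosen cell. Your variant---targeting the predecessor state $\hat q$ rather than $w=x_{[i,i+n)}$ directly, so that $y^\dagger$ agrees with $y$ on all of $[i,\infty)$---is a minor but tidy refinement that makes the radius bookkeeping for $f(y^\dagger)_{p^*}=f(y)_{p^*}$ cleaner than in the paper's write-up.
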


\begin{proof}
  If $\chi$ is not a slider for $f$ 
  then there exists a configuration $x$ and an $i\in\Z$ such that for
  $z=\toright{\chi}(x)$ and $y=\toleft{\chi}(x)$ one has
  $f(y) \not= z$.
  Let $\xi$ be the inverse of $\chi$.
  Let $j$ be a cell where $f(y)_j\not= z_j$.
  If $j<i+n$ instead of $x$ can consider
  $x'=\xi^{[i-1,\dots,i-m]} (x)$ for some sufficiently large
  $m$.
  Assume therefore that $j\geq i+n$.

  We will prove that there is a configuration $v$ such that for
  infinitely many positions $m$ the configuration
  $\chi^{\itoinfty{m}}(v)$ will not have the correct state at position
  $j$.
  Therefore the limit $\lim_{m\to-\infty}\chi^{\itoinfty{m}}(v)$
  cannot exist and have the correct state at position $j$.
  Thus $\chi$ is not a sweeper for $f$.

  Below the abbreviation $\STT=\ST^n$ is used.

  Configuration $x$ is of the form
  $z_{\minftytoi{i}}\cdot w \cdot y_{\itoinfty{i+n}}$ for some
  $w\in\STT$.
  Applying $\chi$ at position $i$ and further to the right produces
  the same result independent of what is to the left of $w$.
  Therefore if $z_{\minftytoi{i}}$ is replaced by any
  $z'_{\minftytoi{i}}$ still the wrong state is produced at position
  $j$.

  Define a Mealy automaton with $Q=A$ by
  $\mealy(qa)=\chi^{[0,n)}(qa)$ (observe that $qa\in\ST^{2n}$).
  Since $\mealy$ is bijective, one can now use the result from
  Lemma~\ref{lem:mealy} and conclude that there is a sequence
  $(\ldots,v^{(2)},v^{(1)})$, infinite to the left, of elements
  $v^{(k)}\in\STT$ such that
  \begin{equation}
    \text{$\delta^*(v^{(k)}\cdots v^{(1)})=w$ for infinitely many $k$.} \label{cond:star}
  \end{equation}
  Let $v$ be the infinite to the left half-configuration obtained by
  concatenating all $v^{(k)}$, more precisely
  $v\from \minftytoi{i+n}\to\ST$ where $v_{-kn+j+i} = v^{(k)}_j$ for
  all $k\geq 1$ and all $j\in[0,n)$.

  Condition~(\ref{cond:star}) implies that for infinitely many
  $k\geq 1$ applying $\chi$ in $v$ from position $-kn+i$ up to but
  excluding $i$ produces $w$ at the end, i.\,e.~in the window
  $[i,i+n)$. In other words
  $\chi^{[-kn+i,i)}(v) = v'_{\minftytoi{i}} \cdot w$ ($v'$
  depends on $k$ but doesn't matter).
  Therefore for infinitely many $k$
  \begin{align*}
    \chi^{[-kn+i,i)}(vy_{\itoinfty{i+n}})
    &= \cdots \cdot w\cdot z_{\itoinfty{i+n}} \\
    \text{and \quad} \chi^{\itoinfty{-kn+i}}(vy_{\itoinfty{i+n}})
    &= \cdots \cdot w'\cdot z_{\itoinfty{i+n}}
  \end{align*}
  Since we could assume that the position $j$ where $f(y)_j\not= z_j$
  is in the interval $\itoinfty{i+n}$ one can conclude that $\chi$ is
  not a sweeper for $f$.
\end{proof}

While the slider and sweeper relations defined by a block rule are
equal when at least one them defines a cellular automaton, sweeper
relations can also define non-continuous functions.

\begin{example}
  \label{ex:NotClosed}
  Let $\ST = \{\ns{0}{0},\ns{0}{1},\ns{1}{0},\ns{1}{1}\}$ and define
  $\chi : \ST^2 \to \ST^2$ by
  $\chi(\ns{1}{0} \ns{0}{0}) = \ns{0}{0} \ns{0}{1}$,
  $\chi(\ns{0}{0} \ns{0}{1}) = \ns{1}{0} \ns{0}{0}$, and
  $\chi(ab) = ab$ for
  $ab \notin \{\ns{1}{0} \ns{0}{0}, \ns{0}{0} \ns{0}{1}\}$.

  We claim that $\lim_{i \rightarrow -\infty} \chi^{i+}(x)$ is
  well-defined for all $x \in \ST^\Z$, so that the sweeper relation
  $\chi$ defines is a function. Let $x \in \ST^\Z$ be arbitrary, and
  let $n \in \Z$. We need to show that $\chi^{i+}(x)_n$ converges.

  Suppose first that for some $k < n$, we have $x_k = \ns{1}{a}$ for
  $a \in \{0,1\}$. Then for all $i < k$, the value $\chi^{i+}(x)_n$ is
  independent of the values $x_j \leq k$, since
  $\chi^{[i,k-1]}(x)_k = \ns{1}{a}$, meaning that the sweep is
  synchronized (in the sense that whatever information was coming from
  the left is forgotten and the sweep continues the same way) and
  $\chi^{i+}(x)_n$ is determined by $x_{[k,n]}$ for all $i < k$. Thus,
  in this case $\chi^{i+}(x)_n$ converges.

  Suppose then that for all $k < n$, $x_k = \ns{0}{a}$ for some
  $a \in \{0,1\}$. If $x_k = \ns{0}{0}$ for some $k < n$, then since
  $x_{k-1} \neq \ns{1}{0}$ we also have
  $\chi^{[i,k-2]}(x)_{k-1} \neq \ns{1}{0}$.
  Thus, the value at $k$ does not change when $\chi$ is applied at
  $k-1$, and as in the previous paragraph, the sweep is synchronized
  at this position. Again $\chi^{i+}(x)_n$ is determined by
  $x_{[k,n]}$ for all $i < k$, so $\chi^{i+}(x)_n$ converges.

  In the remaining case, $x_k = \ns{0}{1}$ for all $k < n$. Then since
  $\chi(\ns{0}{1} \ns{0}{1}) = \ns{0}{1} \ns{0}{1}$, the rule is not
  applied in the left tail of $x$, and thus certainly $\chi^{i+}(x)_n$
  converges.

  The function defined by the sweeper relation is not continuous at
  $\ns{0}{1}^\Z$ since $\chi^\Z(\ns{0}{1}^\Z) = \ns{0}{1}^\Z$ while
  for any $n \in \N$ we have
  \[
    \chi^\Z(...\ns{0}{0} \ns{0}{0} \ns{0}{0} \ns{0}{1}^n .
    \ns{0}{1}^\N) = ...\ns{0}{0} \ns{0}{0} \ns{1}{0} \ns{1}{0}^n .
    \ns{1}{0}^\N
  \]
\end{example}

\section{Realization of bi-closing CA using LR and RL sliders}
\label{sec:rev-ca}

In the definition of a slider we use a left-to-right slide of the
window to realize the CA transition.
Of course, one can analogously define \emph{right-to-left sliders} and
prove a characterization via right-closing CA.
We can also alternate these two types of rules, and obtain a
ladder-shaped hierarchy analogous to the Borel, arithmetic and
polynomial hierarchies. 

\begin{definition}
  Let $\SR$ denote the set of CA definable as slider relations with
  the ``from left to right'' as in Definition~\ref{def:arule}.
  Analogously let $\SL$ denote the set of CA definable as
  right-to-left slider relations.
  Denote $\Delta = \SL \cap \SR$.
  Let now $\SL_0 = \SR_0 = \{\ID\}$, and for all $k \in \N_0$ let
  $\SL_{k+1} = \SL \circ \SR_{k}$ and $\SR_{k+1} = \SR \circ \SL_{k}$.
  For all $n$, write $\Delta_n = \SL_n \cap \SR_n$.
\end{definition}

Note that in $\SL_n$, there are $n$ sweeps (slider applications) in
total, and the last sweep goes from right to left.
We have $\SL_1 = \SL$, $\SR_1 = \SR$, $\Delta_1 = \Delta$.
See Figure~\ref{fig:Hierarchy}. 
\begin{figure}
  \begin{center}
    \begin{tikzpicture}
      \node (I0) at (-1,1) {};
      \node[xshift=-4] (D0) at (I0) {$\Delta_1$};
      \node (L1) at (0,0) {$\SL_1$};
      \node (I1) at (1,1) {};
      \node[right = 0mm of I1] (D1) {$\Delta_2$};
      \node (R1) at (0,2) {$\SR_1$};
      \node (L2) at (2,0) {$\SL_2$};
      \node (I2) at (3,1) {};
      \node[right = 0mm of I2] (D2) {$\Delta_3$};
      \node (R2) at (2,2) {$\SR_2$};
      \node (L3) at (4,0) {$\SL_3$};
      \node (I3) at (5,1) {};
      \node[right = 0mm of I3] (D3) {$\Delta_4$};
      \node (R3) at (4,2) {$\SR_3$};
      \node (L4) at (6,0) {$\SL_4$};
      \node (I4) at (7,1) {};
      \node[right = 0mm of I4] (D4) {$\Delta_5$};
      \node (R4) at (6,2) {$\SR_4$};
      \node (L5) at (8,0) {$\cdots$};
      \node (R5) at (8,2) {$\cdots$};
      \draw (I0) -- (R1); \draw (I0) -- (L1);
      \draw (L1) -- (R2); \draw (L1) -- (L2);
      \draw (R1) -- (L2); \draw (R1) -- (R2);
      \draw (L2) -- (R3); \draw (L2) -- (L3);
      \draw (R2) -- (L3); \draw (R2) -- (R3);
      \draw (L3) -- (R4); \draw (L3) -- (L4);
      \draw (R3) -- (L4); \draw (R3) -- (R4);
      \draw (L4) -- (R5); \draw (L4) -- (L5);
      \draw (R4) -- (L5); \draw (R4) -- (R5);
    \end{tikzpicture}
  \end{center}
  \vspace{-0.5cm}
  \caption{The sliding hierarchy.}
  \label{fig:Hierarchy}
\end{figure}
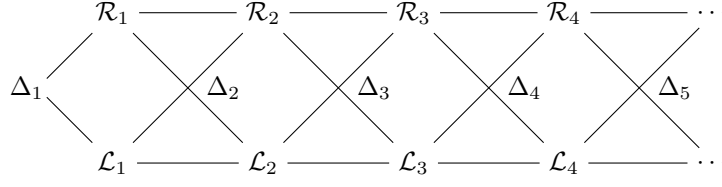


In Theorem~\ref{thm:slider-hierarchy=closing-hierarchy} below we will
show a close relation between this ``slider hierarchy'' and a
``closingness hierarchy'' defined as follows, exactly analogously.
Let $\CL$ denote the set of left-closing CA and $\CR$ the set of
right-closing CA. Define $\CL_0 = \CR_0 = \{\ID\}$ and for all $k$,
$\CL_{k+1} = \CL \circ \CR_k$ and $\CR_{k+1} = \CR \circ \CL_k$.

As always with such hierarchies, it is natural to ask whether they are
infinite or collapse at some finite level. We do not know if either
hierarchy collapses, but we show that after the first level, the
hierarchies coincide. The main ingredients for the theorem are the
following two lemmata.

\begin{lemma}
  \label{lem:DividesPower}
  Let $f$ be a left-closing CA. For all $n$ large enough,
  $\card{\stairs_{n}}$ divides some power of $\card{\ST}$.
\end{lemma}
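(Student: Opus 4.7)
The plan is to prove the divisibility by iteratively applying Corollary~\ref{cor:kurkacorollary}(b), which encodes a ``closing'' structure on stairs.

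As a first step, Corollary~\ref{cor:kurkacorollary}(b) immediately provides a bijection $\psi \colon T_m \times \ST \to \stairs_{3m}$, where $T_m \subseteq \ST^{2m-1} \times \ST^{2m-1}$ is the image of $\stairs_{3m}$ under the right-truncation map $(v,w) \mapsto (v_{[0,2m-1)}, w_{[0,2m-1)})$. The map $\psi$ sends $((v',w'),b)$ to $(av',bw')$, where $a$ is the letter uniquely determined by the corollary. Inverse: given any stair $(v,w) = (av',bw') \in \stairs_{3m}$, decompose as $a = v_0$, $b = w_0$, $(v',w') = (v_{[1,2m)}, w_{[1,2m)})$; the uniqueness clause of the corollary forces this to be the only preimage. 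Hence $|\stairs_{3m}| = |\ST| \cdot |T_m|$.

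To extract further factors of $|\ST|$, I would iterate by defining $T_m^{(0)} = \stairs_{3m}$ and $T_m^{(k+1)}$ as the right-truncation of $T_m^{(k)}$, and then establish a parallel bijection $T_m^{(k+1)} \times \ST \to T_m^{(k)}$ for as large a range of $k$ as possible, giving $|T_m^{(k)}| = |\ST| \cdot |T_m^{(k+1)}|$. At each step, uniqueness of the prepended $x$-letter $a$ is extracted by re-invoking Lemma~\ref{lem:kurka}; the hypothesis on the $x$-side ($s$ of length $m$) remains satisfied as long as $v$ is long enough, but the $f(x)$-side ($t$ of length $2m$) now overshoots the shortened $w$. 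The main technical content will be to show that the $a$ determined by the kurka argument is independent of how one extends the missing letters of $t$ — equivalently, that the two final letters of $t$ are functionally determined by $(v'',w'',b)$ plus any witnessing configuration. For this I would use the local rule (which determines the missing $f(x)$-letters from $x$-data in a window of $v''$, provided $m$ is chosen sufficiently larger than the minimal strong left-closing radius and the neighborhood radius $r$), so that any valid witness $x$ yields the same value of $a$.

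Once this iteration is carried out $k$ times, the decomposition reads $|\stairs_{3m}| = |\ST|^k \cdot |T_m^{(k)}|$. The iteration terminates when the truncated stair length drops below a constant threshold depending on $r$, leaving a set $T_m^{(k_0)}$ whose cardinality $C$ is bounded by a constant independent of $m$. Combined with the scaling $|\stairs_{3(m+1)}| = |\ST|^3 \cdot |\stairs_{3m}|$ (a consequence of the stabilization of $\lambda_f$ recorded after Theorem~\ref{thm:SliderCharacterization}), any prime $p \nmid |\ST|$ that appears in $|\stairs_{3m}|$ must appear with the same exponent for all sufficiently large $m$, and that exponent is bounded by $v_p(C)$. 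Increasing $m$ (and hence $k_0$), this bounded exponent is forced to $0$, so $|\stairs_{3m}|$ has no such prime factors; equivalently, it divides a power of $|\ST|$.

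The main obstacle is the second step: extending Corollary~\ref{cor:kurkacorollary}(b) to the truncated sets $T_m^{(k)}$ for $k \geq 1$, where the $w$-side lacks the full $2m$ letters required by Lemma~\ref{lem:kurka}. Overcoming this requires showing that the ``missing'' letters of $t$ are locally determined by the $v$-side via the radius-$r$ local rule, so that any witnessing configuration produces the same uniquely determined $a$. I expect this to go through by taking $m$ substantially larger than $r$ and the depth of iteration.
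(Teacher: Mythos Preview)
Your first step is correct: Corollary~\ref{cor:kurkacorollary}(b) does give a bijection $T_m \times \ST \to \stairs_{3m}$, so $|\stairs_{3m}| = |\ST| \cdot |T_m|$. The gap is in the iteration. You need the bijection $T_m^{(k+1)} \times \ST \to T_m^{(k)}$ to persist until the truncated stair length $2m-k$ drops to a constant, i.e.\ until $k \approx 2m$. But your own analysis of the obstacle shows this cannot happen. To reconstruct the missing tail of $t$ (namely $f(x)_{[2m-k,2m]}$) from the $v$-side via the local rule, each of those $f(x)$-positions must have its full radius-$r$ neighborhood inside the domain of $v' = x_{[m+1,3m-k)}$. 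The rightmost one, $f(x)_{2m}$, needs $x_{[2m-r,2m+r]}$, forcing $2m+r < 3m-k$, i.e.\ $k < m-r$. So the iteration halts at $k_0 \approx m-r$, where $T_m^{(k_0)}$ still contains pairs of words of length $\approx m+r$ and has cardinality of order $|\ST|^{2(m+r)}$ --- growing with $m$, not bounded. Concretely, for the identity CA ($r=0$) one has $T_m^{(m)} = \ST^m \times \ST^m$ (no constraint), so for any $(v',w') \in T_m^{(m+1)}$ and any $b$, \emph{every} $a \in \ST$ satisfies $(av',bw') \in T_m^{(m)}$; uniqueness fails and indeed $|T_m^{(m+1)}| \cdot |\ST| = |\ST|^{2m-1} \neq |\ST|^{2m} = |T_m^{(m)}|$.

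With $k_0 \approx m$ your closing argument also collapses. For a prime $p \nmid |\ST|$ you get $v_p(|\stairs_{3m}|) = v_p(|T_m^{(k_0)}|)$, and the scaling relation tells you this is constant in $m$; but since $|T_m^{(k_0)}|$ is unbounded, nothing forces that constant to be zero. Taking $m$ ``substantially larger than $r$ and the depth of iteration'' lets you iterate to any fixed depth $K$, but then the terminal set has words of length $2m-K \gg 0$ and you have gained nothing. The paper avoids this entirely by invoking Hedlund's structural theorem on preimage counts (Theorem~14.7 in~\cite{hedlund}): for a suitable word $w$ the preimages factor as $\bigcup_i \mathcal{L}_i u_i \mathcal{R}_i$ with $L \cdot M \cdot R = |\ST|^{2k}$, and one identifies $|\stairs_{3m}|$ with $M \cdot R$, giving $L \cdot |\stairs_{3m}| = |\ST|^{2k}$ directly.
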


\begin{proof}
  Let $m$ be a strong left-closing radius for $f$. Number $m$ can be
  chosen as large as needed. Let $\loc$ be the local update rule of
  $f$ of radius $3m$. By Theorem 14.7 in~\cite{hedlund} there exist,
  for $k=3m$ chosen sufficiently large,
  \begin{itemize}
  \item positive integers $L,M$ and $R$ such that
    $L\cdot M\cdot R = |\ST|^{2k}$,
  \item pairwise different words $u_1,\dots,u_M$ of length $k$,
  \item sets ${\cal L}_1,\dots, {\cal L}_M\subseteq \ST^{2k}$ of words
    of length $2k$, each of cardinality $\card{{\cal L}_i}=L$,
  \item sets ${\cal R}_1,\dots, {\cal R}_M\subseteq \ST^{2k}$ of words
    of length $2k$, each of cardinality $\card{{\cal R}_i}=R$,
  \item a word $w$ of length $3k$ whose set pre-images of length $5k$
    under $\loc$ is precisely
    \[
      \bigcup_{i=1}^M {\cal L}_iu_i{\cal R}_i.
    \]
  \end{itemize}
  See Figure~\ref{fig:hedlund} for an illustration.
  
  \begin{figure}
    \begin{center}
      \includegraphics[scale=0.5]{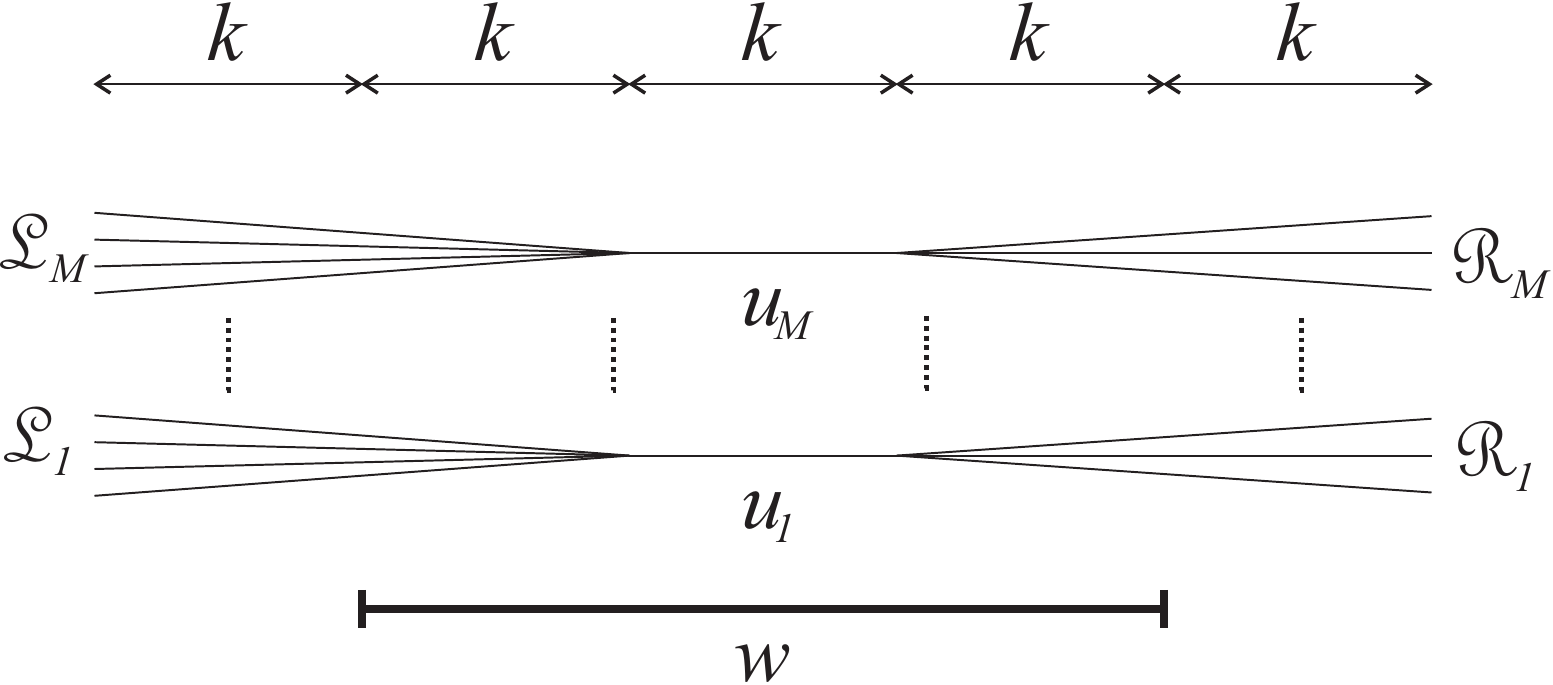}
    \end{center}
    \caption{Illustration of Theorem 14.7 in~\cite{hedlund}, with
      $L=4$, $R=3$.}
    \label{fig:hedlund}
  \end{figure}
  
  Let $y\in \ST^{[k,\infty)}$ be arbitrary and let
  $z\in\ST^{(-\infty,0)}$ be such that $z_{[-3k,0)}=w$. Define
  $A=\{x\in\ST^\Z\ |\ x_{[k,\infty)}=y, f(x)_{(-\infty,0)}=z\}$. By
  Corollary~\ref{cor:kurkacorollary} we know that
  $\card{\stairs_{3m}}=\card{A}$.

  \medskip

  \noindent
  (i) If $x\in A$ then $x_{[-4k,k)}$ is a pre-image of
  $w=z_{[-3k,0)}$. This means that for some $i\in\{1,\dots, M\}$, we
  have $x_{[-4k,k)}\in{\cal L}_iu_i{\cal R}_i$ and, in particular,
  $x_{[-2k,k)}\in u_i{\cal R}_i$.

  \medskip
  
  \noindent
  (ii) Conversely, let $i\in \{1,\dots, M\}$ and $v\in{\cal R}_i$ be
  arbitrary. Words in ${\cal L}_iu_iv$ are pre-images of $w$ so
  $f([u_iv]_{[-2k,k)})\cap [w]_{[-3k,0)} \neq\emptyset$. Because $f$
  is left-closing and $k$ is a strong left-closing radius for $f$
  there exists a unique $x\in A$ such that $x_{[-2k,k)}=u_iv$.

  \medskip
  
  \noindent
  From (i) and (ii) we can conclude that $\card{A}=M\cdot R$. Hence
  $L\cdot \card{\stairs_{3m}}=L\cdot \card{A}=L\cdot M\cdot R =
  |\ST|^{2k}$.
\end{proof}

\begin{lemma}
  \label{lem:ForLargeEnough}
  Let $f$ be a left-closing CA. Then for any large enough $n$, we have
  $\sigma^n \circ f \in \SR$.
\end{lemma}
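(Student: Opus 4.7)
I will apply Theorem~\ref{thm:SliderCharacterization2}: to establish $g := \sigma^n \circ f \in \SR$ it is enough to check that $g$ is left-closing and that $v_p(\lambda_g) \leq 0$ for every prime $p$.

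Left-closingness of $g$ is immediate from left-closingness of $f$: if $y \neq y'$ are right-asymptotic then $f(y) \neq f(y')$ by assumption, and since $\sigma^n$ is a bijection this gives $g(y) \neq g(y')$.

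The heart of the argument is the multiplicative identity
\[
  \lambda_g \;=\; |\ST|^{-n}\,\lambda_f.
\]
To prove this I would mimic the counting in the proof of Lemma~\ref{lem:a-rule->div}. Choose $m$ large enough to be a strong left-closing radius both for $f$ and for $g$, so that Corollary~\ref{cor:kurkacorollary} applies to each. Fix arbitrary $\bar y \in \ST^{[3m,\infty)}$ and $\bar z \in \ST^{(-\infty,0)}$, and set
\[
  A_f = \{x : x_{[3m,\infty)} = \bar y,\ f(x)_{(-\infty,0)} = \bar z\}, \qquad
  A_g = \{x : x_{[3m,\infty)} = \bar y,\ g(x)_{(-\infty,0)} = \bar z\}.
\]
Just as in Lemma~\ref{lem:a-rule->div}, Corollary~\ref{cor:kurkacorollary} gives $|A_f| = |\stairs_{3m}(f)|$ and $|A_g| = |\stairs_{3m}(g)|$, each independent of $\bar y, \bar z$. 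The constraint $g(x)_{(-\infty,0)} = \bar z$ is equivalent to $f(x)_{(-\infty,n)} = \tilde z$ for $\tilde z_i := \bar z_{i-n}$. Thus for fixed $\bar y$ and fixed $\tilde z_{(-\infty,0)}$, as the block $u := \tilde z_{[0,n)}$ varies over $\ST^n$, the sets $A_g$ partition $A_f(\bar y, \tilde z_{(-\infty,0)})$ according to the value of $f(x)_{[0,n)}$. Each part has cardinality $|\stairs_{3m}(g)|$, and each is nonempty since Corollary~\ref{cor:kurkacorollary}(a) applied to $g$ ensures $A_g$ is nonempty for every $\bar z$. Summing over $u$ yields $|\stairs_{3m}(f)| = |\ST|^n \cdot |\stairs_{3m}(g)|$, which is the claimed multiplicativity.

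To conclude, Lemma~\ref{lem:DividesPower} says that for $m$ sufficiently large, $|\stairs_{3m}(f)|$ has only prime factors dividing $|\ST|$; equivalently $v_p(\lambda_f) = 0$ for $p \nmid |\ST|$ and $v_p(\lambda_f) \in \Z$ is bounded for the finitely many primes $p$ dividing $|\ST|$. By the multiplicativity, $v_p(\lambda_g) = v_p(\lambda_f) - n\,v_p(|\ST|)$, which is $\leq 0$ for all $p$ once $n > \max_{p\mid|\ST|} v_p(\lambda_f)/v_p(|\ST|)$. Theorem~\ref{thm:SliderCharacterization2} then gives $g \in \SR$, as required.

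The main obstacle is the multiplicative identity: one must confirm that all $|\ST|^n$ fibers in the partition above are genuinely nonempty (which is what makes the shift "absorb" a factor of $|\ST|^n$), and that a single $m$ can be arranged to be a strong left-closing radius for $f$ and $g$ simultaneously. Both points are reducible to Corollary~\ref{cor:kurkacorollary} applied to the two CAs, but they are what make the argument go.
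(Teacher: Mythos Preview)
Your proof is correct and follows essentially the same route as the paper: verify left-closingness of $\sigma^n\circ f$, establish $\lambda_{\sigma^n\circ f}=|\ST|^{-n}\lambda_f$, invoke Lemma~\ref{lem:DividesPower} to kill the primes not dividing $|\ST|$, then pick $n$ large enough and apply Theorem~\ref{thm:SliderCharacterization2}.

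The one place you diverge is in justifying the multiplicative identity. The paper simply remarks that, as in \cite{blockCA}, the assignment $g\mapsto\lambda_g$ is a homomorphism on left-closing CA, and then computes $v_p(\lambda_{\sigma^n}\cdot\lambda_f)$. You instead give a self-contained counting argument: using that a common $m$ is a strong left-closing radius for both $f$ and $g$, you partition the set $A_f$ (as in Lemma~\ref{lem:a-rule->div}) by the value of $f(x)_{[0,n)}$ and recognize each fiber as a copy of the corresponding $A_g$, with constant size $|\stairs_{3m}(g)|$ by Corollary~\ref{cor:kurkacorollary}. This yields $|\stairs_{3m}(f)|=|\ST|^n\,|\stairs_{3m}(g)|$ directly. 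The paper's approach is shorter but relies on an external reference; yours is more elementary and keeps the argument inside the paper's own lemmas. Both are fine.
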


\begin{proof}
  By the previous lemma, we have $v_p(\lambda_f) = 0$ for all
  $p \nmid |\ST|$. Similarly as in \cite{blockCA} one sees that the
  map $g \mapsto \lambda_g$ is a homomorphism among left-closing CA,
  so
  \[
    v_p(\lambda_{\sigma^n \circ f}) = v_p(\lambda_{\sigma^n} \cdot
    \lambda_{f}) = v_p(\lambda_f) - n v_p(|\ST|) \leq 0
  \]
  for large enough $n$. The claim follows from
  Theorem~\ref{thm:SliderCharacterization2}.
\end{proof}

\begin{theorem}
  \label{thm:slider-hierarchy=closing-hierarchy}
  For each $k\in\N$ with $k\geq 2$ we have $\SL_k = \CR_k$ and $\SR_k = \CL_k$.
\end{theorem}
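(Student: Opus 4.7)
The plan is to prove both directions separately and then run them together by induction on $k$. The easy direction $\SL_k \subseteq \CR_k$ and $\SR_k \subseteq \CL_k$ works already for every $k \geq 1$: Lemma~\ref{lem:a-rule->lc} gives $\SR \subseteq \CL$, its mirror version gives $\SL \subseteq \CR$, and since left- and right-closing CA are each closed under composition, an induction $\SL_{k+1} = \SL \circ \SR_k \subseteq \CR \circ \CL_k = \CR_{k+1}$ (and symmetrically for $\SR_{k+1} \subseteq \CL_{k+1}$) finishes this direction.

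For the harder direction $\CL_k \subseteq \SR_k$ and $\CR_k \subseteq \SL_k$ ($k \geq 2$) I would argue by induction on $k$, starting from $k=2$. In the base case, take $f \in \CL_2 = \CL \circ \CR$ and write $f = g \circ h$ with $g$ left-closing and $h$ right-closing. Lemma~\ref{lem:ForLargeEnough} yields an $n$ with $\sigma^n \circ g \in \SR$, and its mirror for right-closing maps yields (after enlarging $n$ if needed) $\sigma^{-n} \circ h \in \SL$. Since CA commute with the shift,
\[
  f = g \circ h = \sigma^n \circ g \circ \sigma^{-n} \circ h = (\sigma^n \circ g) \circ (\sigma^{-n} \circ h) \in \SR \circ \SL = \SR_2,
\]
and the inclusion $\CR_2 \subseteq \SL_2$ is obtained by the mirrored argument.

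For the inductive step, assume $\CL_k \subseteq \SR_k$ and $\CR_k \subseteq \SL_k$ for some $k \geq 2$, and take $f = g \circ h \in \CL \circ \CR_k = \CL_{k+1}$. Pick $n$ large enough so that $\sigma^n \circ g \in \SR$, and use shift-commutativity to rewrite $f = (\sigma^n \circ g) \circ (\sigma^{-n} \circ h)$ exactly as above. To place $\sigma^{-n} \circ h$ in $\SL_k$, unfold $\CR_k = \CR \circ \CL_{k-1}$ as $h = h_1 \circ h_2$ with $h_1 \in \CR$; because $\sigma^{-n}$ is trivially right-closing, $\sigma^{-n} \circ h_1 \in \CR$, so $\sigma^{-n} \circ h = (\sigma^{-n} \circ h_1) \circ h_2 \in \CR \circ \CL_{k-1} = \CR_k$, and the induction hypothesis delivers $\sigma^{-n} \circ h \in \SL_k$. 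Hence $f \in \SR \circ \SL_k = \SR_{k+1}$; the mirrored argument gives $\CR_{k+1} \subseteq \SL_{k+1}$.

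The main non-trivial input is Lemma~\ref{lem:ForLargeEnough} together with its mirror; the conceptually critical step is the observation that the compensating shift $\sigma^{-n}$ introduced to sliderize $g$ can be absorbed into the outermost right-closing factor $h_1$ of $h$ without raising the level in the closingness hierarchy. This also makes transparent why $k \geq 2$ is needed: at $k=1$ the $p$-adic obstruction of Theorem~\ref{thm:SliderCharacterization2} genuinely prevents equality, but as soon as a second sweep is available the offending shift can be paid off there.
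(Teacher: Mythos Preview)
Your proof is correct and uses the same key idea as the paper: compensate each closing factor with a shift via Lemma~\ref{lem:ForLargeEnough} (and its mirror) so that it becomes a slider, arranging the shifts to cancel overall. The only difference is organizational: you proceed by induction on $k$, absorbing the compensating $\sigma^{-n}$ into the outermost factor of the remaining composition at each step, whereas the paper unrolls the whole composition $f = f_1 \circ \cdots \circ f_k$ and inserts shifts $\sigma^{n_i}$ with $\sum_i n_i = 0$ all at once.
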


\begin{proof}
  By Lemma~\ref{lem:a-rule->lc} we have $f \in \SL \implies f \in \CR$
  and $f \in \SR \implies f \in \CL$, so by induction
  $\SL_k \subset \CR_k$ and $\SR_k \subset \CL_k$.

  Suppose then that $f \in \CR_k$ and $k \geq 2$, so
  \[
    f = f_1 \circ f_2 \circ \cdots \circ f_{k-1} \circ f_k
  \]
  where $f_i \in \CR$ for odd $i$ and $f_i \in \CL$ for even $i$. Then
  write
  \[
    f = (f_1 \circ \sigma^{n_1}) \circ (f_2 \circ \sigma^{n_2}) \circ
    \cdots \circ (f_{k-1} \circ \sigma^{n_{k-1}}) \circ (f_k \circ
    \sigma^{n_k})
  \]
  where $\sum_{i = 1}^k n_i = 0$ and for each odd $i$, $n_i \leq 0$ is
  small enough that $f_i \circ \sigma^{n_i} \in \SL$ and for each even
  $i$, $n_i \geq 0$ is large enough that
  $f_i \circ \sigma^{n_i} \in \SR$. This shows that $f \in \SL_k$.
  Similarly $\CL_k \subset \SR_k$, concluding the proof.
\end{proof}

A cellular automaton $f$ is \emph{bi-closing} if it is both
left-closing and right-closing, i.e. $f \in \Delta^{cl}_1$. Such
cellular automata are also called \emph{open}, since they map open
sets to open sets. By the previous result, every bi-closing CA can be
realized by a left-to-right sweep followed by a right-to-left sweep by
bijective block rules:

\begin{theorem}
  Each bi-closing CA is in $\Delta_2$.
\end{theorem}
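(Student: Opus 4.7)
The plan is to reduce this immediately to Theorem~\ref{thm:slider-hierarchy=closing-hierarchy}. That theorem gives $\SL_2 = \CR_2$ and $\SR_2 = \CL_2$, so
\[
  \Delta_2 = \SL_2 \cap \SR_2 = \CR_2 \cap \CL_2.
\]
Thus it suffices to place any bi-closing CA $f$ into the closingness hierarchy level $\CL_2 \cap \CR_2$, and the theorem on sliders hands back the slider-hierarchy membership for free.

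To land $f$ in $\CL_2 \cap \CR_2$, I would use the trivial decomposition $f = f \circ \ID$. First I would verify (in one sentence) that the identity CA is both left-closing and right-closing, which follows straight from Definition~\ref{def:LeftClosing} since $\ID$ is injective: distinct right-asymptotic (resp.\ left-asymptotic) configurations $y \neq y'$ obviously have $\ID(y) = y \neq y' = \ID(y')$. Hence $\ID \in \CL_1 \cap \CR_1$.

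Now, by hypothesis $f \in \CL \cap \CR$. Since $\CL_2 = \CL \circ \CR_1$ by definition, the decomposition $f = f \circ \ID$ with $f \in \CL$ and $\ID \in \CR_1$ shows $f \in \CL_2$. Symmetrically, $\CR_2 = \CR \circ \CL_1$, and the same decomposition (now reading $f \in \CR$ and $\ID \in \CL_1$) gives $f \in \CR_2$. Combining, $f \in \CL_2 \cap \CR_2 = \Delta_2$.

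There is no real obstacle here; the content of the statement is entirely absorbed by Theorem~\ref{thm:slider-hierarchy=closing-hierarchy}, whose proof in turn relied on the key Lemmas~\ref{lem:DividesPower} and~\ref{lem:ForLargeEnough} for padding one-sidedly closing CA with shifts into sliders. The only thing to keep straight is the indexing convention for $\CL_k$ and $\CR_k$, namely that the subscript counts the number of factors and the label (L or R) records the \emph{last} (leftmost) factor's closingness type, so that both $\CL_2$ and $\CR_2$ really are the right targets for embedding a bi-closing $f$.
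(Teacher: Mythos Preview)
Your proposal is correct and matches the paper's approach exactly: the paper presents this theorem as an immediate corollary of Theorem~\ref{thm:slider-hierarchy=closing-hierarchy} without a separate proof, and your argument via the trivial decomposition $f = f \circ \ID$ is precisely how one unpacks that corollary. The only thing the paper leaves implicit that you spell out is the observation $\ID \in \CL \cap \CR$, which is indeed trivial.
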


\section{Decidability}
\label{sec:Decidability}

In this section, we show that our characterization of sliders and
sweepers shows that the existence of them for a given CA is decidable.
We also show that given a block rule, whether it defines some CA as a
slider (equivalently as a sweeper) is decidable. We have seen that
sweepers can also define shift-commuting functions which are not
continuous. We show that this condition is also decidable.

\begin{lemma}
  \label{lem:decidable-left-closing-strong-radius}
  Given a cellular automaton $f : \ST^\Z \to \ST^\Z$, it is decidable
  whether it is left-closing, and when $f$ is left-closing, a strong
  left-closing radius can be effectively computed.
\end{lemma}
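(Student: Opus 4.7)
The plan is to decide left-closingness using a standard finite pair-graph construction, and then to compute a strong left-closing radius by searching through candidate values of $m$ and verifying the conditions of Lemma~\ref{lem:kurka} directly.

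First, I would fix a neighborhood radius $r \geq 1$ and local rule $\loc : \ST^{2r+1} \to \ST$ of $f$, and build a finite directed graph $G$ on vertex set $V = \ST^{2r} \times \ST^{2r}$ with an edge $(u, u') \to (v, v')$ whenever there exist $a, a' \in \ST$ such that $v = u_{[1, 2r)} \cdot a$, $v' = u'_{[1, 2r)} \cdot a'$ and $\loc(u \cdot a) = \loc(u' \cdot a')$. Biinfinite walks in $G$ correspond bijectively to pairs $(x, x') \in \ST^\Z \times \ST^\Z$ with $f(x) = f(x')$, via the vertex at step $k$ recording $(x_{[k - r, k + r)}, x'_{[k - r, k + r)})$. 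Writing $D = \{(u, u) : u \in \ST^{2r}\}$ for the diagonal, the failure of left-closingness is equivalent to the existence of a biinfinite walk in $G$ that is eventually confined to $D$ but passes through some vertex outside $D$. This reduces to two decidable finite reachability questions: such a walk exists iff there is a vertex $(u, u') \in V \setminus D$ that (a) can reach $D$ along a directed path of $G$, and (b) is itself reachable along directed paths of arbitrary finite length (equivalently, reachable from some cycle of $G$). Both are standard graph computations, settling decidability of left-closingness.

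Second, once $f$ is known to be left-closing, I would enumerate $m = 2r, 2r + 1, \ldots$ and for each $m$ test the condition of Lemma~\ref{lem:kurka} directly: for every $s \in \ST^m$, $t \in \ST^{2m}$ and $b \in \ST$, decide whether $f([s]_{(m, 2m]}) \cap [t]_{(0, 2m]} \neq \emptyset$ and, if so, verify that exactly one $a \in \ST$ satisfies $f([a s]_{[m, 2m]}) \cap [b t]_{[0, 2m]} \neq \emptyset$. Each such nonemptiness predicate reduces, by locality of $\loc$, to a finite enumeration: one searches for a word of length $2m + 2r$ agreeing with the prescribed symbols on the given positions and whose image under $\loc$ contains the prescribed prefix. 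Since Lemma~\ref{lem:kurka} asserts that the condition holds for all sufficiently large $m$, the loop halts, and the first $m$ for which it succeeds is a strong left-closing radius.

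The main obstacle I expect is establishing correctness of the pair-graph criterion, namely that conditions (a) and (b) together capture exactly those biinfinite walks that are eventually confined to $D$ but visit a non-diagonal vertex. Condition (a) yields an eventual future tail in $D$, which extends to an infinite forward walk inside $D$ because every $(u, u) \in D$ admits an outgoing edge into $D$ by choosing $a = a'$ in the edge definition. Condition (b) supplies arbitrarily long past walks ending at the fixed non-diagonal vertex, from which a standard compactness / K\"onig's-lemma argument produces an actual biinfinite walk in $V^\Z$; the associated pair $(x, x')$ then witnesses the failure of left-closingness via the bijection above.
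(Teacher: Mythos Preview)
Your proposal is correct, and the second half (enumerating $m$ and checking the condition of Lemma~\ref{lem:kurka} by finite search) is exactly what the paper does. The difference lies in how you decide left-closingness itself.

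The paper argues by running two semi-decision procedures in parallel: on one side it searches for an $m$ satisfying the strong left-closing condition (which, once found, both certifies left-closingness and gives the radius); on the other side it searches for a pair of eventually periodic right-asymptotic configurations witnessing failure of left-closingness, whose existence follows from a pigeonhole argument. Your approach instead builds the pair graph $G$ once and reduces the question to finite reachability: a non-diagonal vertex that can reach the diagonal and is reachable from a cycle. Your route is more explicit and terminates in a single pass over $G$, whereas the paper's route is shorter to state and avoids introducing the auxiliary graph, at the cost of leaving the termination bound implicit in the pigeonhole step. Either buys decidability; yours additionally makes the complexity of the decision visible.

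One small point worth tightening in your write-up: when you argue the converse direction of the pair-graph criterion, you invoke compactness/K\"onig to pass from arbitrarily long past walks to an infinite past walk. Since you have already observed that condition (b) is equivalent in a finite graph to reachability from a cycle, you can simply loop around that cycle to manufacture the infinite past directly, with no appeal to compactness needed.
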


\begin{proof}
  It is obviously decidable whether a given $m \in \N$ is a strong
  left-closing radius, since checking this requires only
  quantification over finite sets of words. This shows that
  left-closing is semi-decidable and the $m$ can be computed when $F$
  is left-closing. When $F$ is not left-closing, there exist $x, y$
  such that $x_{[1,\infty)} = y_{[1,\infty)}$, $x_0 \neq y_0$ and
  $F(x) = F(y)$. A standard pigeonhole argument shows that there then
  also exist such a pair of points whose left and right tails are
  eventually periodic, showing that not being left-closing is
  semidecidable.
\end{proof}

\begin{lemma}
  \label{lem:lambdaf}
  Given a left-closing cellular automaton $f : \ST^\Z \to \ST^\Z$, one
  can effectively compute the rational number $\lambda_f$ defined in
  Equation~\eqref{eq:lambda} on page \pageref{eq:lambda}.
\end{lemma}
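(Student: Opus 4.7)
The plan is to leverage the fact, noted just before Equation~\eqref{eq:lambda}, that the ratio $|\stairs_{3m}|/|\ST|^{3m}$ stabilizes as soon as $m$ is a strong left-closing radius: by Corollary~\ref{cor:kurkacorollary}(a) the set $\stairs_{3m}(y,z)$ no longer depends on the boundary configurations, and the standard monotonicity argument (already implicit in the proof of Lemma~\ref{lem:a-rule->div}) shows the ratio does not change for larger $m$. So once we can produce such an $m$ and count $|\stairs_{3m}|$, we are done.

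First I would invoke Lemma~\ref{lem:decidable-left-closing-strong-radius} to effectively obtain a strong left-closing radius $m$ for $f$, together with a neighborhood radius $r \leq m$ and a local rule $\loc : \ST^{2r+1} \to \ST$. Then $\lambda_f = |\stairs_{3m}|/|\ST|^{3m}$, so the task reduces to counting $\stairs_{3m}$.

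Next I would count $|\stairs_{3m}|$ by brute enumeration. Recall $(v,w) \in \stairs_{3m}$ iff $f([v]_{[m,3m)}) \cap [w]_{[0,2m)} \neq \emptyset$, i.e.\ there exists some $x \in \ST^\Z$ with $x_{[m,3m)} = v$ and $f(x)_{[0,2m)} = w$. Since $f$ has radius $r \leq m$, the values $f(x)_j$ for $j \in [0,2m)$ depend only on $x_{[-m, 3m)}$. Thus, for each fixed pair $(v,w) \in \ST^{2m} \times \ST^{2m}$, membership in $\stairs_{3m}$ is decided by searching over the finitely many choices of $x_{[-m,m)} \in \ST^{2m}$ and applying $\loc$ to verify whether $f(x)_{[0,2m)} = w$ under $x_{[m,3m)}=v$. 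Iterating over all $|\ST|^{4m}$ pairs $(v,w)$ yields $|\stairs_{3m}|$, and hence the reduced rational $\lambda_f = |\stairs_{3m}|/|\ST|^{3m}$.

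There is no real obstacle: the only subtlety is ensuring that the value obtained from a single sufficiently large $m$ genuinely equals the limit in Equation~\eqref{eq:lambda}, which is exactly what Corollary~\ref{cor:kurkacorollary} guarantees. Correctness of the counting step is immediate from the cylinder reformulation of $\stairs_{3m}$ given right after Definition~\ref{def:stair}, and termination is clear since every step of the procedure is a finite enumeration.
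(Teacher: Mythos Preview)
Your proposal is correct and follows essentially the same approach as the paper: invoke Lemma~\ref{lem:decidable-left-closing-strong-radius} to compute a strong left-closing radius $m$, use the observation after Equation~\eqref{eq:lambda} that the ratio stabilizes at this $m$, and then compute $|\stairs_{3m}|/|\ST|^{3m}$. The paper's own proof is two sentences and leaves the finite enumeration of $\stairs_{3m}$ implicit, whereas you spell it out via the cylinder reformulation; this is extra detail rather than a different idea.
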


\begin{proof}
  As observed after defining \eqref{eq:lambda}, the limit is reached
  in finite time, once $m$ is a strong left-closing radius. By the
  previous lemma, one can effectively compute a strong left-closing
  radius.
\end{proof}

\begin{theorem}
  \label{thm:decidable-slider-sweeper}
  Given a cellular automaton $f : \ST^\Z \to \ST^\Z$, it is decidable
  whether $f$ is a slider (resp. sweeper).
\end{theorem}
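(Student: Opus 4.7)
The plan is to appeal directly to Theorem~\ref{thm:SliderCharacterization2}, which reduces the slider question to two conditions: (i) $f$ is left-closing, and (ii) $v_p(\lambda_f) \leq 0$ for all primes $p$. Both conditions are individually decidable, so their conjunction is decidable.

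For condition (i), I would invoke Lemma~\ref{lem:decidable-left-closing-strong-radius}, which says left-closingness is decidable and, when it holds, a strong left-closing radius $m$ can be effectively computed. If $f$ is not left-closing, we immediately answer \emph{no}. Otherwise, for condition (ii), I would first use Lemma~\ref{lem:lambdaf} to compute $\lambda_f \in \mathbb{Q}$ explicitly (which is possible because the limit in \eqref{eq:lambda} stabilizes once $m$ is a strong left-closing radius, and finite sets of stairs can be enumerated from the local rule). Then I observe that the condition $v_p(\lambda_f) \leq 0$ for \emph{every} prime $p$ is equivalent to saying that $\lambda_f$, written in lowest terms, has numerator $1$; equivalently, $1/\lambda_f$ is a (positive) integer. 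Given a rational number in explicit form, this check is completely algorithmic (reduce the fraction, inspect the numerator), so the whole procedure is effective.

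For the sweeper case, the cleanest route is via Theorem~\ref{thm:SweeperCharacterization}: whenever a bijective block rule defines $f$ as a slider, it also defines $f$ as a sweeper, and conversely. So existence of a bijective sweeper rule coincides with existence of a slider rule, and the decision procedure above covers it. The one subtlety worth flagging is that Definition~\ref{def:brule} does not insist on bijectivity of $\chi$, so in principle a non-bijective block rule might realize $f$ as a sweeper; I would argue that this does not enlarge the class by showing that any block rule $\chi$ whose sweeper relation is a CA forces essentially bijective behavior along the dynamically reachable configurations (using the same pigeonhole / representation counting argument as in Lemma~\ref{lem:NIsSame} and the classification via $\lambda_f$), so no new CAs appear.

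The main obstacle I expect is this last point — confirming that dropping bijectivity does not produce genuinely new sweeper-realizable CAs, so that the same characterization applies. If that turns out to be delicate, a safe fallback is to invoke Theorem~\ref{thm:SweeperCharacterization} only for bijective $\chi$ and interpret the theorem's "sweeper" as referring to sweepers given by bijective block rules, in which case the slider decision procedure transfers verbatim. The rest of the argument is a straightforward combination of the already-proven decidability lemmas with an elementary rational-number computation.
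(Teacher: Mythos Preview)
Your proposal is correct and follows essentially the same route as the paper: invoke Theorem~\ref{thm:SweeperCharacterization} to identify the slider and sweeper questions, then use Theorem~\ref{thm:SliderCharacterization2} together with Lemmas~\ref{lem:decidable-left-closing-strong-radius} and~\ref{lem:lambdaf} to decide the two conditions. Your observation that $v_p(\lambda_f)\le 0$ for all $p$ amounts to $1/\lambda_f\in\Z$ is a nice concretization the paper leaves implicit.

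On the non-bijective sweeper issue: the paper's own proof simply invokes Theorem~\ref{thm:SweeperCharacterization} (stated only for bijective $\chi$) without further comment, so your fallback interpretation is exactly what the paper does. Your sketched argument that non-bijective sweepers yield no new CAs via a representation-counting analogue of Lemma~\ref{lem:NIsSame} is not obviously workable as stated, since those counting arguments rely on the inverse $\xi$; but since the paper does not address this either, your proof is at least as complete as the original.
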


\begin{proof}
  By Theorem~\ref{thm:SweeperCharacterization}, a block rule is a
  sweeping rule for $f$ if and only if it is a slider rule for $f$, so
  in particular $f$ admits a slider if and only if it admits a
  sweeper. Theorem~\ref{thm:SliderCharacterization2} characterizes
  cellular automata admitting a slider as ones that are left-closing
  and satisfy $v_p(\lambda_f) \leq 0$ for all primes $p$. Decidability
  follows from the previous two lemmas.
\end{proof}

We now move on to showing that given a block rule, we can check
whether its slider or sweeper relation defines a CA.

In the rest of this section, we explain the automata-theoretic nature
of both types of rules, which allows one to decide many properties of
the slider and sweeper relations even when they do not define cellular
automata. As is a common convention in automata theory, all claims in
the rest of this section have constructive proofs (and thus imply
decidability results), unless otherwise specified.

We recall definitions from \cite{infauto} for automata on bi-infinite
words. A \emph{finite-state automaton} is $A = (Q, \ST, E, I, F)$
where $Q$ is a finite set of \emph{states}, $\ST$ the \emph{alphabet},
$E \subset Q \times \ST \times Q$ the \emph{transition relation},
$I \subset Q$ the set of \emph{initial states} and $F \subset Q$ the
set of \emph{final states}.

The pair $(Q, E)$ can be naturally seen as a labeled graph with labels
in $\ST$. The \emph{language} of such an automaton $A$ the set
$\mathcal{L}(A) \subset \ST^\Z$ of labels of bi-infinite paths in
$(Q, E)$ such that some state in $I$ is visited infinitely many times
to the left (negative indices) and some state in $F$ infinitely many
times to the right. Languages of finite-state automata are called
\emph{recognizable}.

If $A \subset \ST^{-\N}$ and $B \subset \ST^{\N}$, write
$[A, B] \subset \ST^\Z$ for the set of configurations $x \in \ST^\Z$
such that for some $y \in A, z \in B$, $x_i = A_{i+1}$ for $i < 0$ and
$x_i = B_i$ for $i \geq 0$. We need the following lemma.

\begin{lemma}[Part of Proposition~IX.2.3 in \cite{infauto}]
  \label{lem:ZetaAutomatic}
  For a set $X \subset \ST^\Z$ the following are equivalent
  \begin{itemize}
  \item $X$ is recognizable
  \item $X$ is shift-invariant and a finite union of sets of the form
    $[A, B]$ where $B$ is $\omega$-recognizable (accepted by a B\"uchi
    automaton) and $A$ is the reverse of an $\omega$-recognizable set.
  \end{itemize}
\end{lemma}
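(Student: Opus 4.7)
The plan is to prove both implications constructively by analyzing the structure of bi-infinite accepting runs, using the position $0$ as the natural cut point.

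For the forward direction, assume $X = \mathcal{L}(A)$ for a finite-state automaton $A = (Q, \ST, E, I, F)$. Shift-invariance is immediate, since translating a configuration merely translates the indexing of a bi-infinite accepting run through $(Q,E)$. To obtain the decomposition, I would classify each accepting run by the state $q \in Q$ it occupies at position $0$. For each $q$, let $B_q \subset \ST^{\N}$ consist of right-infinite words labeling forward paths from $q$ that visit $F$ infinitely often, and let $A_q \subset \ST^{-\N}$ consist of left-infinite words labeling paths ending at $q$ that visit $I$ infinitely often as the index tends to $-\infty$. Splitting an accepting run at position $0$ yields $X = \bigcup_{q \in Q} [A_q, B_q]$. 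The set $B_q$ is recognized by the B\"uchi automaton $(Q, \ST, E, \{q\}, F)$, while $A_q^R$ is recognized by the B\"uchi automaton obtained by reversing every edge of $E$, using $\{q\}$ as the initial set and $I$ as the B\"uchi set.

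For the backward direction, suppose $X$ is shift-invariant and $X = \bigcup_{i=1}^{n} [A_i, B_i]$, where $B_i$ is accepted by a B\"uchi automaton $\mathcal{B}_i = (Q_{\mathcal{B}_i}, \ST, E_{\mathcal{B}_i}, I_{\mathcal{B}_i}, F_{\mathcal{B}_i})$ and $A_i^R$ by $\mathcal{A}_i = (Q_{\mathcal{A}_i}, \ST, E_{\mathcal{A}_i}, I_{\mathcal{A}_i}, F_{\mathcal{A}_i})$. For each $i$, I would assemble a bi-infinite-word automaton $\mathcal{C}_i$ on the disjoint union $Q_{\mathcal{A}_i} \sqcup Q_{\mathcal{B}_i}$ as follows: inside $Q_{\mathcal{A}_i}$ the transitions are the reverse of $\mathcal{A}_i$'s (so $(q,a,q')$ is an edge of $\mathcal{C}_i$ iff $(q',a,q)$ is an edge of $\mathcal{A}_i$), inside $Q_{\mathcal{B}_i}$ they agree with $\mathcal{B}_i$'s, and a crossover edge $(q, a, q')$ with $q \in I_{\mathcal{A}_i}$ and $q' \in Q_{\mathcal{B}_i}$ is added whenever $(q_0, a, q')$ is an edge of $\mathcal{B}_i$ for some $q_0 \in I_{\mathcal{B}_i}$. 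Set $I_{\mathcal{C}_i} = F_{\mathcal{A}_i}$ and $F_{\mathcal{C}_i} = F_{\mathcal{B}_i}$. Since no edge goes from $Q_{\mathcal{B}_i}$ back to $Q_{\mathcal{A}_i}$, every bi-infinite run splits into a left $\mathcal{A}_i$-tail and a right $\mathcal{B}_i$-tail at a single crossover position, and the two B\"uchi conditions decouple and match those of $\mathcal{A}_i$ (read in reverse) and $\mathcal{B}_i$ respectively. Thus $\mathcal{L}(\mathcal{C}_i) = \bigcup_{n \in \Z} \sigma^n([A_i, B_i])$, which is contained in $X$ by shift-invariance. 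Conversely every element of $X$ lies in some $[A_i, B_i] \subset \mathcal{L}(\mathcal{C}_i)$, so $X = \bigcup_i \mathcal{L}(\mathcal{C}_i)$, and closure of recognizable languages under finite unions completes the argument.

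The main obstacle is getting the backward construction exactly right. The crossover transition must occur along every accepting bi-infinite run exactly once, which is enforced by partitioning the state set and omitting any edges back from $Q_{\mathcal{B}_i}$ to $Q_{\mathcal{A}_i}$; the B\"uchi conditions on the two tails must be independent, which works because $F_{\mathcal{A}_i}$ and $F_{\mathcal{B}_i}$ live in disjoint pieces of the state space; and the shift-invariance hypothesis is precisely what bridges the possibly non-shift-invariant $[A_i, B_i]$ with the shift-invariant language $\mathcal{L}(\mathcal{C}_i)$ that the automaton necessarily recognizes.
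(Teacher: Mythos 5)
The paper does not prove this lemma at all --- it is imported verbatim from Proposition~IX.2.3 of the cited reference on automata over bi-infinite words --- so there is no in-paper proof to compare against. Your argument (cutting accepting runs at position $0$ to get the $[A_q,B_q]$ decomposition, and conversely gluing a reversed B\"uchi automaton for $A_i^R$ to one for $B_i$ via one-way crossover edges so that shift-invariance absorbs the arbitrary crossover position) is correct and is essentially the standard textbook proof of this equivalence.
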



In the theorems of this section, note that the set
$\ST^\Z \times \ST^\Z$ is in a natural bijection with $(\ST^2)^\Z$.

\begin{proposition}
  \label{prop:arule-relation-decidable}
  Let $\chi : \ST^m \to \ST^m$ be a bijective block rule. Then the
  corresponding \emph{\arule\ relation} $F \subset (\ST^2)^\Z$ is
  recognizable.
\end{proposition}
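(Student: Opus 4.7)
The plan is to apply Lemma~\ref{lem:ZetaAutomatic}: shift-invariance of $F$ is already provided by Lemma~\ref{lem:AruleClosed}, so it suffices to exhibit $F$ as a finite union of sets of the form $[A, B]$ with $B$ $\omega$-recognizable and $A$ the reverse of an $\omega$-recognizable set (over the alphabet $\ST^2$, identifying $\ST^\Z \times \ST^\Z$ with $(\ST^2)^\Z$).

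By Lemma~\ref{lem:reprlemma}, every $(y, z) \in F$ has a representation $(x, 0)$ at position $0$, and by Lemma~\ref{lem:repr} such an $x$ is determined by its middle block $w = x_{[0, m)} \in \ST^m$ once one also fixes $x_{<0} = z_{<0}$ and $x_{\geq m} = y_{\geq m}$. I would further parametrise by $u = y_{[0, m)}$ and $v = z_{[0, m)}$, defining $F_{w, u, v}$ to be the set of pairs $(y, z)$ admitting such a representation with $x_{[0, m)} = w$, $y_{[0, m)} = u$, and $z_{[0, m)} = v$. There are finitely many triples in $(\ST^m)^3$, and $F = \bigcup_{w, u, v} F_{w, u, v}$.

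For a fixed triple $(w, u, v)$, the membership condition for $F_{w, u, v}$ decouples cleanly at position $0$. The right-half condition on $(y, z)_{\geq 0}$ asks that $y_{[0, m)} = u$, $z_{[0, m)} = v$, and that sweeping $\chi$ rightward from initial window $w$ using successive inputs $y_m, y_{m+1}, \ldots$ emits outputs matching $v_0, \ldots, v_{m-1}, z_m, z_{m+1}, \ldots$. This sweep is a deterministic Mealy process with $|\ST|^m$ states (the current window), and an $\omega$-automaton reading $(y_j, z_j)$ for $j \geq 0$ can verify the match by simulating the window and maintaining a buffer of size $m$ to bridge the offset between the index at which each $z_k$ is read and the index at which the sweep produces it. Since the only way to reject is a mismatch, this is a safety property and hence $\omega$-recognizable. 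Symmetrically, the left-half condition on $(y, z)_{<0}$ is a safety property of the leftward $\xi$-sweep, whose initial window is determined by $w$ and $z_{-1}$, which reads $z_{-2}, z_{-3}, \ldots$ and must emit $u_{m-1}, \ldots, u_0, y_{-1}, y_{-2}, \ldots$; after reversal of direction this is a safety property of an $\omega$-word, so the left-side language is the reverse of an $\omega$-recognizable set.

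Thus each $F_{w, u, v}$ has the form $[A_{w, u, v}, B_{w, u, v}]$ required by Lemma~\ref{lem:ZetaAutomatic}, and since $F$ is shift-invariant and equals this finite union, $F$ is recognizable. The only non-routine point is checking that the bounded delay between input and output of each sweep can be absorbed into the finite state of the verifying automaton; everything else is bookkeeping on top of Lemmas~\ref{lem:repr},~\ref{lem:reprlemma}, and~\ref{lem:AruleClosed}.
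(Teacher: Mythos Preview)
Your proof is correct and follows essentially the same approach as the paper: both parametrize by the middle block $w = x_{[0,m)}$ of a representation at position $0$ (plus auxiliary finite data to handle the overlap near the origin), realise each piece as a set $[A,B]$ via a Mealy-style simulation of the right $\chi$-sweep and left $\xi$-sweep with bounded buffering, and then invoke Lemma~\ref{lem:ZetaAutomatic}. The only difference is bookkeeping: the paper takes the auxiliary parameter to be $u = z_{[-m,0)}$ (so that the right-side automaton can compute $y_{[0,m)}$ from $uw$ via a finite $\xi$-simulation), whereas you carry $u = y_{[0,m)}$ directly---along with a harmless but redundant $v = z_{[0,m)}$---and instead have the left-side automaton check its first $m$ emissions against $u$.
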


\begin{proof}
  Let $\xi = \chi^{-1}$. The slider relation is defined as the pairs
  $y, z \in \ST^\Z$ such that for some representation $(x, 0)$ we have
  $\chi^{0-}(x) = y$ and $\xi^{0+}(x) = z$.

  For each $uw \in S^{[-m,m-1]}$ where $|u| = |w| = m$, we define
  recognizable languages
  $A_{uw} \subset (\ST^2)^{(-\infty,0)}, B_{uw} \in (\ST^2)^{[0,
    \infty)}$ such that the slider relation is
  $\bigcup_{uw \in S^{[-m,m-1]}} [A_{uw}, B_{uw}]$.

  For finite words, one-way infinite words and more generally patterns
  over any domain $D \subset \Z$, define the ordered applications of
  $\chi$ and $\xi$ (e.g. $\chi^{i+}$) with the same formulas as for
  $x \in \ST^\Z$, when they make sense.

  For each word $uw \in S^{[-m,m-1]}$, define the
  $\omega$-recognizable set $B_{uw} \subset (S^2)^\N$ containing those
  $(y, z)$ for which $\chi^{0+}(x) = z$ where $x \in \ST^\N$ satisfies
  $x_{[0,m-1]} = w$, $x_{[m,\infty)} = y_{[m,\infty)}$, and
  $\xi^{[-m+1,-1]^R}(uw)|_{[0,m-1]} = y_{[0,m-1]}$. One can easily
  construct a B\"uchi automaton recognizing this language, so $B_w$ is
  $\omega$-recognizable.

  Let then for $w \in S^m$ the set $A_w \subset (\ST^2)^{-\N}$ be
  defined as those pairs $(y, z)$ such that
  $\xi^{0-}(zw)|_{(-\infty, 0)} = y$, where
  $zw \in \ST^{(-\infty, m-1]}$. Again it is easy to construct a
  B\"uchi automaton for the reverse of $A_w$.

  Now it is straightforward to verify that the slider relation of
  $\chi$ is
  \[
    \bigcup_{uw \in S^{[-m, m-1]}} [A_{uw}, B_{uw}],
  \]
  which is recognizable by Lemma~\ref{lem:ZetaAutomatic} since the
  slider relation is always shift-invariant.
\end{proof}

\begin{lemma}
  \label{lem:FunctionDecidable}
  Given a recognizable set $X \subset (S^2)^\Z$, interpreted as a
  binary relation over $S^\Z$, it is decidable whether $X$ defines a
  function.
\end{lemma}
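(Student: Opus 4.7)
The plan is to reduce the functionality question to emptiness of a recognizable set over $(S^3)^\Z$. Specifically, $X$ fails to define a function if and only if there exist $y, z, z' \in S^\Z$ with $z \neq z'$ and $(y, z), (y, z') \in X$. I will construct a recognizable set $Y \subset (S^3)^\Z$ consisting exactly of such triples $(y, z, z')$, and then invoke decidability of emptiness for recognizable sets of bi-infinite words, which is standard (see \cite{infauto}).

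First I would build the set $X_1 = \{(y, z, z') \in (S^3)^\Z \mid (y, z) \in X\}$. This is obtained from the automaton for $X$ by replacing each transition label $(a, b) \in S^2$ with the three-letter labels $(a, b, c)$ for every $c \in S$; this is a routine product-alphabet construction that preserves recognizability. Analogously one constructs $X_2 = \{(y, z, z') \mid (y, z') \in X\}$. Next, the set $D = \{(y, z, z') \in (S^3)^\Z \mid z \neq z'\}$ is recognizable: a bi-infinite word lies in $D$ iff some position carries a letter $(a, b, c)$ with $b \neq c$, which is easily expressed by a finite-state automaton over bi-infinite words (a two-state automaton that guesses and verifies such a position, accepting in both the initial and final sense). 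Now $Y = X_1 \cap X_2 \cap D$ is recognizable by closure of recognizable bi-infinite languages under intersection.

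By construction, $X$ defines a function if and only if $Y = \emptyset$. Emptiness is decidable for finite-state automata over bi-infinite words: after the standard product construction, one checks whether the underlying labeled graph contains a bi-infinite path that visits the initial-state set infinitely often on the left and the final-state set infinitely often on the right, which reduces to a graph-theoretic search for a pair of cycles (one through an initial state, one through a final state) connected by a finite path. This yields the decision procedure.

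The only slightly delicate point is that the closure properties and emptiness procedure all need to be invoked in the bi-infinite setting of \cite{infauto} rather than for ordinary Büchi automata; however, all of these are stated explicitly there, and since the constructions above only use product with a fixed alphabet, intersection, and emptiness, no subtler operations (such as complementation) are required.
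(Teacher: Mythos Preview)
Your argument is correct and follows essentially the same strategy as the paper: reduce functionality to emptiness of a recognizable set built from $X$ by closure operations. The one noteworthy technical difference is that the paper first projects out the $y$-coordinate to obtain the fiber product $\{(z,z') : \exists y\,((y,z)\in X \wedge (y,z')\in X)\}\subset (S^2)^\Z$ and then intersects with the \emph{complement} of the diagonal, whereas you keep the full triple in $(S^3)^\Z$ and build the set $D=\{(y,z,z'):z\neq z'\}$ directly by a small automaton. Your route therefore uses only alphabet lifting and intersection, avoiding both projection and complementation; since complementation for automata on bi-infinite words is the heaviest of the closure properties invoked, your version is marginally more elementary, at the cost of working over a slightly larger alphabet.
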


\begin{proof}
  Since recognizable sets representing relations are closed under
  Cartesian products, projections and intersections (by standard
  constructions), if $X$ is recognizable also the `fiber product'
  $Y \subset (S^2)^\Z$ containing those pairs $(z,z')$ satisfying
  $\exists y: (y,z) \in X \wedge (y,z') \in X$ is recognizable. The
  diagonal $\Delta$ of $(S^2)^\Z$ containing all pairs of the form
  $(z,z)$ is also clearly recognizable.

  Since recognizable languages are closed under complementation
  \cite{infauto}, we obtain that $((S^2)^\Z \setminus \Delta) \cap Y$
  is recognizable. This set is empty if and only if $X$ is a function,
  proving decidability, since all proofs in this section are
  constructive and emptiness of a recognizable language is decidable
  using standard graph algorithms.
\end{proof}

The following is a direct corollary.

\begin{theorem}
  Given a block rule, it is decidable whether it is the sliding rule
  of a CA.
\end{theorem}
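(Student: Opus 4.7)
The plan is to unpack the definition of ``sliding rule of a CA'' and assemble the proof from the two tools just established: Proposition~\ref{prop:arule-relation-decidable} (which gives a computable recognizable representation of the slider relation) and Lemma~\ref{lem:FunctionDecidable} (which decides when a recognizable relation is a function). By Definition~\ref{def:arule}, a block rule $\chi$ is the sliding rule of a CA precisely when (i) $\chi$ is bijective and (ii) the slider relation $F \subset \ST^\Z \times \ST^\Z$ it defines is a function, in which case Corollary~\ref{cor:arule} automatically upgrades this function to a surjective CA.

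So the algorithm I would describe is: first, test bijectivity of $\chi : \ST^m \to \ST^m$, which is a finite check since the domain is finite; if $\chi$ is not bijective, the answer is ``no''. Second, assuming $\chi$ is bijective, invoke Proposition~\ref{prop:arule-relation-decidable} to produce a finite-state automaton recognizing $F \subset (\ST^2)^\Z$. Third, apply Lemma~\ref{lem:FunctionDecidable} to this recognizable $F$ to decide whether it defines a function. Output ``yes'' if and only if this last test succeeds.

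The correctness argument is immediate: if the test succeeds then $F$ is a function and, by Corollary~\ref{cor:arule}, a cellular automaton, so $\chi$ is its sliding rule; conversely, if $\chi$ is a sliding rule of some CA $f$, then by Definition~\ref{def:arule} $F$ is the function $f$ and both tests succeed. There is no real obstacle to overcome here; the only thing to be careful about is to note explicitly that bijectivity is decidable (trivially, as a finite check on a map between finite sets) so that this prerequisite of Proposition~\ref{prop:arule-relation-decidable} can be verified as part of the algorithm rather than assumed. All constructions are effective, matching the constructive convention declared earlier in the section.
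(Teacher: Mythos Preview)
Your proof is correct and follows the same approach as the paper, which simply states that the theorem is a direct corollary of Proposition~\ref{prop:arule-relation-decidable} and Lemma~\ref{lem:FunctionDecidable}. Your explicit mention of the bijectivity check is a helpful addition, since Proposition~\ref{prop:arule-relation-decidable} assumes a bijective block rule while the theorem is stated for arbitrary block rules.
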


We now discuss sweeping rules.

\begin{proposition}
  \label{prop:brule-relation-decidable}
  Let $\chi : \ST^m \to \ST^m$ be a block rule. Then the corresponding
  \emph{\brule\ relation} $F \subset (\ST^2)^\Z$ is recognizable.
\end{proposition}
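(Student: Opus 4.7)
Plan: I follow the strategy of Proposition~\ref{prop:arule-relation-decidable} and exhibit $F$ as a finite union of sets $[A,B]$ fitting Lemma~\ref{lem:ZetaAutomatic}, decomposed by the ``state at position $0$''. First I recast a single sweep as a Mealy computation: let $M_\chi$ be the Mealy automaton with state set $\ST^{m-1}$, transition $\delta(s,a) = \chi(sa)_{[1,m-1]}$ and output $\mu(s,a) = \chi(sa)_0$. Writing $S(p,i)$ for the Mealy state at position $p$ of the sweep started at position $i$, the right tail $\chi^{i+}(y)_{[i,\infty)}$ is exactly the output of $M_\chi$ started in state $y_{[i,i+m-2]}$ on input $y_{i+m-1}, y_{i+m}, \ldots$

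The crucial step is to characterize $(y,z) \in F$ as the existence of a bi-infinite Mealy run $(s_p)_{p \in \Z}$ satisfying $s_{p+1} = \delta(s_p, y_{p+m-1})$ and $\mu(s_p, y_{p+m-1}) = z_p$ for every $p$, together with the \emph{persistence condition} $s_p \in R^\infty(p)$ for all $p$, where
\[
R^\infty(p) := \{s \in \ST^{m-1} : S(p,i) = s \text{ for infinitely many } i \leq p\}.
\]
The forward direction is a diagonal argument: from a subsequence $i_k \to -\infty$ with $\chi^{i_k+}(y) \to z$, the finiteness of $\ST^{m-1}$ allows extracting a sub-subsequence on which $S(p, i_k)$ is eventually constant at each position $p$, yielding a bi-infinite Mealy run whose state is persistently hit in the past. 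The reverse direction uses the identity $R^\infty(p+1) = \{\delta(s, y_{p+m-1}) : s \in R^\infty(p)\}$, which lifts any persistent run to arbitrarily deep starting positions whose sweeps converge subsequentially to $z$.

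To realize this characterization by a finite automaton, I exploit that $R^\infty(p)$ lives in the finite set $\mathcal{P}(\ST^{m-1})$ and evolves deterministically under $\delta_\to(T,a) := \{\delta(t,a) : t \in T\}$, so it can be tracked in the automaton state alongside the full reachable set $R(p)$, which additionally absorbs the new sync state $y_{[p+1, p+m-1]}$ at each step. I decompose $F = \bigcup_{T, s_0, g}[A_{T,s_0,g}, B_{T,s_0,g}]$ over finitely many $T \in \mathcal{P}(\ST^{m-1})$, $s_0 \in T$, and boundary words $g \in \ST^{2m-1}$ fixing $y_{[-m+1, m-1]}$: the set $B_{T,s_0,g}$ is $\omega$-recognizable via deterministic Mealy simulation from $s_0$, while $A_{T,s_0,g}$ is recognized by a B\"uchi automaton on the reversed left half that nondeterministically guesses the backward Mealy run and enforces $T = R^\infty(0)$ through an acceptance condition capturing that the guessed subset coincides with the $\omega$-limit of $R$'s forward evolution from the infinite past. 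Lemma~\ref{lem:ZetaAutomatic} then yields the recognizability of $F$. The main obstacle is the faithful recognition of $R^\infty$: the naive sync condition ``$s_p = y_{[p,p+m-2]}$ for infinitely many $p \to -\infty$'' is strictly stronger than $F$-membership (for instance with $\ST=\{0,1\}$, $m=2$ and $\chi$ defined by $00\mapsto 01$, $01\mapsto 10$, $10\mapsto 11$, $11\mapsto 00$, one has $(0^\Z, 1^\Z) \in F$ yet the only bi-infinite Mealy run producing $1^\Z$ from $0^\Z$ stays in state $1$ and never syncs to $y$), so the power-set tracking of $R^\infty$ is essential.
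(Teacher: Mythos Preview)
Your Mealy-run characterization of the sweeper relation is correct and is a clean reformulation: both the forward diagonal extraction and the reverse lifting via $R^\infty(p+1)=\delta_\to(R^\infty(p),y_{p+m-1})$ go through, and your counterexample rightly shows that the naive synchronization condition ``$s_p=y_{[p,p+m-1)}$ infinitely often'' is strictly too strong. The overall $[A,B]$ decomposition and the treatment of $B_{T,s_0,g}$ are fine.

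The gap is in recognizing $A_{T,s_0,g}$. You let the automaton guess backward-consistent sequences $(T_p)$ and $(U_p)$ obeying the forward recursions for $R^\infty$ and $R$, but you never specify the B\"uchi acceptance condition, and the phrase ``the guessed subset coincides with the $\omega$-limit of $R$'s forward evolution from the infinite past'' does not determine one. The difficulty is that \emph{every} sequence $(U_p)$ satisfying the $R$-recursion automatically has $U_p\supseteq R(p)$, and nothing you wrote prevents strict over-approximation, which then leaks into $T_p$. Concretely, take $m=2$, $\ST=\{0,1\}$, and the (non-bijective) block rule $\chi(sa)=(s,s)$, so $\delta(s,a)=s$ and $\mu(s,a)=s$. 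For $y=0^\Z$ one has $R(p)=R^\infty(p)=\{0\}$ and $(0^\Z,1^\Z)\notin F$; yet the constant guesses $U_p=T_p=\{0,1\}$ satisfy both recursions, satisfy any ``$T$ is the $\omega$-limit of $U$'' style condition, and admit the run $s_p\equiv 1\in T_p$ producing $z=1^\Z$. Your automaton, under any reading of the stated acceptance condition, would accept this pair.

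What is missing is a mechanism that ties the guessed subsets to the actual input $y$ rather than to each other. One way to close the gap directly is to track instead the full function $f_p:\ST^{m-1}\to\ST^{m-1}$ giving the Mealy state at position $0$ from a state at position $p$; then $S(0,p)=f_p(y_{[p,p+m-1)})$ is computable in each automaton state, and ``$R^\infty(0)=T$'' becomes a Muller condition on which values recur infinitely often. That works but is a different construction from what you sketch. The paper bypasses the whole issue: it writes down an MSO formula over $\Z$ with quantifier prefix $\forall j\,\forall n\,\exists i\le n$ directly encoding subsequential convergence of $\chi^{i+}(y)$ to $z$, and then appeals to the equivalence of MSO-definability and recognizability for bi-infinite words. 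That black-box absorbs precisely the alternating-quantifier difficulty you are trying to handle by hand.
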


\begin{proof}
  One can easily construct a finite-state automaton accepting the
  language $X \subset (\{0,1\}^2 \times \ST^2)^\Z$ containing those
  $(x, x', y, z) \in (\{0,1\}^2 \times \ST^2)^\Z$ where
  \[
    \chi^{m+}(y)|_{[n,\infty)} = z|_{[n, \infty)}
  \]
  and $x_i = 1 \iff i = m$ and $x'_i = 1 \iff i = n$. Simply construct
  an automaton that checks that there is exactly one $1$-symbol on
  each of the first two tracks, and when it sees the first $1$ is seen
  on the first track it starts keeping in its state the current
  contents of the active window (where the block rule is being
  applied). When $1$ is seen on the second track, it also starts
  checking that the image is correct.

  Since $X$ is described by an automaton and $|\ST| \leq 2^k$ for some
  $k$, an adaptation of \cite[Theorem~IX.7.1]{infauto} shows that
  there exists a monadic second-order formula over the successor
  function of $\Z$, i.e. some formula $\phi \in \mbox{MF}_2(<)$, that
  defines those tuples sets of integers
  $(x, x', y_1, ..., y_k, z_1, ..., z_k)$ where $(y_1,...,y_k)$ codes
  some $y$ and $(z_1,...,z_k)$ some $z$ such that $(x, x', y, z)$ is
  in $X$.

  Since in tuple $(x, x', y_1, ..., y_k, z_1, ..., z_k)$ that
  satisfies $\phi$ we have $|x| = |x|' = 1$, it is standard to modify
  $\phi'$ into a formula where $x, x'$ are replaced by first-order
  variables $i, j$ and correspond to the unique places in $x$ and $x'$
  where the unique $1$ appears. Now the formula $\psi$ defined by
  \[
    \forall j \in \Z: \forall n \in \Z: \exists i \leq n: \phi'(i, j,
    y_1,...,y_k, z_1,...,z_k)
  \]
  defines those tuples $(y_1, ..., y_k, z_1, ..., z_k)$ that code
  pairs $(y, z)$ which are in the sweeper relation for $\chi$. Another
  application of \cite[Theorem~IX.7.1]{infauto} then shows that
  sweeper relation is recognizable.
\end{proof}

The sweeping relation need not be closed, as shown in
Example~\ref{ex:NotClosed}. However, whether it is closed is
decidable.

\begin{lemma}
  \label{lem:ClosedDecidable}
  Given a recognizable $X \subset \ST^\Z$, it is decidable whether $X$
  is closed.
\end{lemma}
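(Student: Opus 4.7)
The plan is to reduce closedness of $X$ to an equality between $X$ and its closure, both represented as recognizable subsets of $\ST^\Z$. By Lemma~\ref{lem:ZetaAutomatic}, every recognizable $X$ is automatically shift-invariant, so $\overline{X}$ is a subshift. Since a subshift is uniquely determined by its language of finite factors, I would argue that
\[
  \overline{X} = X_L := \{x \in \ST^\Z \mid \text{every finite factor of } x \text{ lies in } L\},
\]
where $L \subseteq \ST^*$ is the factor language of $X$. The inclusion $\overline{X} \subseteq X_L$ is immediate because taking limits does not introduce new finite factors. For the converse, given $x \in X_L$ and $n \in \N$ the factor $x_{[-n,n]}$ appears in some $y^{(n)} \in X$, and shift-invariance of $X$ lets me translate $y^{(n)}$ to obtain an element of $X$ agreeing with $x$ on $[-n,n]$; hence $x \in \overline{X}$.

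Next I would make $L$ and $X_L$ effective. Starting from the given finite-state automaton $A = (Q, \ST, E, I, F)$ for $X$, standard Büchi-style emptiness analysis produces the set $U \subseteq Q$ of states lying on some bi-infinite accepting path (states reachable via a left-infinite path visiting $I$ infinitely often, and co-reachable via a right-infinite path visiting $F$ infinitely often). Labels of finite paths between states of $U$ in $(Q,E)$ form an NFA for $L$. From an NFA for $L$ I then build a finite-state automaton on bi-infinite words recognizing $X_L$: using the standard sofic presentation, take the underlying graph of an automaton for $L$ and declare every (trim) state to be both initial and final, so that exactly the bi-infinite label sequences whose every finite factor is in $L$ are accepted.

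Finally, recognizable subsets of $\ST^\Z$ are effectively closed under Boolean operations and have decidable emptiness, as used already in Lemma~\ref{lem:FunctionDecidable}. Hence, having constructed a recognizer for $X_L = \overline{X}$, I can decide whether both $X \setminus X_L$ and $X_L \setminus X$ are empty; $X$ is closed if and only if both are.

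The only genuinely nonroutine step is the identification $\overline{X} = X_L$, which crucially uses shift-invariance from Lemma~\ref{lem:ZetaAutomatic}. The rest is bookkeeping between automata on finite words (presenting $L$) and on bi-infinite words (presenting $X_L$), together with standard closure properties that are already invoked elsewhere in the section.
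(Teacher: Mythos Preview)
Your approach is correct and essentially the same as the paper's: trim the given automaton to the states lying on some accepting bi-infinite path, make all remaining states initial and final to obtain a recognizer for $\overline{X}$, and then test equality with $X$. The paper states this in two terse sentences without spelling out the identification $\overline{X} = X_L$ via the factor language, but the underlying construction and the final equality check are identical to yours.
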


\begin{proof}
  Take an automaton recognizing $X$, remove alls states from which an
  initial state is not reachable to the left, and ones from which a
  final state is not reachable to the right. Turn all states into
  initial and final states. Now $X$ is closed if and only if the new
  automaton recognizes $X$, which is decidable by standard arguments.
\end{proof}

\begin{theorem}
  \label{thm:block-decidable-whether-sweeping}
  Given a block rule, it is decidable whether the sweeping relation it
  defines is a CA.
\end{theorem}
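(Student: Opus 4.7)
The plan is to assemble the theorem from the three preceding technical facts: Proposition~\ref{prop:brule-relation-decidable} gives that the sweeper relation $F\subset (\ST^2)^\Z$ is (effectively) recognizable; Lemma~\ref{lem:FunctionDecidable} decides whether $F$ is a function; and Lemma~\ref{lem:ClosedDecidable} decides whether $F$, viewed as a subset of $(\ST^2)^\Z$, is closed. What remains is to argue that being a CA corresponds exactly to the conjunction of these two properties (plus shift-invariance, which will be free).

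First I would observe that the sweeper relation defined by $\chi$ is always shift-invariant: if $(y,z)\in F$ because some subsequence of $\chi^{0+}(y), \chi^{-1+}(y),\dots$ converges to $z$, then the same subsequence, reindexed by one, witnesses $(\sigma y,\sigma z)\in F$. So shift-invariance is automatic and need not be tested.

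Next I would invoke Curtis--Hedlund--Lyndon: a function $f:\ST^\Z\to \ST^\Z$ is a cellular automaton iff it is continuous and shift-commuting. Since $\ST^\Z$ is compact Hausdorff, a function $f$ is continuous iff its graph is closed in $\ST^\Z\times \ST^\Z$. Consequently $F$ is (the graph of) a CA iff
\begin{enumerate}
\item[(i)] $F$ is a function $\ST^\Z\to \ST^\Z$, and
\item[(ii)] $F$ is a closed subset of $(\ST^2)^\Z$
\end{enumerate}
(shift-invariance being free). Both conditions are decidable on recognizable relations: (i) by Lemma~\ref{lem:FunctionDecidable}, and (ii) by Lemma~\ref{lem:ClosedDecidable} applied to $F\subset (\ST^2)^\Z$ (here one uses that recognizable sets are closed under finite Boolean operations so that Lemma~\ref{lem:ClosedDecidable} applies verbatim to a subset of $(\ST^2)^\Z \cong (\ST')^\Z$ for $\ST' = \ST^2$).

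The algorithm is then: given $\chi$, construct a finite-state automaton for $F$ via Proposition~\ref{prop:brule-relation-decidable}; run the functionality test of Lemma~\ref{lem:FunctionDecidable}; if $F$ is a function, run the closedness test of Lemma~\ref{lem:ClosedDecidable}; accept iff both succeed. There is no real obstacle: the only content beyond citing the lemmas is the remark that, by Curtis--Hedlund--Lyndon together with compactness of $\ST^\Z$, a shift-invariant functional relation is a CA exactly when its graph is closed, and Example~\ref{ex:NotClosed} shows this is a genuinely nontrivial extra test since sweepers can yield non-continuous shift-commuting functions.
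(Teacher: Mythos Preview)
Your proposal is correct and follows essentially the same approach as the paper: reduce to checking that the (recognizable) sweeper relation is both a function and closed, via Proposition~\ref{prop:brule-relation-decidable}, Lemma~\ref{lem:FunctionDecidable}, and Lemma~\ref{lem:ClosedDecidable}. Your write-up is in fact more detailed than the paper's two-sentence proof, since you spell out the Curtis--Hedlund--Lyndon / closed-graph justification for the equivalence ``CA $\Leftrightarrow$ closed $+$ function''; the only small addition worth making explicit is that totality of $F$ (every $y$ has at least one image) is automatic by compactness, as recorded in Lemma~\ref{lem:SweepingBasics}.
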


\begin{proof}
  The sweeping rule of a block rule defines a CA if and only if the
  sweeping relation is closed and defines a function. These are
  decidable by Lemma~\ref{lem:FunctionDecidable} and
  Lemma~\ref{lem:ClosedDecidable}, respectively.
\end{proof}

\section{Future work and open problems}

To obtain a practical computer implementation method for cellular
automata, one would need much more work. The radius of $\chi$ should
be given precise bounds, and we would also need bounds on how long it
takes until the sweep starts producing correct values. Future work
will involve clarifying the connection between the radii $m$ of local
rules $\chi : S^m \to S^m$ and the strong left-closing radii, the
study of non-bijective local rules, and the study of sweeping rules on
periodic configurations.

On the side of theory, it was shown in Section~\ref{sec:rev-ca} that
the hierarchy of left- and right-closing cellular automata corresponds
to the hierarchy of sweeps starting from the second level. Neither
hierarchy collapses on the first level, since there exists CA which
are left-closing but not right-closing, from which one also obtains CA
which are in $\mathcal{L}_1$ but not $\mathcal{R}_1$.

\begin{question}
  Does the hierarchy collapse on a finite level? Is every surjective
  CA in this hierarchy?
\end{question}

As we do not know which cellular automata appear on which levels, we
do not know whether these levels are decidable. For example we do not
know whether it is decidable if a given CA is the composition of a
left sweep and a right sweep.

It seems likely that the theory of sliders can be extended to shifts
of finite type. If $X$ is a subshift, say that a homeomorphism
$\chi : X \to X$ is \emph{local} if its application modifies only a
(uniformly) bounded set of coordinates. One can define sliding
applications of such homeomorphisms exactly as in the case of
$\ST^\Z$.

\begin{question}
  Let $X \subset \ST^\Z$ be a transitive subshift of finite type.
  Which endomorphisms of $X$ are defined by a sliding rule defined by
  a local homeomorphism?
\end{question}

In \cite{blockCA}, block representations are obtained for cellular
automata in one and two dimensions, by considering the set of stairs
of reversible cellular automata. Since stairs play a fundamental role
for sliders as well, it seems natural to attempt to generalize our
theory to higher dimensions.

\par

\noindent
\textbf{Acknowledgement.} The authors gratefully acknowledge partial
support for this work by two short term scientific missions of the EU
COST Action IC1405.

\bibliographystyle{plain}
\bibliography{bib}

\end{document}